\documentclass[a4paper]{article}
\usepackage{amsfonts,amsmath,amssymb,amsthm}
\usepackage{color} 
\usepackage{graphicx}

\baselineskip=12pt   

\textwidth=      15.5  cm
\textheight=     21.6  cm
\oddsidemargin=   0    cm
\parindent=       0    cm  
\addtolength{\topmargin}{-.75cm}

\numberwithin{equation}{section} 

%

\newtheorem{lem}{Lemma}[section]
\newtheorem{theo}[lem]{Theorem}
\newtheorem{cor}[lem]{Corollary}
\newtheorem{prop}[lem]{Proposition}
\newtheorem{rk}[lem]{Remark}

\newtheorem{hyp}[lem]{Hypothesis}
 \newcommand{\n}{n}

\newcommand{\hausmoins}{\mathcal{H}^{\n-1}}
\newcommand{\Sn}{\bS^{\n-1}}
\newcommand{\bS}{\mathbb S}

\def\Om{\Omega}

\def\R{\mathbb{R}}

\def\N{\mathbb{N}}

\def\F{\mathcal{F}}

\def\LM#1{\hbox{\vrule width.2pt \vbox to#1pt{\vfill \hrule width#1pt
height.2pt}}}
\def\LL{{\mathchoice {\>\LM7\>}{\>\LM7\>}{\,\LM5\,}{\,\LM{3.35}\,}}}
 \newcommand{\restr}{{\LL}}
\renewcommand{\phi}{\varphi}
\def\p{\phi}

\def\car#1{\1_{#1}}
\def\dist{\textup{dist}}
\def\ds{\displaystyle}
\def\ov{\overline}
\def\H{\mathcal{H}}
\def\HH{\mathcal{H}^{n-1}}
\def\1{\mathbf{1}}
\def\loc{\mathrm{loc}}

\def\XXint#1#2#3{{\setbox0=\hbox{$#1{#2#3}{\int}$ }
\vcenter{\hbox{$#2#3$ }}\kern-.57\wd0}}

\def\eps{\varepsilon}
\newcommand{\sprod}[2]{\langle #1, #2 \rangle}

\def\Anx{A_{\nu x}}

\def\sigbrb{\ov{\sigma}_{\beta}}
\DeclareMathOperator*{\argmin}{\arg\!\min}
\def\Wr{W}
\def\Eund{\mathcal{G}_{\beta}}
\def\lt{\left}
\def\rt{\right}
\def\car#1{\textbf{1}_{#1}}
 
 
   \bibliographystyle{plain}
 \parindent=0pt
  \usepackage{pdfsync}
   \usepackage[pdftex]{hyperref}
  \hypersetup{colorlinks=true,citecolor=blue, filecolor=red, linkcolor=blue, urlcolor=blue}

\def\Feb{\F_{\eps,\beta}}
\def\Fb{\F_{\beta}} 
\def\be{\begin{equation}}
\def\ee{\end{equation}}
\def\ben{\begin{equation*}}
\def\een{\end{equation*}}
\def\ba{\begin{eqnarray}}
\def\ea{\end{eqnarray}}
\def\ban{\begin{eqnarray*}}
\def\ean{\end{eqnarray*}}
\def\D{\mathcal{D}}
\def\sigb{\sigma_{\beta}}
\def\1{\mathbf{1}}


\begin{document}

\title{Sharp interface limit for two components Bose-Einstein condensates}
\author{
        M. Goldman
        \footnote{Max Planck Institute for Mathematics in the Sciences,  Inselstrasse 22, 04103, Leipzig, Germany, email: goldman@mis.mpg.de, funded by a Von Humboldt PostDoc fellowship}
        \and J. Royo-Letelier
        \footnote{Institute of Science and Technology Austria (IST Austria), Am Campus 1, 3400 Klosterneuburg, Austria, email:jimena.royo-letelier@ist.ac.at}
}
\date{}

\maketitle

\begin{abstract}
We study a double Cahn-Hilliard type functional related to the Gross-Pitaevskii energy of two-components Bose-Einstein condensates.
In the case of large but same order intercomponent and intracomponent coupling strengths, we prove $\Gamma$-convergence to a perimeter minimisation functional with
 an inhomogeneous surface tension. We study the asymptotic behavior of the surface tension as the ratio between the intercomponent and intracomponent 
coupling strengths becomes very small or very large and obtain good agreement with the physical literature. We obtain as a consequence, symmetry breaking of the minimisers for the harmonic potential. 
\end{abstract}

 \section{Introduction}
For $V$  a given   trapping potential (see Hypothesis \ref{hypo} below for more precise requirement) and a fixed constant $\eps>0$ let $\eta_\eps$ 
 be the (unique) positive minimiser  of the Gross-Pitaevskii functional

\be \label{defGP1}
	E_\eps(\eta): = \frac{1}{2} \int_{\R^n} |\nabla \eta|^2 + {\frac{1}{\eps^2}} V   |\eta|^2 + \frac1{2\eps^2}   |\eta|^4  \, dx\,, 
\ee 
under the constraint $ \|\eta \|_2=1$, where $\|\eta\|_2$ denotes the $L^2(\R^n)$ norm of $\eta$. We then consider for $\beta$, $\alpha_1$ and $\alpha_2$ positive constants, with $\alpha_1+\alpha_2=1$, the double Cahn-Hilliard type functional    

\be \label{defFeb}
	\Feb(v,\p) :=    \frac{1}{2}\int_{\R^n} \eta_\eps^2 |\nabla v|^2 +  \frac1{2\eps^2} \eta_\eps^4 (1-v^2)^2 +  \frac14   \eta_\eps^2 v^2  \, |\nabla \p|^2 + \frac1{4 \eps^2} \beta  \eta_\eps^4 v^4 \,  \sin^2 \p  \, dx    \,,
\ee 
 under the mass constraints 
\be \label{massvphi}
	\int_{\R^n}   \eta_\eps^2 v^2\, dx = \alpha_1 + \alpha_2 =1 \qquad \text{ and }  \qquad \int_{\R^n}  \eta_\eps^2 v^2 \cos \p \, dx= \alpha_1 - \alpha_2 \,,
\ee 
 and study its behavior when the parameter $\eps$ tends to zero. \\

This functional arises in the description of two-components Bose-Einstein condensates with equal intracomponent coupling strengths (see Section \ref{Sec:derenergy}).
 The parameter $\frac{1}{\eps^2}$ represents the intracomponent coupling strength whereas  $1+\beta$ is  the ratio between the intercomponent and intracomponent coupling strengths. \\

The Gross-Pitaevskii functional \eqref{defGP1}, which describes the energy of a single component condensate with density $|\eta_\eps|^2$, has been extensively studied in the literature  \cite{AfLivre,AftJR,IgnMil,KarSour}.  
As $\eps$ goes to zero, $\eta_\eps$ converges to the Thomas-Fermi profile $\sqrt \rho$, given by

\be \label{defrhointro}
	\rho(x) := ( \lambda^2-V(x))_+ \, 
\ee

with $\lambda$ determined by the constraint $\ds \int_{\R^n}\rho \, dx=1$.  The support of $\rho$ is a domain denoted by $\D$ and corresponds to the region where the density of the single component condensate does not vanish as $\eps \to 0$.\\

The main result of the paper is the $\Gamma$-convergence \cite{braidesbook,braides} of  $\eps \Feb$ to  a perimeter minimisation problem with an inhomogeneous surface tension $\sigb$, defined in $\D$ by $\sigb(x):=\rho(x)^{3/2}\sigbrb$ with 

\begin{equation} \label{defsurftenintro}
 	\sigbrb:=\inf\left\{ \frac{1}{2} \int_{-\infty}^{+\infty}   v'^2 + \frac{1}{2} \left(1-v^2\right)^2+\frac{1}{4} v^2 \p'^2 +\frac{\beta}{4} v^4  \sin^2 \p \, dt \,  : \right. 
  \left.  \lim_{-\infty} \p= 0 \textrm{ and } \, \lim_{+\infty} \p =\pi\right\}   \,,
\end{equation}
where in the infimum, the function $v$ (respectively $\p$) denotes a function from $\R$ to $[0,1]$ (respectively from $\R$ to $[0,\pi]$).

\begin{theo} \textbf{ \textnormal ($\Gamma$-convergence)} \label{gammavctheo}
	Let $\beta>0$ be fixed. Under the Hypothesis \ref{hypo}, the $\Gamma$-limit in $L^1_{loc}(\D) \times L^1_{loc}(\D) $ as $\eps \to 0$ of $\eps \Feb$ with mass constraint \eqref{massvphi} is given by the functional $\Fb$ defined as 
	
\be \label{defFbeta}
	\Fb(v,\p) :=  \left\{   \begin{array}{cl}
	\ds \int_{\D} \frac{\sigb}{\pi}  |D\p|  & \text{ if $v=1$ a.e. in $\D$ and $\p\in BV_{loc}(\D;\{0,\pi\})$ }    \\ \\ 
	+\infty & \text{ otherwise}     \,,
	\end{array}\right. 
\ee
with mass constraint 
\be \label{mcfinal}
  \int_{\R^n} \rho \cos \p \, dx= \alpha_1-\alpha_2 \,.
\ee
  \end{theo}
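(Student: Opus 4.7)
The plan is to carry out the classical Modica--Mortola $\Gamma$-convergence proof (compactness, liminf, recovery sequence) adapted to the coupled pair $(v,\p)$ and to the weight $\eta_\eps^2\to\rho$. The weight enters the gradient terms as $\rho$ and the potentials as $\rho^2$; combined with the usual Modica rescaling, this will produce the $\rho^{3/2}$ prefactor in $\sigb$. For compactness, if $\eps\Feb(v_\eps,\p_\eps)\le C$ then $\tfrac{1}{\eps}\eta_\eps^4(1-v_\eps^2)^2$ and $\tfrac{1}{\eps}\eta_\eps^4 v_\eps^4\sin^2\p_\eps$ are $L^1$-bounded, and since $\eta_\eps^2\to\rho$ locally uniformly on $\D$, this forces $v_\eps\to 1$ and $\sin\p_\eps\to 0$ in $L^1_\loc(\D)$. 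To upgrade $\p_\eps$ to a $BV_\loc$ limit, I would apply the Modica trick to the primitive $\Psi(s):=\int_0^s|\sin t|\,dt$: from $2ab\le a^2+b^2$ applied to the gradient and potential terms involving $\p$, one obtains a uniform bound on $\int_K \rho\,|\nabla\Psi(\p_\eps)|\,dx$ on every compact $K\subset\D$. Since $\rho$ is bounded below on $K$, a subsequence converges in $L^1_\loc(\D)$ to some $\p\in BV_\loc(\D;\{0,\pi\})$.

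\textbf{Liminf inequality.} I would localize and perform a blow-up at $\HH$-a.e.\ point $x_0\in J_\p$. Setting $\tilde v_\eps(y):=v_\eps(x_0+\eps y)$, $\tilde\p_\eps(y):=\p_\eps(x_0+\eps y)$ and using $\eta_\eps^2(x_0+\eps y)\to\rho(x_0)$, a slicing argument in the normal direction $\nu(x_0)$ to $J_\p$ reduces the problem to minimizing, over 1D transitions from $\p=0$ to $\p=\pi$, the functional
\[
\int \tfrac{\rho(x_0)}{2}(v')^2+\tfrac{\rho(x_0)^2}{2}(1-v^2)^2+\tfrac{\rho(x_0)}{4}v^2(\p')^2+\tfrac{\beta\,\rho(x_0)^2}{4}v^4\sin^2\p\,dt.
\]
The rescaling $t\mapsto t/\sqrt{\rho(x_0)}$ converts this into $\rho(x_0)^{3/2}\sigbrb=\sigb(x_0)$, and a Fonseca--M\"uller type integral representation yields
\[
\liminf_{\eps\to 0}\eps\Feb(v_\eps,\p_\eps)\ \ge\ \int_{J_\p}\sigb\,d\HH\ =\ \int_\D \frac{\sigb}{\pi}\,|D\p|.
\]

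\textbf{Recovery sequence and mass constraints.} I would first approximate a given $\p\in BV_\loc(\D;\{0,\pi\})$ by functions $\p^{(k)}$ whose jump set $\Sigma^{(k)}$ is smooth and compactly contained in $\D$, via the standard strict $BV$ approximation of sets of finite perimeter, and close the construction by diagonal extraction. For each such $\p^{(k)}$, let $d_\Sigma$ be the signed distance to $\Sigma^{(k)}$ and let $(v^*,\p^*)$ be a near-minimizer of \eqref{defsurftenintro} truncated to be constant outside a compact interval. Define
\[
v_\eps(x):=v^*\!\bigl(\sqrt{\rho(x)}\,d_\Sigma(x)/\eps\bigr),\qquad \p_\eps(x):=\p^*\!\bigl(\sqrt{\rho(x)}\,d_\Sigma(x)/\eps\bigr).
\]
Tubular coordinates around $\Sigma^{(k)}$ together with the coarea formula yield $\limsup \eps\Feb(v_\eps,\p_\eps)\le \int_{\Sigma^{(k)}}\sigb\,d\HH$, matching the liminf. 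The first mass constraint in \eqref{massvphi} is automatic, since $\eta_\eps^2 v_\eps^2\to\rho$ in $L^1$ and $\int\rho=1$. For the second one, a normal translation of $\Sigma^{(k)}$ by $O(\eps)$ changes $\int\eta_\eps^2 v_\eps^2\cos\p_\eps$ continuously, and since the target value $\alpha_1-\alpha_2$ is attained in the limit by $\p^{(k)}$ (admissibility \eqref{mcfinal}), an intermediate value argument restores the constraint without affecting the energy limit.

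\textbf{Main obstacle.} The hardest point is the degeneracy of $\eta_\eps$ near $\partial\D$. In the liminf, blow-ups at interface points close to $\partial\D$ and the integrability of $\rho^{3/2}$ near $\{\rho=0\}$ must be controlled uniformly; in the recovery step, the construction and the mass correction must remain away from $\partial\D$ without losing sharpness. A secondary technical difficulty is the genuine coupling of $v$ and $\p$ in the cell problem \eqref{defsurftenintro}: both the slicing in the liminf and the 1D profile in the recovery sequence must be handled jointly rather than via a decoupled scalar Modica--Mortola reduction, which forces one to work with the full joint blow-up described above.
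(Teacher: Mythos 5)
Your overall architecture (compactness, slicing/blow-up liminf with the rescaling producing $\rho^{3/2}\sigbrb$, recovery sequence built from a near-optimal 1D profile composed with the signed distance, intermediate-value correction of the mass constraint) is the same as the paper's, but there is a genuine gap in the compactness step. You claim that $2ab\le a^2+b^2$ applied to the two $\p$-terms yields a uniform bound on $\int_K\rho\,|\nabla\Psi(\p_\eps)|$ with $\Psi(s)=\int_0^s|\sin t|\,dt$. What the inequality actually gives is
\[
\frac14\,\eta_\eps^2v_\eps^2|\nabla\p_\eps|^2+\frac{\beta}{4\eps^2}\,\eta_\eps^4v_\eps^4\sin^2\p_\eps\ \ge\ \frac{\sqrt\beta}{2\eps}\,\eta_\eps^3\,v_\eps^3\,|\nabla\p_\eps|\,|\sin\p_\eps|,
\]
i.e.\ a bound on $\int_K v_\eps^3\,|\nabla\Psi(\p_\eps)|\,dx$. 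The factor $v_\eps^3$ cannot be discarded: you only know $v_\eps\to1$ in $L^2(K)$, not a pointwise lower bound, so $|\nabla\Psi(\p_\eps)|$ may concentrate on the small set where $v_\eps$ is close to $0$ and no $BV$ bound on $\Psi(\p_\eps)$ follows. This is precisely the coupling issue you flag at the end, yet your compactness argument is a decoupled one. The repair (the paper's route) is to run the Modica trick on a joint quantity: combining also the $v$-terms one controls $\int_K |\nabla v_\eps|\,|1-v_\eps^2|+v_\eps^3|\nabla\p_\eps|\sin\p_\eps\ \ge\ c\int_K|\nabla\psi(v_\eps,\p_\eps)|$ with $\psi(v,\p)=(1-\cos\p)\,v^3(4/3-v)$, which is bounded in $BV(K)$; since $v_\eps\to1$ and $\sin^2\p_\eps\to0$, one then recovers $\p\in BV(K;\{0,\pi\})$.

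Two further quantitative slips occur in the mass constraints. The first constraint in \eqref{massvphi} is not ``automatic'': it must hold exactly for every $\eps$, and your transition layer perturbs it by $O(\eps)$, so $v_\eps$ must be renormalised by $\|\eta_\eps v_\eps\|_2^{-1}$. More seriously, the error in $\int\eta_\eps^2 v_\eps^2\cos\p_\eps$ coming from Hypothesis~\ref{hypo} is of order $\eps^\tau$ with $\tau=\min\{a,b,c\}$, which may be much smaller than $1$; a normal translation of the interface by $O(\eps)$ then changes the mass by $O(\eps)\ll\eps^\tau$ and cannot reach the target value, so your intermediate value argument fails as stated. One needs a perturbation whose mass effect dominates $\eps^\tau$ while its perimeter effect still vanishes --- e.g.\ a translation by $\eps^\gamma$ with $\gamma<\tau$, or, as in the paper, adding and removing balls of volume $\eps^\gamma$ centred at interior points of $A$ and of $\D\setminus\overline A$. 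The remaining differences from the paper are legitimate alternatives: the paper proves the liminf by slicing plus the localisation lemma for suprema of measures where you invoke a Fonseca--M\"uller blow-up, and its recovery sequence freezes $\rho$ at finitely many points of $\partial A$ via a partition of unity where you let $\sqrt{\rho(x)}$ vary inside the profile (which only adds harmless lower-order gradient terms).
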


Since $\Fb$ is finite only for $v=1$, we will denote by $\Fb(\p):=\Fb(1,\p)$.  It is worth noticing that since
 $\Fb(\p)=\ds \frac{\sigbrb}{\pi} \int_{\D} \rho^{3/2}  |D\p|$, the minimizers of $\Fb$ do not depend on $\beta$. This fact, which is quite peculiar to BEC interfaces, was already well known in the physics literature (see \cite{BVS}). The functional $\Feb$ shares at the same time some features with the celebrated Ambrosio-Tortorelli functional
 which is approximating the  Mumford-Shah functional (see \cite{AmbTort,AmbFusPa}), and some other with functionals appearing in the study of phase transitions such as the Modica-Mortola energy \cite{modica} 
(also known as Cahn-Hilliard or Allen-Cahn functional) or more general weighted  functionals \cite{Bou} (see also \cite{braides,braidesbook}). Indeed, $\Feb$ consists of the sum of two singularly perturbed, weighted double-well potentials which are coupled together.
 As in \cite{AmbTort,braides,AftRoyo,Bou} our proof is based on the slicing method described in Section \ref{sub:slicing}.\\

In experiments realised with two-components Bose-Einstein condensates \cite{McCarronETall,HaJILA,PaJILA}, the segregation of the components is observed for large values of the intercomponent 
coupling strengths. This has also been supported by numerical simulations in respectively, one (\cite{KasYaTsu}), two (\cite{KaTsuUe,MaAf}) and three (\cite{OhSten})  space dimensions.
In our setting, at the level of $\F_{\eps,\beta}$ this means that for large values of $\beta$, $\phi$ takes approximately only values $0$ and $\pi$ while $v$ is almost everywhere close to one. 
 Moreover, for the harmonic potential $V=|x|^2$ in dimension $n=2$ \cite{McCarronETall,MaAf}, one also observes a symmetry breaking in the sense that while  $V$ is radially symmetric, the support of each component (which correspond respectively to $ A :=\{\p=\pi\}$  and $\D \setminus A =\{\p=0\} $) are not.
 The  numerical simulations also show that near $\partial A$, the function $v$ is close to a small positive constant.  For $\beta<0$ the two components do not segregate and their densities are both proportional to $\rho$.  \\

We mention that segregation of two-components condensates has been widely studied for bounded intracomponent coupling strengths and large intercomponent 
coupling strength. In \cite{JRL} segregation and symmetry breaking is proven in $\R^2$ for small intracomponent coupling strengths. In \cite{WeWe1}, working on a bounded domain of $\R^2$ and taking the trapping potential $V$ to be zero, 
the authors show segregation and local uniform convergence of the two components. In \cite{conterrver1,NoTaTeVe} the 
regularity of $\partial A$ is studied for the same model. The profile of the components near $\partial A$ is analysed  in \cite{BereLinWeiZhao,BereTeraWaWei}. \\

In \cite{AftRoyo} the functional $\Feb$ is studied for $n=2$ when $\beta$ goes to $+\infty$ as $\eps$ tends to zero.
 The authors also prove $\Gamma$-convergence
 to a perimeter minimisation problem with an inhomogeneous surface tension. The main difference with our setting is that for $\beta\to +\infty$, the limiting energy is 
given by the first two terms of $\eps \Feb$ while the  last two terms go to zero as $\eps \to 0$. This leads to some decoupling of the energy which allows to compute explicitly the limiting surface tension.
%
%
%
%
In our case, all the terms in the energy $\eps \Feb$ are of the same order so that the surface tension is given by the one dimensional optimal transition problem  (\ref{defsurftenintro}). 
Thus,  we need to precisely analyse the behavior of $\sigbrb$ and of the associated optimal profile. We prove existence and qualitative properties of 
minimisers of $\sigb$, an equipartition of the energy and compare our results with the physical literature \cite{BVS,barankov,Aochui,Timmermans,Mazets}. In particular, we prove that minimisers $(v,\p)$ of  $\sigbrb$ satisfy $\inf v= m(\beta)>0$,  
 as was expected from numerical simulations. We remark that we are unable to prove uniqueness of the optimal profile. 
We study the asymptotic behavior of  $\sigbrb$ when $\beta$ tends to zero or infinity.
 On the one hand, we prove that when $\beta \to +\infty$, we recover the functional derived in \cite{AftRoyo}.  We show that in this regime, $\sigbrb \simeq \beta^{-1/4}$ as predicted by formal asymptotic expansions \cite{BVS}.
 This estimate follows from the fact that $m(\beta)\sim \beta^{-1/4}$ (see Proposition \ref{decayinf}). This fact is related to some open questions raised in \cite{BereLinWeiZhao} (see also the discussion in \cite{AftRoyo}). 
On the other hand, we show that as expected from \cite{Aochui,Timmermans,Mazets,barankov}, $\sigbrb\simeq \sqrt{\beta}$ when $\beta$ goes to zero. The fact that $\sigbrb$ vanishes in this limit, reflects the non segregation of the two components.
Finally, in Proposition  \ref{brissym}, we extend the symmetry breaking result for minimizers of $\mathcal{F}_\infty$ (for the harmonic potential $V=|x|^2$) 
obtained in \cite{AftRoyo} to space dimensions $n=1$ and $n=3$. We notice that since the minimizers of $\Fb$ coincide with the minimizers of $\mathcal{F}_\infty$, this symmetry breaking result 
 extends to any $\beta>0$ and by $\Gamma-$convergence to minimizers of the original functional $\Feb$ for $\eps$ small enough. \\

The paper is organised as follows:   in Section \ref{Sec:notation} we recall the definition and main properties of functions of bounded variation and the slicing method. In Section \ref{Sec:derenergy}, we explain how the functional $\Feb$ arises from the coupled Gross-Pitaevskii energy of 
a two-components Bose-Einstein condensate. In Section \ref{Sec:surftens} we study the variational problem $\eqref{defFbeta}$   and
 $\beta>0$, and prove existence and qualitative properties of minimisers. In Section \ref{Sec:gammacv}, we prove our main  $\Gamma$-convergence theorem. 
Finally, in Section \ref{Sec:surfasympt} we analyse the asymptotic behavior of $\sigb$ when $\beta$ tends to zero or infinity and prove as a consequence symmetry breaking of the minimisers.

\section{Notation} \label{Sec:notation}
For $x\in \R^n$ and $r>0$, we denote by $B_r(x)$ the ball of radius $r$ centered at $x$ and simply write $B_r$ when $x=0$. We let $\Sn$ be the unit sphere in $\R^n$ and for $k\in[0;n]$, we denote by $\H^k$ the $k-$dimensional Hausdorff measure.
Given a set $E\subset \R^n$, we let $\1_E$ be the characteristic function of the set $E$. The letters, $c, C$ denote universal constants which can vary from line to line. We also make use of the usual $o$ and $O$ notation. For $a$ and $b$ real numbers we let $a\wedge b:=\min(a,b)$ and $a\vee b:= \max(a,b)$.
 Throughout the paper, with a small abuse of language, we call sequence a family $(u_\eps)$ of functions 
labeled by a continuous parameter $\eps\in (0,1]$. A subsequence 
of $(u_\eps)$ is any sequence $(u_{\eps_k})$ such that $\eps_k
\to 0$ as $k \to +\infty$. We mention that  $\rho$ will denote a positive constants in Sections \ref{Sec:surftens} and \ref{Sec:surfasympt},  while in the rest of the paper it will be the function given in (\ref{defrhointro}).
\\

\subsection{$BV(\Omega)$  functions}
For $\Om$ an open set of $\R^n$, let $BV(\Om)$ be the space of functions 
$u \in L^1(\Om)$ 
 having as distributional derivative $Du$
a measure with finite total variation. For $u\in BV(\Om)$, we denote by $S_u$ the complement of the Lebesgue set of $u$. That is, $x \notin S_u$ if and only if 
$\lim_{r \to 0^+}\displaystyle \frac{1}{|B_r|} \int_{B_r(x)} |u(y)-z| \ dy =0$
for some $z \in \R$.
 We say that $x$ is an approximate jump point of $u$ 
if there exist  $\nu \in \Sn$ 
and distinct $a, b \in \R$
 such that
\[ \lim_{r \to 0} \frac{1}{|B_r^+(x,\nu)|} \int_{B_r^+(x,\nu)} |u(y)-a| \ dy =0 \quad \textrm{ and } \quad  \lim_{r \to 0} \frac{1}{|B_r^-(x,\nu)|} \int_{B_r^-(x,\xi)} |u(y)-b| \ dy =0,\]
where $B_r^\pm(x,\nu):= \{ y \in B_r(x) : \pm \sprod{y-x}{\nu}>0 \}.$ Up to a permutation of $a$ and $b$ and a change of sign of $\nu$, this characterizes the triplet $(a,b,\nu)$ which is then denoted by $(u^+,u^-, \nu_u)$. The set of approximated jump points is denoted by $J_u$. 
The following  theorem holds
\cite{AmbFusPa}.
\begin{theo}
 The set $S_u$ is countably $\hausmoins$-rectifiable and $\hausmoins(S_u\backslash J_u)=0$. Moreover $Du \restr J_u=(u^+-u^-) \nu_u \hausmoins \restr J_u$.
\end{theo}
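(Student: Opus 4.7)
My plan is to combine three classical ingredients: the coarea formula for $BV$ functions, De Giorgi's structure theorem for sets of finite perimeter, and a blow-up/slicing argument that identifies approximate jump points with points on reduced boundaries of level sets.

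First, I would apply the coarea formula $|Du|(\Omega) = \int_{\mathbb{R}} \mathrm{Per}(\{u > t\}, \Omega)\, dt$ to reduce the study of $Du$ to the reduced boundaries $\partial^* E_t$ of the level sets $E_t := \{u > t\}$, which have finite perimeter for almost every $t$. By De Giorgi's structure theorem, each $\partial^* E_t$ is countably $\mathcal{H}^{n-1}$-rectifiable with an inner unit normal $\nu_{E_t}$ defined $\mathcal{H}^{n-1}$-almost everywhere, and $D\mathbf{1}_{E_t} = \nu_{E_t}\, \mathcal{H}^{n-1} \restr \partial^* E_t$. Picking a countable dense set of levels $\{t_k\}$, I would then show that $\mathcal{H}^{n-1}$-almost every $x \in S_u$ belongs to $\partial^* E_{t_k}$ for those indices $k$ with $t_k$ lying strictly between the approximate lower and upper limits $u^-(x)$ and $u^+(x)$. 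This exhibits $S_u$, up to an $\mathcal{H}^{n-1}$-negligible set, as a countable union of rectifiable pieces, yielding countable $\mathcal{H}^{n-1}$-rectifiability of $S_u$.

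Second, to prove $\mathcal{H}^{n-1}(S_u \setminus J_u) = 0$ and the representation formula on $J_u$, I would perform a blow-up analysis at $\mathcal{H}^{n-1}$-a.e.\ $x \in S_u$. By the rectifiability just established, $S_u$ admits an approximate tangent hyperplane $\nu^\perp$ at such $x$, and the normal $\nu$ agrees, up to sign, with $\nu_{E_t}(x)$ for every relevant level $t$. Slicing $u$ along the line through $x$ with direction $\nu$ and invoking the one-dimensional $BV$ theory (in which blow-ups at jump points are Heaviside steps), one concludes that the blow-up of $u$ at $x$ is a two-valued function $a\,\mathbf{1}_{\{y\cdot\nu>0\}} + b\,\mathbf{1}_{\{y\cdot\nu\leq 0\}}$ with $\{a,b\} = \{u^+(x), u^-(x)\}$, so $x \in J_u$. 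The representation $Du \restr J_u = (u^+-u^-)\nu_u\, \mathcal{H}^{n-1}\restr J_u$ then follows by passing to the limit in the rescaled measures and applying the Radon-Nikodym differentiation theorem to $Du$ with respect to $\mathcal{H}^{n-1}\restr J_u$, the density being identified from the blow-up.

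The main obstacle, I expect, is ensuring that no part of $|Du|$ concentrates on $S_u$ at scales below $\mathcal{H}^{n-1}$, i.e., ruling out that the Cantor part $D^c u$ can charge $S_u$. The cleanest route is to establish the lower density bound $|Du|(B_r(x)) \geq c(n)\, r^{n-1}\, |u^+(x) - u^-(x)|$ for every $x \in S_u$ and small $r$, using the relative isoperimetric inequality applied to level sets; this density bound then feeds into Federer's rectifiability criterion and, combined with the finiteness of $|Du|$, controls the Hausdorff measure of $S_u$ and closes the argument of the first step above.
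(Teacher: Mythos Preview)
The paper does not give its own proof of this statement: it is a classical structure theorem for $BV$ functions (the Federer--Vol'pert theorem), and the paper simply cites \cite{AmbFusPa} for it. So there is no proof in the paper to compare against.

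That said, your sketch is a faithful outline of the standard argument found in the cited reference: reduction via the coarea formula to level sets, De Giorgi's rectifiability theorem for sets of finite perimeter, the lower density bound $|Du|(B_r(x)) \gtrsim r^{n-1}|u^+(x)-u^-(x)|$ to control $\mathcal{H}^{n-1}\restr S_u$, and a blow-up to identify $J_u$ and the jump part of $Du$. One point deserving care in a full write-up is the step where you show that the normals $\nu_{E_t}(x)$ agree for \emph{all} relevant levels $t$ at $\mathcal{H}^{n-1}$-a.e.\ $x\in S_u$; this is where the rectifiability of $S_u$ (via a countable dense family of levels) is used in an essential way, and it is what ultimately permits the two-valued blow-up. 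Your proposal names this step but does not indicate how to carry it out; in the standard treatment it relies on the fact that on a rectifiable set the approximate tangent plane is $\mathcal{H}^{n-1}$-a.e.\ unique, forcing coherence of the level-set normals.
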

We indicate by $Du= \nabla u \ dx \ + \ D^s u$ the Radon-Nikodym decomposition of $Du$. 
Setting $D^c  u:= D^s u \restr (\Om \backslash S_u)$ we get the decomposition
\[Du= \nabla u \ dx \ +\ (u^+-u^-)\nu_u \hausmoins \restr J_u\ +\  D^c u,\]
where $\restr$ denotes the  restriction. In particular, if $u=\pi\1_{E}\in BV(\Om,\{0,\pi\})$ then $Du= \pi \nu^E \hausmoins\restr \partial^*E$ where $\partial^*E$ is the reduced boundary of $E$ defined by
\[\partial^* E:= \left\{ x \in \textrm{Spt}( |D \1_E|) \,: \, \nu^E(x):=- \lim_{r \downarrow 0} \frac{ D \1_E (B_r(x))}{|D \1_E|(B_r(x))} \; \textrm{exists and } \; |\nu^E(x)|=1 \right\}\]
and $\nu^E$ is the outward measure theoretic normal to the set $E$ which is countably $\hausmoins$-rectifiable.
When $n=1$ we use the symbol $u'$ in place of $\nabla u$,
and $u(x^\pm)$ to indicate the right and left limits at $x$.

\subsection{Slicing method}\label{sub:slicing}
 In this section we recall the slicing method for functions with bounded variation \cite[Ch. 4]{braides} which will be used in the proof of the lower $\Gamma$-limit. 
Consider an open set $A \subset   \R^n$ and let $\nu \in \Sn$. We call $\Pi_\nu$ the hyperplane orthogonal to $\nu$ and $A_\nu$ the projection of $A$ on $\Pi_\nu$. We define the one dimensional slices of $A$, indexed by $x \in A_\nu$, as

\ben
	 A_{\nu x}: = \{ t \in \R \,;\, x+t\nu \in A\} \,.
\een

For every function $f$ in $\R^n$, we note $f_{\nu x}$ the restriction of $f$ to the slice $A_{\nu x}$, defined by $f_{\nu x}(t) := f(x+t\nu)$ .
Functions in $BV(\Om)$ can be characterised 
by one-dimensional slices (see \cite{braides}).
\begin{theo}\label{teo:slice}
Let $u \in BV(A)$.
Then for all $\nu \in \Sn$ we have 
$$
u_{\nu x} \in BV(A_{\nu x}) \qquad {\rm for}~ \hausmoins- {\rm a.e.}~ 
x \in A_\nu. 
$$
Moreover, for such points $x$, we have
\begin{equation}\label{egalprim}
 u'_{\nu x }(t)= \sprod{\nabla u(x+t \nu)}{\nu} \quad \textrm{for a.e. } t \in A_{\nu x},
\end{equation}
\begin{equation}\label{jumpslice}
 J_{u_{\nu x}}=\{ t \in \R : x+t \nu \in J_u\},
\end{equation}
and
\begin{equation}\label{formjumpslice}
 u_{\nu x} (t^\pm)=u^\pm(x+t \nu) \quad \textrm{or} \quad u_{\nu x} (t^\pm)=u^\mp(x+t \nu),
\end{equation}
according to whether $\sprod{\nu_u}{\nu}>0$ or $\sprod{\nu_u}{\nu}<0$.
Finally, for every Borel function $g: A \to \R$, 
\begin{equation}\label{integjumpslice}
 \int_{A_\nu} \sum_{t \in J_{u_{\nu x}}} g_{\nu x} (t ) 
\ d\hausmoins (x)=\int_{J_u} g~ |\sprod{\nu_u}{\nu}| \ d\hausmoins.
\end{equation}
Conversely if $u \in L^1(A)$ and if for all $\nu \in \{e_1,\dots, e_n\}$, where $(e_1, \dots, e_n)$ is a basis of $\R^n$,  and  almost
every $x \in A_\nu$ we have $u_{\nu x} \in BV(A_{\nu x})$ and
\[\int_{A_\nu} |Du_{\nu x}|(A_{\nu x})\ d\hausmoins(x) < +\infty,\]
then $u \in BV(A)$. 
\end{theo}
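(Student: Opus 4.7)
The plan is to prove the theorem by first establishing the slicing identities for smooth functions (where everything reduces to elementary calculus and Fubini's theorem), and then to extend to general $u\in BV(A)$ via a density argument using the Anzellotti--Giaquinta smooth approximation (i.e.\ $u_k\in C^\infty(A)\cap BV(A)$ with $u_k\to u$ in $L^1$ and $|Du_k|(A)\to |Du|(A)$). By orthogonal invariance it suffices to treat the case $\nu=e_n$, and then slices are parametrised by $x\in A_\nu=\Pi_\nu\subset\R^{n-1}$.

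For the direct implication, I would proceed in three steps. First, for smooth $u_k$, Fubini immediately gives $(u_k)_{\nu x}\in W^{1,1}(A_{\nu x})$ for a.e.\ $x\in A_\nu$, with $(u_k)'_{\nu x}(t)=\sprod{\nabla u_k(x+t\nu)}{\nu}$ and $\int_{A_\nu}\int_{A_{\nu x}}|(u_k)'_{\nu x}|\,dt\,d\hausmoins(x)=\int_A|\nabla u_k\cdot\nu|\,dx\le|Du_k|(A)$. Passing to $u\in BV(A)$, the $L^1$ convergence together with Fubini produces $(u_k)_{\nu x}\to u_{\nu x}$ in $L^1(A_{\nu x})$ for $\hausmoins$-a.e.\ $x$, and by lower semicontinuity of the total variation
\[
\int_{A_\nu}|Du_{\nu x}|(A_{\nu x})\,d\hausmoins(x)\;\le\;\liminf_{k}\int_{A_\nu}|(u_k)'_{\nu x}|(A_{\nu x})\,d\hausmoins(x)\;\le\;\int_A|\sprod{\nu}{\cdot}|\,d|Du|\;<\;\infty.
\]
Hence $u_{\nu x}\in BV(A_{\nu x})$ for $\hausmoins$-a.e.\ $x$. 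The pointwise identity \eqref{egalprim} for the absolutely continuous part then follows from Fubini applied to $\nabla u\,dx$, and the bound above identifies the total variation with the projection of $|Du|$ on $\nu$.

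For the jump-set identities \eqref{jumpslice}, \eqref{formjumpslice} and the co-area formula \eqref{integjumpslice}, the decomposition $Du=\nabla u\,dx+(u^+-u^-)\nu_u\hausmoins\restr J_u+D^cu$ must be analysed term by term. The countable $\hausmoins$-rectifiability of $J_u$ allows one to write $J_u$ as a countable union of $C^1$ hypersurfaces; on each such piece the coarea/area formula directly gives \eqref{integjumpslice} and tells us that a one-dimensional slice at $\hausmoins$-a.e.\ $x\in A_\nu$ intersects $J_u$ in a countable set which is exactly $J_{u_{\nu x}}$. The sign alternative in \eqref{formjumpslice} records the orientation of the slicing direction $\nu$ against the measure-theoretic normal $\nu_u$. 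Finally, one verifies that the Cantor part $D^cu$ carries no atoms on $\hausmoins$-a.e.\ slice, so it does not create jumps for $u_{\nu x}$; this is the only delicate ingredient and relies on the fact that $|D^cu|$ vanishes on $\hausmoins$-finite sets.

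For the converse implication, the strategy is to show that each distributional partial derivative $\partial_i u$ is a Radon measure with finite mass. Given $\phi\in C_c^\infty(A)$, I would use Fubini to write
\[
\int_A u\,\partial_i\phi\,dx\;=\;\int_{A_{e_i}}\int_{A_{e_i,x}}u_{e_i,x}(t)\,\phi'_{e_i,x}(t)\,dt\,d\hausmoins(x),
\]
and on each slice integrate by parts against the measure $Du_{e_i,x}$. The hypothesis that $\int_{A_{e_i}}|Du_{e_i,x}|(A_{e_i,x})\,d\hausmoins(x)<\infty$ then yields $\bigl|\int_A u\,\partial_i\phi\,dx\bigr|\le\|\phi\|_\infty\int_{A_{e_i}}|Du_{e_i,x}|(A_{e_i,x})\,d\hausmoins(x)$, and the Riesz representation theorem (applied to each of the $n$ basis directions in turn) delivers $u\in BV(A)$. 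The main obstacle is the jump-set analysis in the direct implication: controlling the Cantor part on slices and obtaining the precise identifications \eqref{jumpslice}--\eqref{formjumpslice} rather than mere inequalities requires the full rectifiability structure of $J_u$ and a careful use of the area formula on rectifiable sets.
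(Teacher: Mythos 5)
This theorem is stated in the paper without proof: it is the classical one-dimensional slicing characterisation of $BV$ functions, quoted verbatim from the literature (Braides, Ch.~4, and Ambrosio--Fusco--Pallara). Your sketch correctly reproduces the standard argument of those references — smooth approximation plus Fubini and lower semicontinuity for membership of the slices in $BV$, the rectifiability of $J_u$ together with the coarea formula and the fact that $|D^c u|$ vanishes on $\hausmoins$-$\sigma$-finite sets for the jump identities \eqref{jumpslice}--\eqref{integjumpslice}, and sliced integration by parts with Riesz representation for the converse — so there is nothing to compare against in the paper itself, and no gap to report beyond the details you already flag as requiring the full structure theory of $J_u$.
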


 \section{Derivation of the energy $\Feb$ from the coupled Gross-Pitaevskii functional} \label{Sec:derenergy}   

A two-components condensate is described by two functions $u_1$ and $u_2$, where $|u_1|^2$ and $|u_2|^2$ respectively represent the densities of the first and second component.
 The energy of the two-components condensate is given by a coupled Gross-Pitaevskii functional. When the intracomponent coupling strength of each component is equal to $1/\eps^2$, and when the intercomponent coupling strength is equal to $(1+\beta) /\eps^2$,
 the functional is given by   

\ben
	\mathcal{E}_\eps(u_1,u_2) :=  	 E_\eps(u_1)  + E_\eps(u_2)  + \frac{1+\beta}{2 \eps^2} \int_{\R^n}    |u_1|^2 |u_2|^2 \, dx \,, 
\een       

where $E_\eps$ is defined in (\ref{defGP1}). Assuming that the mass of each component is preserved, the functional $\mathcal{E}_\eps$ is minimised under the restrictions

\be \label{massu1u2}
	\int_{\R^n}   |u_1|^2 \, dx  = \alpha_1 \qquad \text{ and }  \qquad \int_{\R^n}   |u_2|^2 \, dx= \alpha_2 
\ee  

with $\alpha_1, \alpha_2 >0 $ and $\alpha_1 +\alpha_2 =\|\eta \|_2=1$. \\

Standard arguments used in the study of a single component condensate yield that the minimisers of $\mathcal{E}_\eps$ under the constraint (\ref{massu1u2}) are smooth
 positive functions, up the multiplication by constant terms of modulus $1$, with $L^\infty$ norm  uniformly bounded with respect to $\eps $ (see \cite{AfLivre,AftRoyo,IgnMil,KarSour}). 
 Notice  also that for a radial potential $V$, if $(u_1,u_2)$ is a minimiser, then for any rotation $R$ of the space, 
$(u_1\circ R,u_2\circ R)$ is also a minimiser. In the single component case, the Euler-Lagrange equations imply uniqueness of the minimiser from which one can infer its radial symmetry. For two components condensates, 
 this is not the case anymore.\\

The relation between $\mathcal{E}_\eps$ and $\Feb$ was established in \cite{AftRoyo}.  Using the nonlinear sigma model representation \cite{KaTsuUe,MaAf} and 
the Lassoued-Mironescu trick to decompose the energy of a rotating single condensate \cite{LaMi}, the authors introduced the change of variables 

\be \label{chvar}
	v := \frac{\sqrt{|u_1|^2+|u_2|^2}}{\eta_\eps} \qquad \text{ and } \qquad \frac{\p}2 := \text{Arg} \left(\frac {|u_1|+i|u_2| } {\sqrt{|u_1|^2+|u_2|^2 }} \right)  \, 
\ee 

for any pair $(u_1,u_2)$ such that $\mathcal{E}_\eps(u_1,u_2) <\infty$ and $|u_1|^2+|u_2|^2>0$. The equality 

\be \label{decompenergy}
	\mathcal{E}_\eps(u_1,u_2) = \Feb(v,\p) + E_\eps(\eta_\eps)
\ee 

then holds, and the mass constraints in (\ref{massu1u2}) rewrite as in (\ref{massvphi}). Let us point out that in \cite{AftRoyo},  only the case $n=2$ is considered  but the proof carries over verbatim to any space dimension. 
As seen from \eqref{decompenergy} and the expression  \eqref{defFeb} of $\Feb$, there are two main advantages of the formulation of the problem in terms of the functions $(v,\p)$. 
On the one hand, it naturally identifies the leading order term $E_\eps(\eta_\eps)$.  On the other hand, it clearly shows that the second order contribution $\Feb$ 
is a singular perturbation type functional.\\

Notice that since the minimisers $(u_1,u_2)$ are uniformly bounded and $\eta_\eps$ does not vanish, for every compact set $K$ of $\D$, there exists a constant $C(K)$ such that $0< v\le C(K)$ in $K$.
 Moreover, it is readily seen from the definition that $\p\in[0,\pi]$. We are thus naturally led to minimize $\Feb$ in the class
\[Y(\D):=\left\{ (v,\p) \, : \, \textrm{for every compact set $K\subset \D$, } 0< v\le C(K) \textrm{ in $K$ and } \p\in[0,\pi]\right\}\]   
under the mass constraints \eqref{massvphi}. For a subset $A$ of $\D$, we introduce the localised version of $\Feb$:
\[\Feb(v,\p;A):=\frac{1}{2}\int_{A} \eta_\eps^2 |\nabla v|^2 +  \frac1{2\eps^2} \eta_\eps^4 (1-v^2)^2 +  \frac14   \eta_\eps^2 v^2  \, |\nabla \p|^2 + \frac1{4 \eps^2} \beta  \eta_\eps^4 v^4 \,  \sin^2 \p  \, dx    \,,\]
and 
\[Y(A):=\left\{ (v,\p) \, : \, \textrm{for every compact set $K\subset \D$, } 0< v\le C(K) \textrm{ in $K\cap A$ and } \p\in[0,\pi]\right\}.\]

Notice that, for any $(v,\p)\in Y(\D)$,
 defining

\be \label{chvarinv}
	u_1:=\eta_\eps v \cos(\p/2) \qquad \text{ and } \qquad u_2:=\eta_\eps v \sin(\p/2)   
\ee 

relation (\ref{decompenergy}) holds and we have $|u_1|^2+|u_2|^2>0$.\\

In the following  we are going to make the following assumptions on $V$:

\begin{hyp}\label{hypo}
$V$ is such that $V(x)\to +\infty$ when $|x|\to +\infty$ and there exist $C, a ,b,c >0$ such that if $\rho$ is the Thomas-Fermi profile defined in \eqref{defrhointro},

 \ba  
	\|\eta_\eps\|_\infty &<& C  \label{etainfty} \\
 		\|\eta_\eps\|_{L^2(\R^n \setminus \D)} &\leq&  C \, \eps^a    \label{estoutbulk} \\ 
	    |\eta_\eps(x) - \sqrt{\rho}(x) | &\leq& C \, \eps^c \, \qquad \textrm{if } \dist(x,\partial \D)> C \eps^b    \label{localC0cvetaepsrho} 
\ea 
\end{hyp}

We remark that for the harmonic potential $V(x)=|x|^2$, it was proven in \cite{IgnMil} that these conditions hold true in dimension $n=2$. Moreover, it can be checked that their proof carries over almost verbatim to any space dimension. 
Recently, Karali and Sourdis \cite{KarSour}, obtained that if $n=2$, Hypothesis \ref{hypo} holds if $V$ satisfies:
\begin{itemize}
 \item[(i)] $V$ is nonnegative and $C^1$,
\item[(ii)] there exist $C>1$, $p\ge 2$ such that $\frac{1}{C}(1+ |x|^p)\le V(x)\le C(1+|x|^p)$,
\item[(iii)] $\D$ is a simply connected bounded domain containing the origin with smooth boundary and such that $\frac{\partial V}{\partial \nu}>0$ on $\partial \D$. 
\end{itemize}
Notice that in their paper, Karali and Sourdis  prove that $\|\eta_\eps-\sqrt{\rho}\|_{L^\infty(\R^2)}\le C \eps^{1/3}$ \cite[Rem. 4.4]{KarSour} which is  stronger than \eqref{localC0cvetaepsrho}.
 They also claim that their proof should extend to any space dimension (see \cite[Rem. 3.12]{KarSour}) 
and that the fact that $\D$ is simply connected is superfluous (see \cite[Rem. 1.1]{KarSour}).     

\section{The surface tension at finite $\beta>0$}\label{Sec:surftens}
In this section, for  $\beta>0$ fixed, we study the following variational problem:
\begin{equation}
 \sigbrb:=\inf\left\{ \Eund(v,\p) \,  : \,v\ge 0, \,  0\le \p \le \pi,   \, \lim_{-\infty} \p= 0 \textrm{ and } \, \lim_{+\infty} \p =\pi\right\},
\end{equation}
where 
\begin{equation}\label{energ1D}
 \Eund(v,\p):=\frac{1}{2} \int_{-\infty}^{+\infty}  v'^2 + \Wr(v)+\frac{1}{4} v^2 \p'^2 +\frac{\beta}{4} v^4  \sin^2 \p \, dt,
\end{equation}

with $\Wr(v):=\frac{1}{2} \left(1-v^2\right)^2$.
%

Let us point out that if $\Eund(v,\phi)$ is finite then $\lim_{x\to \pm \infty} v(x)=1$.\\

We start by evaluating the energy necessary to connect $v$ from a given value $m>0$ to $1$.
\begin{lem}\label{minMMrho}
Let $m\in [0,1]$ then
 \[\inf\left\{ \int_{0}^{+\infty} v'^2 + \Wr(v)\, dt \, : \, v(0)=m \right\}=\sqrt{2} \left(\frac{2}{3}-m+ \frac{m^3}{3}\right),\]
and the optimal profile is given by $v_m:=\ds\tanh \left(\sqrt{\frac{1}{2}} \, t + c_m \right)$ where $c_m:=\tanh^{-1}(m)$.
\end{lem}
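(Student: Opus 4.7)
The plan is to use the standard Modica-Mortola/Bogomolnyi trick that turns the one-dimensional double-well energy into an exact differential via the AM-GM inequality.

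First, for any admissible $v$ (absolutely continuous, with $v(0)=m$ and finite energy on $(0,+\infty)$), I would apply the pointwise inequality
\[
v'^{2} + W(v) \;\ge\; 2|v'|\sqrt{W(v)} \;=\; \sqrt{2}\,|v'|\,|1-v^{2}|,
\]
which comes from $(a-b)^{2}\ge 0$ with $a=v'$, $b=\sqrt{W(v)}$, and then
\[
\sqrt{2}\,|v'|\,|1-v^{2}| \;\ge\; \sqrt{2}\,\bigl|v'(1-v^{2})\bigr| \;=\; \sqrt{2}\,\left|\tfrac{d}{dt}\bigl(v-\tfrac{v^{3}}{3}\bigr)\right|.
\]
Before taking the integral I need to justify that $v(t)\to 1$ as $t\to +\infty$. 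Finiteness of $\int_{0}^{+\infty} W(v)\,dt$ forces $v^{2}\to 1$ along a sequence, and finiteness of $\int_{0}^{+\infty} v'^{2}\,dt$ together with continuity rules out oscillation between $+1$ and $-1$; since admissible profiles can be taken $\ge 0$ (replacing $v$ by $|v|$ does not increase the energy), the limit at $+\infty$ is $1$. Integrating, I obtain
\[
\int_{0}^{+\infty}\!\! v'^{2}+W(v)\,dt \;\ge\; \sqrt{2}\left|\bigl[v-\tfrac{v^{3}}{3}\bigr]_{0}^{+\infty}\right| \;=\; \sqrt{2}\left(\tfrac{2}{3}-m+\tfrac{m^{3}}{3}\right),
\]
using $m\in[0,1]$ so the right-hand side is nonnegative.

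For the upper bound and the identification of the optimal profile, I would trace back the equality cases. Equality in the first inequality requires $v'=\sqrt{W(v)}$, that is $v'=\tfrac{1}{\sqrt{2}}(1-v^{2})$ (with the correct sign so that $v$ is increasing from $m$ to $1$). Separation of variables gives $\tanh^{-1}(v)=\tfrac{t}{\sqrt{2}}+c$, and the initial condition $v(0)=m$ pins down $c=c_m=\tanh^{-1}(m)$, producing $v_m(t)=\tanh\!\bigl(\tfrac{t}{\sqrt{2}}+c_m\bigr)$. A direct check shows $v_m'^{\,2}=W(v_m)$ (equipartition of the energy), so the integrand collapses to $\sqrt{2}\,v_m'(1-v_m^{2})$, which integrates exactly to $\sqrt{2}\bigl(\tfrac{2}{3}-m+\tfrac{m^{3}}{3}\bigr)$, matching the lower bound.

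There is no serious obstacle: the only non-mechanical point is the justification that $v$ genuinely has limit $1$ at $+\infty$ (which I would dispatch by the standard observation above, or equivalently by noting that we may assume $v$ monotone through a rearrangement-type argument that does not increase either term of the energy). Everything else is a routine equality analysis in the AM-GM inequality followed by an explicit ODE integration.
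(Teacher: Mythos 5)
Your argument is correct and is exactly the paper's approach: the paper's proof is a one-liner invoking ``the usual Modica--Mortola problem'' to write the infimum as $\sqrt{2}\int_m^1(1-t^2)\,dt$, and you have simply spelled out the standard Young-inequality lower bound, the limit $v\to 1$ at infinity, and the explicit $\tanh$ competitor saturating it. No gaps.
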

\begin{proof}
 As in the usual Modica-Mortola problem,
\[
 \inf_{v(0)=m }\int_{0}^{+\infty} v'^2 + \Wr(v) \, dt  =\sqrt{2} \int_m^1 (1-t^2) dt
=\sqrt{2} \left(\frac{2}{3}-m+ \frac{m^3}{3}\right).
\]

\end{proof}

 We now prove that we can restrict ourselves to functions $v$ which stay away from zero.
\begin{prop}\label{away}
 For every $\beta>0$, there exists $m^*=m^*(\beta)>0$ such that 
\[ \sigbrb=\inf\left\{ \Eund(v,\p) \,  :\,   v\in[m^*,1], \, \, \lim_{-\infty} \p= 0  \textrm{ and }  \, \lim_{+\infty} \p =\pi \right\}.\]
 
\end{prop}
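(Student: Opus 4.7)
The plan is to reduce to $v\in[0,1]$ by truncation, then exploit a gap between a strict upper bound on $\sigbrb$ and a lower bound on $\Eund(v,\phi)$ in terms of $\inf v$ obtained from Lemma~\ref{minMMrho}. For the reduction, replacing $v$ by $v\wedge 1$ zeroes $v'$ on $\{v>1\}$ and decreases $\Wr(v)$, $v^2$ and $v^4$ simultaneously while preserving $v(\pm\infty)=1$; hence it suffices to search for $m^*$ in the class $\{v\in[0,1]\}$.

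\emph{Strict upper bound $\sigbrb<\tfrac{2\sqrt2}{3}$.} For $\beta<32/9$, the test pair $(1,\phi_*)$, with $\phi_*$ the sine-Gordon soliton $\phi_*'=\sqrt\beta\sin\phi_*$ and the prescribed limits, gives by the standard Cauchy-Schwarz argument $\Eund(1,\phi_*)=\tfrac{\sqrt\beta}{2}<\tfrac{2\sqrt2}{3}$. For general $\beta>0$ I would take the dipping competitor $(v_m,\phi_m)$, with $v_m(t):=\tanh(|t|/\sqrt2+\tanh^{-1} m)$ the symmetric optimal Modica-Mortola profile of Lemma~\ref{minMMrho} reaching its minimum $m$ at $t=0$, and $\phi_m$ the solution of $\phi_m'=\sqrt\beta\, v_m\sin\phi_m$ (a non-degenerate ODE since $v_m\ge m>0$) with $\phi_m(\pm\infty)=0,\pi$. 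Lemma~\ref{minMMrho} gives the first two terms of $\Eund(v_m,\phi_m)$ exactly as $\sqrt2(\tfrac23-m+\tfrac{m^3}{3})$, while saturation of AM-GM in the last two (after the change of variable $d\phi=\phi_m'\,dt$) yields $\tfrac{\sqrt\beta}{4}\int_0^\pi v_m(\phi)^3\sin\phi\,d\phi$, a quantity controlled via the integrated relation $\tan(\phi/2)=\bigl(\tfrac{1-m^2}{1-v_m^2}\bigr)^{\sqrt{\beta/2}}$ which shows $v_m\to m$ on $(0,\pi)$ as $\beta\to\infty$. A direct estimate gives this integral $\le 2m^3$ up to lower-order terms, so that
\[\Eund(v_m,\phi_m)\le \tfrac{2\sqrt2}{3}-\sqrt2\,m+\tfrac{\sqrt\beta}{2}m^3+o(m).\]
Optimising at the scale $m\sim\beta^{-1/4}$ produces $\Eund(v_m,\phi_m)<\tfrac{2\sqrt2}{3}$, which combined with the previous range covers every $\beta>0$.

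\emph{Lower bound and conclusion.} For any admissible $(v,\phi)$ in the reduced class, let $m:=\inf_\R v$ and pick $t_0$ almost attaining it. Dropping the last two nonnegative terms of $\Eund$ and applying Lemma~\ref{minMMrho} on each side of $t_0$ (by reflection and $v(\pm\infty)=1$) gives
\[\Eund(v,\phi)\ge\tfrac12\!\int_{-\infty}^{+\infty}\!v'^2+\Wr(v)\,dt\ge\sqrt2\Bigl(\tfrac23-m+\tfrac{m^3}{3}\Bigr).\]
Setting $\delta:=\tfrac{2\sqrt2}{3}-\sigbrb>0$ from the previous step and choosing $m^*\in(0,1)$ with $\sqrt2\bigl(m^*-\tfrac{(m^*)^3}{3}\bigr)<\delta$ forces any competitor with $\inf v<m^*$ to satisfy $\Eund(v,\phi)>\sigbrb$. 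Such a profile cannot lie in any minimising sequence, and the two infima in the statement consequently coincide.

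The main difficulty lies in the strict upper bound for moderate to large $\beta$: the naive $v\equiv 1$ test is sharp only for $\beta<32/9$, so for larger $\beta$ one must construct a dipping competitor whose $\phi$-transition is localised in the low-$v$ region via the explicit ODE relating $v_m$ and $\phi_m$, and the resulting critical scaling $m\sim\beta^{-1/4}$ is precisely the one anticipated for $m(\beta)$ in the introduction.
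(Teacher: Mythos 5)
Your overall strategy is the one the paper uses: truncate to $v\in[0,1]$, bound $\Eund(v,\p)$ from below by the Modica--Mortola cost $\sqrt2\bigl(\tfrac23-m+\tfrac{m^3}{3}\bigr)$ of dipping to $m=\inf v$ via Lemma \ref{minMMrho} applied on each side of the infimum point, and play this against a strict upper bound $\sigbrb<\tfrac{2\sqrt2}{3}$. Your closing ``gap'' argument (choose $m^*$ with $\sqrt2\bigl(m^*-\tfrac{(m^*)^3}{3}\bigr)<\tfrac{2\sqrt2}{3}-\sigbrb$) is correct and is essentially the paper's definition of $m^*(\beta)$.

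The genuine hole is in the strict upper bound for intermediate $\beta$. The competitor $(1,\p_*)$ covers only $\beta<32/9$, and your dipping competitor with $\p_m'=\sqrt\beta\,v_m\sin\p_m$ and $m\sim\beta^{-1/4}$ only produces a negative correction once $\beta$ is large enough that $\int_0^\pi v_m(\p)^3\sin\p\,d\p=O(m^3)$; at that scale the transition of $\p_m$ occurs where $v_m^2-m^2=O(\beta^{-1/2})\sim m^2$, so the constant is not $2$ but some absolute $C$ --- harmless, but the estimate is genuinely asymptotic, and nothing you wrote guarantees the two ranges of $\beta$ overlap. Two fixes: (i) note that $\Eund$ is pointwise nondecreasing in $\beta$, hence so is $\sigbrb$, so $\sigbrb\le\ov{\sigma}_{B}<\tfrac{2\sqrt2}{3}$ for all $\beta\le B$ once your dipping competitor works at some large $B$; or (ii) use the paper's cruder competitor, $v\equiv m$ on $[-T,T]$ glued to the optimal $\tanh$ profiles outside and $\p$ linear on $[-T,T]$, whose energy after optimizing in $T$ is exactly $\sqrt2\bigl(\tfrac23+\Psi(m)\bigr)$ with $\Psi(m)=\tfrac{m^3}{3}-m+\tfrac{\pi}{4}m\bigl((1-m^2)^2+\tfrac{\beta}{4}m^4\bigr)^{1/2}$; the single observation $\Psi'(0)=\tfrac{\pi}{4}-1<0$ then gives $\sigbrb<\tfrac{2\sqrt2}{3}$ uniformly for every $\beta>0$ in one stroke, with no case distinction. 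I would adopt (ii), or at minimum add the monotonicity remark (i) to make your ``covers every $\beta>0$'' claim true.
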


\begin{proof}
 First, let us notice that by truncation, we can reduce ourselves to minimise among functions $v\in [0,1]$. Up to translation we can also assume that $\inf_{\R} v=v(0)$. Let $m\ge 0$, then for every  function $v$ such that $\inf_{\R} v=v(0)= m$ and every admissible $\p$,
\begin{align*}
 \Eund(v,\phi)&\ge \frac{1}{2} \left[\inf_{v(0)=m} \int_{-\infty}^0 v'^2 + \Wr(v)\, dt \right]
+\frac{1}{2} \left[\inf_{v(0)=m} \int_{0}^{+\infty} v'^2 + \Wr(v) \, dt \right]\\
& \quad + \frac{1}{2} \int_{\R} \frac{1}{4} v^2 \p'^2 +\frac{\beta}{4} v^4  \sin^2 \p \, dt\\
& \ge \left[\inf_{v(0)=m}\int_{0}^{+\infty} v'^2 + \Wr(v) \,dt\right]+\frac{1}{4} \int_{\R}  \beta^{1/2} v^3 |\sin \p||\p'| \, dt\\
&\ge \sqrt{2} \left(\frac{2}{3}-m+ \frac{m^3}{3}\right)+\frac{ \beta^{1/2}m^3}{4}\int_{\R}   |\sin \p||\p'| \, dt\\
&=\sqrt{2} \left(\frac{2}{3}-m+ \frac{m^3}{3}\right)+\frac{ \beta^{1/2}m^3}{4} \int_0^\pi   |\sin x| \, dx\\
&=\sqrt{2} \left(\frac{2}{3}-m+ m^3\left(\frac{1}{3}+ \frac{ \beta^{1/2}}{2\sqrt{2}}\right)\right) \,.
\end{align*}
Now, for $m\ge 0$ and $T>0$, consider the test functions defined by

\[v_{m,T}:=\begin{cases}
                   v_m(-t-T) & t<-T\\
		  m & t\in[-T,T]\\
		  v_m(t-T)  & t\ge T
                  \end{cases}
\quad \textrm{ and } \quad
\p_T := \begin{cases}
                   0 & t<-T\\
		  \frac{\pi}{2T}(t+T) & t\in[-T,T]\\
		  \pi  & t\ge T,
                  \end{cases}
\]
 then 
\begin{multline} \label{a}
\Eund(v_{m,T},\p_T)=\sqrt{2} \left(\frac{2}{3}-m+ \frac{{m}^3}{3}\right)+\frac{T}{2} (1-{m}^2)^2+ \frac{{m}^2\pi^2}{16 T}+\frac{1}{4}\beta {m}^4\int_0^T \sin^2\lt(\frac{\pi}{2T}(t+T)\rt)dt\\
= \sqrt{2} \left(\frac{2}{3}-m+ \frac{{m}^3}{3}\right)+\frac{T}{2} (1-{m}^2)^2+ \frac{{m}^2\pi^2}{16 T}+\frac{\beta}{8} {m}^4 T.
\end{multline}
Optimizing in $T$ we find $T_m:=\frac{{m} \pi}{2\sqrt{2}((1-{m}^2)^2+\frac{\beta}{4} {m}^4)^{1/2}}$ and 
\begin{equation}\label{eqEm}
 \Eund(v_{m,T_m},\p_{T_m})=\sqrt{2} \left(\frac{2}{3}-m+ \frac{{m}^3}{3}\right)+ \frac{\sqrt{2}}{4} {m} \pi \lt((1-{m}^2)^2 +\frac{\beta}{4} {m}^4\rt)^{1/2}.
\end{equation}
Let now (see Figure \ref{fig1})
\[\Psi(m):= \left( \frac{m^3}{3}-m\right)+ \frac{1}{4} {m} \pi \lt((1-{m}^2)^2 +\frac{\beta}{4} {m}^4\rt)^{1/2}\]
so that $\Eund(v_{m,T_m},\p_{T_m})=\sqrt{2} \, (\Psi(m) + \frac{2}{3})$ and let 
\[\ov m:=\argmin_{m\in[0,1]} \Psi(m).\]
Let us first notice that  since $\Psi(0)=0$ and $\Psi'(0)=\frac{\pi}{4}-1<0$, the minimum of $ \Psi$ is negative for every $\beta>0$. The function $\frac{m^3}{3}-m$ is decreasing in $[0,1]$ and $\Psi(\ov m)> -\frac{2}{3}$ hence there exists a 
unique $m^*(\beta)\in(0,1)$ such that $\frac{m^*(\beta)^3}{3}-m^*(\beta)=\Psi(\ov m)$.  We claim that
\[ \sigbrb=\inf\left\{ \Eund(v,\p) \,  :\,   \inf v\ge m^*(\beta), \, \, \lim_{-\infty} \p= 0  \textrm{ and }  \, \lim_{+\infty} \p =\pi \right\}.\]
Indeed, if $v$ is such that $\inf v\le m^*(\beta)$ and if $\p$ is any admissible function, then letting $m:=\inf v$, there holds
\begin{align*}\Eund(v_{\ov m,T_{\ov m}},\p_{T_{\ov m}})=&\sqrt{2} \, \Big(\Psi(\ov m) + \frac{2}{3}\Big)< \sqrt{2} \Big(\frac{m^3}{3}-m + \frac{2}{3}\Big)\\
\le& \sqrt{2} \left(\frac{2}{3}-m+ m^3\left(\frac{1}{3}+ \frac{ \beta^{1/2}}{2\sqrt{2}}\right)\right)\le \Eund(v,\phi)\end{align*}
so that we can construct a competitor with smaller energy than $(v,\p)$. 
\end{proof}

\begin{figure}[ht]
\centering
 \includegraphics[width=6cm]{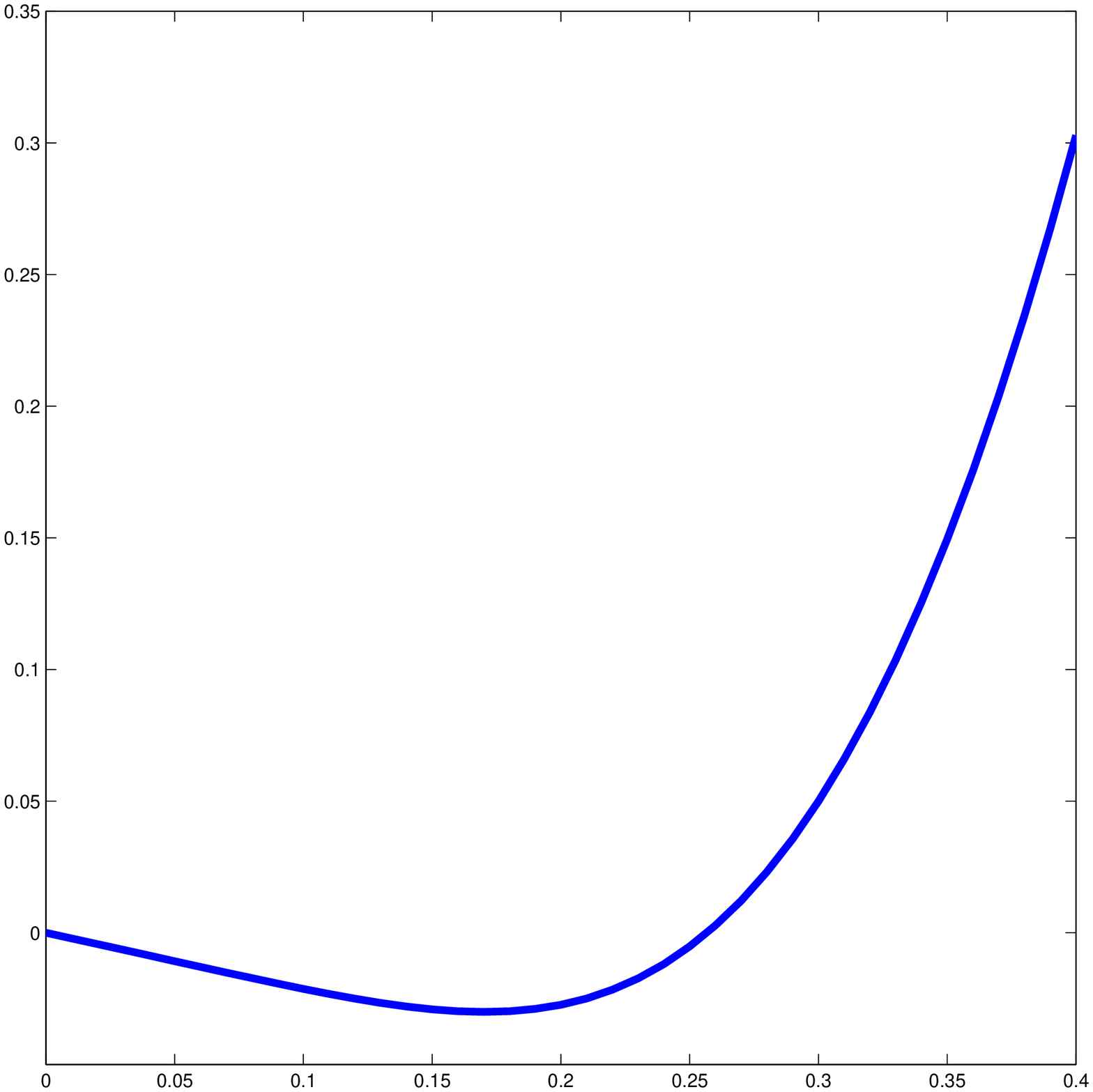}
\caption{The function $\Psi$}\label{fig1}
\end{figure}

 In the regime $\beta\to +\infty$, we can prove a more precise bound on $\inf v$. Notice that in the case $\beta=+\infty$, \cite{AftRoyo} proved that $\inf v =0$. 

\begin{prop}\label{decayinf}
There exist constants $B,C>0$ such that if $\beta\ge B$,  
\[ \sigbrb=\inf\left\{ \Eund(v,\p) \,  :\,   \frac{1}{C}\beta^{-1/4}\le \inf v\le C \beta^{-1/4}, \, \, \lim_{-\infty} \p= 0  \textrm{ and }  \, \lim_{+\infty} \p =\pi \right\}.\]

\end{prop}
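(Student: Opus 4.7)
The plan is to quantify the argument of Proposition \ref{away} so as to pin down the scale of the optimal $\inf v$ when $\beta$ is large. Two ingredients from that proof are reused: the test function upper bound
\[
\sigbrb \,\le\, \Eund(v_{m,T_m},\phi_{T_m}) \,=\, \sqrt{2}\lt(\tfrac{2}{3}+\Psi(m)\rt), \qquad m\in(0,1),
\]
and the lower bound
\[
\Eund(v,\phi) \,\ge\, \sqrt{2}\lt(\tfrac{2}{3}-m+\tfrac{m^{3}}{3}\rt) + \tfrac{\beta^{1/2}m^{3}}{2}
\]
valid for any admissible $(v,\phi)$ with $m=\inf v$.

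For the \emph{upper bound} I would substitute $m=c\beta^{-1/4}$ into $\Psi$. Since $(1-m^{2})^{2}\to 1$ and $\tfrac{\beta}{4}m^{4}=c^{4}/4$ as $\beta\to\infty$,
\[
\Psi(c\beta^{-1/4}) \,=\, c\beta^{-1/4}\lt(\tfrac{\pi}{4}\sqrt{1+c^{4}/4}-1\rt) + O(\beta^{-3/4}).
\]
Any $c\in(0,1)$ with $c^{4}<64/\pi^{2}-4$ (e.g.\ $c=1$) makes the bracket a fixed strictly negative constant, so there exist $B,c_{0}>0$ with $\sigbrb\le \sqrt{2}\bigl(\tfrac{2}{3}-c_{0}\beta^{-1/4}\bigr)$ for every $\beta\ge B$.

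For the \emph{lower bound} I would show that if $m=\inf v$ lies outside $[\beta^{-1/4}/C,\,C\beta^{-1/4}]$ then $\Eund(v,\phi)>\sqrt{2}(\tfrac{2}{3}-c_{0}\beta^{-1/4})\ge \sigbrb$, so such $(v,\phi)$ cannot appear in a minimising sequence. If $m\le \beta^{-1/4}/C$ with $C>1/c_{0}$, only the first term of the lower bound is needed and gives $\Eund(v,\phi)\ge \sqrt{2}(\tfrac{2}{3}-\tfrac{1}{C}\beta^{-1/4})$. If $m\ge C\beta^{-1/4}$ with $C\ge 2^{3/4}$, the inequality $m^{2}\ge C^{2}\beta^{-1/2}$ yields $\beta^{1/2}m^{3}\ge C^{2}m\ge 2\sqrt{2}\,m$, so $\tfrac{\beta^{1/2}m^{3}}{2}-\sqrt{2}m\ge 0$ and the lower bound is already at least $\sqrt{2}\cdot\tfrac{2}{3}$. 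Choosing $C=\max(2^{3/4},\,1/c_{0})+1$ covers both cases, and any offending $(v,\phi)$ is replaced in the infimum by $(v_{c\beta^{-1/4},T_{c\beta^{-1/4}}},\phi_{T_{c\beta^{-1/4}}})$, whose $\inf v$ lies in $[\beta^{-1/4}/C,C\beta^{-1/4}]$.

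The only delicate point is spotting the correct scale: one must observe that the $-m$ term (the gain in potential energy from dipping below $v=1$) balances the $\beta^{1/2}m^{3}$ term (the penalty from the $\phi$-gradient--sine coupling paid over the transition region) precisely when $m\sim \beta^{-1/4}$. Everything else is the algebraic comparison of the two bounds described above.
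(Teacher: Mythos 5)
Your argument is correct and is essentially the paper's own proof: both quantify the two bounds from Proposition \ref{away} (the test-function upper bound via $\Psi(m)$ with $m\sim\beta^{-1/4}$, and the pointwise lower bound in terms of $m=\inf v$) and compare them in the two regimes $m\le \beta^{-1/4}/C$ and $m\ge C\beta^{-1/4}$, with the same threshold $C\ge 2^{3/4}$ in the large-$m$ case and the same admissibility condition $c^{4}<64/\pi^{2}-4$ in the upper bound. The paper packages this as the set $\mathcal{M}=\{m:\ m^{3}(\tfrac13+\tfrac{\beta^{1/2}}{2\sqrt2})-m>\Psi(\ov m)\}$, but the content is identical.
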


\begin{proof}
 Let $\mathcal{M}:=\{m\in[0,1] :  m^3\left(\frac{1}{3}+ \frac{ \beta^{1/2}}{2\sqrt{2}}\right)-m>\Psi(\ov m)\}$ then arguing as in the previous proof, we obtain
\[ \sigbrb=\inf\left\{ \Eund(v,\p) \,  :\,   \inf v\notin \mathcal{M}, \, \, \lim_{-\infty} \p= 0  \textrm{ and }  \, \lim_{+\infty} \p =\pi \right\}.\]
The claim is thus proven provided we can show that for $\beta$ large enough, and for $m \in [0,1]$ such that $m\le \frac{1}{C}\beta^{-1/4}$ or $m\ge  C \beta^{-1/4}$ then $m\in \mathcal{M}$.
 We notice first that if $\beta$ is large then if $m\ge  C \beta^{-1/4}$, $m^3\left(\frac{1}{3}+ \frac{ \beta^{1/2}}{2\sqrt{2}}\right)-m>0>\Psi(\ov m)$ hence $m\in \mathcal{M}$.  
Taking $m=\tilde m \beta^{-1/4}$ with $0<\tilde m< \left(4\left[\frac{16-\pi^2}{\pi^2}\right]\right)^{1/4}$ so that  $\frac{1}{4} \pi \lt((1-{m}^2)^2 +\frac{\beta}{4} {m}^4\rt)^{1/2}<1$, we obtain $\Psi(\ov m)\le - \frac{1}{C}\beta^{-1/4}$
 and therefore, for $m\le \frac{1}{C} \beta^{-1/4}$, we have  $m^3\left(\frac{1}{3}+ \frac{ \beta^{1/2}}{2\sqrt{2}}\right)-m>\Psi(\ov m)$, that is $m\in \mathcal{M}$.
\end{proof}

We can now prove the existence of an optimal profile.
\begin{prop}\label{existbeta}
 For every  $\beta>0$ there exists a minimiser of $\sigbrb$. Moreover, it  is smooth and satisfies the Euler-Lagrange equations
\begin{eqnarray}
 -v''-(1-v^2)v+\frac{1}{4}v \p'^2+\frac{\beta}{2}  v^3 \sin^2\p&=&0  \label{ELv} \\
 \quad -(v^2\p')'+\beta v^4 \sin \p \cos \p&=&0 \label{ELphi} \,.
\end{eqnarray}
\end{prop}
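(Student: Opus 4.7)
I apply the direct method of the calculus of variations, the essential input being the lower bound $v\ge m^*(\beta)>0$ provided by Proposition \ref{away}. Pick a minimising sequence $(v_n,\p_n)$ for $\sigbrb$ and, using Proposition \ref{away}, assume $m^*\le v_n\le 1$ pointwise. Each $\p_n$ lies in $H^1_{\mathrm{loc}}(\R)$, hence is continuous, and interpolates from $0$ to $\pi$ at infinity; a translation brings us to $\p_n(0)=\pi/2$. The uniform bound $\Eund(v_n,\p_n)\le\sigbrb+o(1)$ combined with $v_n\ge m^*$ gives uniform control of $\int v_n'^2$, $\int \p_n'^2$, $\int(1-v_n^2)^2$ and $\int\sin^2\p_n$. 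Rellich's theorem together with Arzel\`a--Ascoli extracts a subsequence converging locally uniformly and weakly in $H^1_{\mathrm{loc}}(\R)$ to a pair $(v,\p)$ with $v\in[m^*,1]$, $\p\in[0,\pi]$ and $\p(0)=\pi/2$; weak lower semicontinuity of the gradient terms and Fatou's lemma on the nonnegative potential terms give $\Eund(v,\p)\le\sigbrb$.

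Next I check that $(v,\p)$, or a modification of it, is admissible. Passing the $L^1$-bounds to the limit, the continuity of $v$ and $\p$ forces $v(\pm\infty)=1$ and $\p(\pm\infty)\in\{0,\pi\}$. The pair $(v,\p)$ is admissible when $(\p(-\infty),\p(+\infty))=(0,\pi)$, and by the change of variable $t\mapsto -t$ also when it equals $(\pi,0)$. The remaining degenerate cases $(0,0)$ and $(\pi,\pi)$ I repair by reflection: for instance if both ends equal $0$, selecting the half-line of smaller energy I may assume $\Eund(v,\p;(-\infty,0])\le \tfrac12\sigbrb$, and set
\[
(\tilde v,\tilde\p)(t):=\begin{cases}(v(t),\p(t)),& t\le 0,\\ (v(-t),\pi-\p(-t)),& t\ge 0.\end{cases}
\]
The pair is continuous at $0$ because $\p(0)=\pi/2$, and since $\sin^2(\pi-\p)=\sin^2\p$, a change of variable gives $\Eund(\tilde v,\tilde\p)=2\,\Eund(v,\p;(-\infty,0])\le\sigbrb$, with boundary values now equal to $0$ and $\pi$. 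An admissible minimiser therefore exists in all cases. This admissibility step is the main obstacle, because translation invariance allows the minimising sequence to lose information at infinity, and because $v$ and $\p$ are coupled so that a naive monotone rearrangement of $\p$ is not available.

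For the Euler--Lagrange equations, let $(v,\p)$ be such a minimiser. The constraint $v\ge 0$ is inactive since $v\ge m^*>0$, so arbitrary smooth compactly supported variations of $v$ produce \eqref{ELv} weakly. For $\p$, testing against $C^\infty_c$ variations supported in $\{0<\p<\pi\}$ gives \eqref{ELphi} there; the standard uniqueness theorem for the second-order ODE \eqref{ELphi}, whose coefficients are locally Lipschitz thanks to $v\ge m^*>0$, rules out $\p$ attaining $0$ or $\pi$ at any finite point (otherwise $\p$ and $\p'$ would both vanish at a local extremum, forcing $\p\equiv 0$ or $\p\equiv\pi$ in contradiction with the behaviour at infinity), so \eqref{ELphi} holds on all of $\R$. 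A classical bootstrap on the ODE system \eqref{ELv}--\eqref{ELphi} then promotes $v,\p$ from $H^1_{\mathrm{loc}}$ to $C^\infty(\R)$.
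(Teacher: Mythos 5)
Your proof is correct and follows essentially the same route as the paper: the direct method with the normalisation $\p_n(0)=\pi/2$, the lower bound $v\ge m^*(\beta)$ from Proposition \ref{away} to control $\int\p_n'^2$, locally uniform convergence plus lower semicontinuity, the reflection trick to repair the degenerate boundary cases $(0,0)$ and $(\pi,\pi)$, and a bootstrap on the Euler--Lagrange system for smoothness. Your treatment of the one-sided constraint $0\le\p\le\pi$ via ODE uniqueness is a bit more careful than the paper's (which passes directly to the integrated Euler--Lagrange equations), but it is the same argument in substance.
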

\begin{proof}
 Let $(v_n,\p_n)$ be a minimising sequence. Up to translation, we can assume that $\p(0)=\frac{\pi}{2}$. Let us notice that up to truncating $v_n$, we can also 
assume that $v_n\in[0,1]$. Therefore, since $v'_n$ is uniformly bounded in $L^{2}(\R)$, up to extraction, the sequence $v_n$ converges 
locally uniformly to some continuous function $v$. Moreover, by lower semicontinuity,
\[\int_\R v'^2+\Wr(v) dt\le \varliminf_{n\to +\infty} \int_\R v_n'^2+\Wr(v_n) dt.\]
Since 
\[\int_\R( 1-v)^2 dt \le \int_\R (1-v^2)^2 dt \le C \qquad \textrm{and} \qquad \int_{\R} (1-v)'^2\le C,\]
the function $(1-v)$ is in $H^{1}(\R)$ and therefore $\lim_{\pm \infty} (1-v)=0$, i.e.   $\lim_{\pm \infty} v=1$. Thanks to Proposition \ref{away}, $\inf v_n\ge m^*$ from which 
we obtain that $\p'_n$ is bounded in $L^2(\R)$ and thus $\p_n$ also converges locally uniformly to some continuous function $\p$ with $\p(0)=\frac{\pi}{2}$. By lower semicontinuity, there holds
\[ \int_\R \frac{1}{4} v^2 \p'^2 +\frac{\beta}{4} v^4  \sin^2 \p \, dt\le \varliminf_{n\to +\infty} \int_\R \frac{1}{4} v_n^2 \p_n'^2 +\frac{\beta}{4} v_n^4  \sin^2 \p_n \, dt. \]
 
Since $\sin^2 \phi \in H^1(\R)$, the function $\sin^2 \phi$ converges to $0$  both at  plus and minus infinity so that $\phi$ has a limit at infinity which is either $0$ or $\pi$. Moreover, since $\phi(0)=\frac{\pi}{2}$ we see that $\phi$ cannot be constantly equal to $0$ or $\pi$ on $\R$. If $\lim_{x\to -\infty} v(x)=\lim_{x\to+\infty} v(x)$ then
assuming that
\[ \int_{-\infty}^0 v'^2+\Wr(v)+\frac{1}{4} v^2 \p'^2 +\frac{\beta }{4} v^4 \sin^2 \p \, dt\le \int_0^{+\infty} v'^2+\Wr(v)+\frac{1}{4} v^2 \p'^2 +\frac{\beta}{4} v^4  \sin^2 \p \, dt\]
and setting 
\[\tilde v(x):=\begin{cases}
                   v(x) & x<0\\
		  v(-x) & x\ge 0
                  \end{cases}
\quad \textrm{ and } \quad
\tilde \p(x) := \begin{cases}
                   \p(x) & x<0\\
		  \pi-\phi(-x) & x\ge 0,
                  \end{cases}
\]
we see that $\Eund(\tilde v,\tilde \phi)\le \Eund(v,\p)$ and up to symmetrising again, $\lim_{x\to -\infty} \tilde \p =0$ and $\lim_{x\to +\infty} \tilde \p =\pi$ so that $(\tilde v, \tilde \p)$ is a minimiser of $\Eund$. \\

From the integrated form of the Euler-Lagrange equations we see that $(v^2 \p')$ is in $H^1_{\loc}(\R)$ with derivative equal to  $\beta v^4 \sin \p \cos \p$ which is continuous.
 Hence, $v^2\p'\in C^1$ which implies (by continuity of $v$) that $\p'\in C^0$ and thus $\p\in C^1$. From this, we can use the first equation to infer higher regularity of $v$ and then a simple bootstrapping argument gives the smoothness of $(v,\p)$.
 
\end{proof}
\begin{rk}\rm
 Arguing as in \cite{AmbTort}, we could have obtained the existence of an optimal profile even without using the fact that $\inf v>0$.
\end{rk}

We can now study some qualitative properties of the minimisers of $\Eund$ at fixed $\beta>0$.
\begin{prop}\label{propertymin}
For every minimising pair  $(v,\p)$ of $\Eund$, the function $\p$ is increasing.  Moreover there exists a minimising pair $(v,\p)$ such
 that $\p(-t)=\pi-\p(t)$ and $v(-t)=v(t)$, $v$ is increasing on $\R^+$, $\p$ is convex on $\R^-$  and concave on $\R^+$. For every minimising function $v$, the minimiser of
\begin{equation}\label{uniqv}\min \left\{\int_\R \frac{1}{4} v^2 \p'^2 +\frac{\beta}{4} v^4  \sin^2 \p \, dt \, : \,\lim_{-\infty} \p= 0  \textrm{ and }  \, \lim_{+\infty} \p =\pi \right\}   \end{equation}
is unique and vice-versa, for every admissible $\p$, the minimiser of 
\begin{equation}\label{uniqphi}\min \left\{ \int_\R v'^2 + \Wr(v)+\frac{1}{4} v^2 \p'^2 +\frac{\beta}{4} v^4  \sin^2 \p \, dt \right\}\end{equation}
is unique. Finally, for every minimising pair $(v,\p)$, there is equipartition of the energy in the sense that
\begin{equation}\label{equiener}
 v'^2+\frac{1}{4} v^2 \p'^2  \,=\Wr(v)+\frac{1}{4} \beta v^4  \sin^2 \p.
\end{equation}

\end{prop}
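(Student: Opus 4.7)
I plan to treat the five statements in an order that lets each build on the earlier ones: equipartition first (from autonomy of the Lagrangian), then uniqueness of $v$ for fixed $\phi$ (by strict convexity), then existence of a symmetric minimizer (by a reflect-and-glue argument), and finally monotonicity of $\phi$ together with the remaining uniqueness statement.

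\textbf{Equipartition and uniqueness of $v$.} Since the Lagrangian $\mathcal{L}=\tfrac12[v'^2+\Wr(v)+\tfrac14 v^2\phi'^2+\tfrac\beta4 v^4\sin^2\phi]$ does not depend explicitly on $t$, the Hamiltonian
\ben H=v'\partial_{v'}\mathcal{L}+\phi'\partial_{\phi'}\mathcal{L}-\mathcal{L}=\tfrac12\bigl[v'^2+\tfrac14 v^2\phi'^2-\Wr(v)-\tfrac\beta4 v^4\sin^2\phi\bigr] \een
is conserved along solutions of \eqref{ELv}-\eqref{ELphi}. Smoothness (Proposition \ref{existbeta}), finiteness of the energy, and the asymptotics $v\to 1$, $\phi\to 0,\pi$ at $\mp\infty$ force $v',\phi'$ and the potential terms to vanish at infinity, so $H\equiv 0$, which is exactly \eqref{equiener}. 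For \eqref{uniqphi}, substitute $u:=v^2$, so that $v'^2=(u')^2/(4u)$ and the functional becomes
\ben \int_\R \tfrac{(u')^2}{4u}+\tfrac{(1-u)^2}{2}+\tfrac{u\phi'^2}{4}+\tfrac{\beta u^2\sin^2\phi}{4}\,dt. \een
On $\{u>0\}$ the kinetic density $(u,p)\mapsto p^2/(4u)$ is convex (as the perspective of $p\mapsto p^2$), $u\phi'^2/4$ is affine in $u$, $\beta u^2\sin^2\phi/4$ is convex, and $(1-u)^2/2$ is strictly convex; the total functional is therefore strictly convex in $u$, yielding uniqueness.

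\textbf{Symmetric minimizer and geometric consequences.} The functional $\Eund$ and the boundary conditions are invariant under the involution $(v,\phi)(t)\mapsto(v(-t),\pi-\phi(-t))$. Given any minimizer $(v,\phi)$, translate so that $\phi(0)=\pi/2$, let $E^\pm$ denote the contribution to $\Eund(v,\phi)$ from $\R^\pm$, and assume without loss of generality $E^+\le E^-$. Then
\ben v^s(t):=v(|t|),\quad \phi^s(t):=\phi(t) \text{ for } t\ge 0,\quad \phi^s(t):=\pi-\phi(-t) \text{ for } t<0 \een
defines an admissible pair (continuous at $0$ thanks to $\phi(0)=\pi/2$) with energy $2E^+\le E^++E^-$. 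Minimality forces equality, so $(v^s,\phi^s)$ is itself a minimizer and hence smooth by Proposition \ref{existbeta}; matching derivatives at $0$ gives $v'(0)=0$ and the claimed symmetries. Rewriting \eqref{ELphi} as $\phi''=\tfrac\beta2 v^2\sin(2\phi)-2(v'/v)\phi'$, once $\phi$ is known to be monotone on $\R$ (next paragraph) we have on $\R^+$ that $\phi\in[\pi/2,\pi]$ so $\sin(2\phi)\le 0$, and $v',\phi'\ge 0$; hence $\phi''\le 0$, i.e.\ $\phi$ is concave on $\R^+$ and, by the odd symmetry of $\phi''$, convex on $\R^-$. Ruling out interior critical points of $v$ on $\R^+$ via \eqref{ELv} combined with the equipartition then yields $v$ non-decreasing on $\R^+$.

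\textbf{Monotonicity of $\phi$ and uniqueness of $\phi$.} Suppose for contradiction that some minimizer $(v,\phi)$ has $\phi$ non-monotone; then there exist $t_1<t_2$ with $\phi(t_1)=\phi(t_2)=c\in(0,\pi)$ and $\phi\not\equiv c$ on $(t_1,t_2)$. Excising $(t_1,t_2)$ and shifting the right piece of $\phi$ produces a continuous phase $\tilde\phi$ on $\R$ with the correct limits; let $\tilde v$ be the unique minimizer of \eqref{uniqphi} at $\tilde\phi$. The original $(v,\phi)$ contributes a strictly positive amount $\int_{t_1}^{t_2}\tfrac18 v^2\phi'^2+\tfrac\beta8 v^4\sin^2\phi\,dt>0$ on the excised interval, while a carefully chosen admissible competitor $v_*$ for $\tilde\phi$ (obtained by interpolating $v$ linearly across the seam on a length scale comparable to $t_2-t_1$) shows the $v$-matching cost is dominated by this saving; hence $\Eund(\tilde v,\tilde\phi)\le \Eund(v_*,\tilde\phi)<\Eund(v,\phi)$, contradicting minimality. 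This quantitative book-keeping is the main technical obstacle: because $v(t_1)\neq v(t_2)$ in general the naive gluing destroys continuity of $v$, so the re-optimization via the first paragraph together with the strict positivity of the $\phi$-dependent contribution on the excised interval are both essential. For uniqueness in \eqref{uniqv}, once $\phi$ is monotone one reparametrizes $v$ as $w(\phi):=v(t(\phi))$; the reduced one-dimensional problem in $w$ is strictly convex in $w^2$ (same argument as in the first paragraph), which gives uniqueness of $\phi$ for fixed $v$.
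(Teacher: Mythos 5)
Your treatment of the equipartition (conservation of the Hamiltonian for the autonomous Lagrangian), of the uniqueness of $v$ for fixed $\p$ (strict convexity after the substitution $u=v^2$), and of the existence of a symmetric minimiser (translate so that $\p(0)=\pi/2$, reflect the cheaper half and glue) coincides with the paper's proof. The remaining points, however, contain genuine gaps. First, your proof that $\p$ is monotone rests on the assertion that, after excising $(t_1,t_2)$, the cost of repairing the resulting jump of $v$ at the seam is dominated by the excised energy; you yourself flag this ``quantitative book-keeping'' as the main obstacle and do not carry it out, and it is not routine: the interpolant for $v$ also changes the coupling terms $\frac14 v^2\p'^2+\frac\beta4 v^4\sin^2\p$ near the seam, and nothing a priori controls the matching cost for the jump $|v(t_2)-v(t_1)|$ by the saving $\int_{t_1}^{t_2}\frac18 v^2\p'^2+\frac\beta8 v^4\sin^2\p\,dt$ alone. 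The paper avoids this entirely: if $t^-<t^+$ denote the first and last points where $\p=\pi/2$, reflecting the cheaper tail across the corresponding crossing point produces an admissible competitor which drops the strictly positive energy of $(t^-,t^+)$ (recall $v\ge m^*>0$ and $\sin^2\p(t^\pm)=1$), contradicting minimality; hence $\p$ crosses $\pi/2$ exactly once, so $\p\ge\pi/2$ on $\R^+$, and then \eqref{ELphi} shows $v^2\p'$ is non-increasing on $\R^+$ while $\p\to\pi$ forces $\p'\ge0$ at arbitrarily large points, whence $\p'\ge0$ on $\R^+$, and symmetrically on $\R^-$. You should adopt this route.

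Second, the claim that $v$ is increasing on $\R^+$ is not proved: ``ruling out interior critical points of $v$ via \eqref{ELv} combined with the equipartition'' is not an argument --- at a critical point of $v$, \eqref{equiener} only gives $\frac14 v^2\p'^2=\Wr(v)+\frac\beta4 v^4\sin^2\p$, which is perfectly consistent. The paper instead replaces $v$ on $\R^+$ by its increasing rearrangement and $\p'$ by the primitive of its decreasing rearrangement, and uses the rearrangement inequality $\int_{\R^+} f^*g_*\,dt\le\int_{\R^+} fg\,dt$ to show the energy does not increase; this produces a (possibly different) minimiser with the stated monotonicity and concavity, which is all the proposition claims. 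Your concavity argument for $\p$ on $\R^+$ via $\p''=\frac{\beta}{2} v^2\sin(2\p)-2(v'/v)\p'$ is correct but presupposes $v'\ge0$, so it inherits this gap. Finally, your last sentence on uniqueness in \eqref{uniqv} is not coherent: reparametrising $v$ by $\p$ and invoking convexity in $w^2$ would at best give uniqueness of $v$, not of $\p$. The paper's argument is to set $\psi:=\sin\p$, observe that $\int_\R \frac14 v^2\frac{\psi'^2}{1-\psi^2}+\frac\beta4 v^4\psi^2\,dt$ is strictly convex in $\psi$, deduce uniqueness of $\sin\p$, and then recover uniqueness of $\p$ itself from the already-established monotonicity of $\p$ for minimising pairs (which resolves the non-injectivity of $\sin$ on $[0,\pi]$).
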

\begin{proof}
 Let $(v,\p)$ be a minimising pair of $\Eund$ and let us prove that $\p$ is increasing. Let $t^-$ be the first point such that $\p(t)=\frac{\pi}{2}$ and  similarly, let $t^+$ be the last point such that $\p(x)=\frac{\pi}{2}$. If $t^-\neq t^+$ then assuming that 
\[\int_{-\infty}^{t^-}  v'^2 + \Wr(v)+\frac{1}{4} v^2 \p'^2 +\frac{\beta}{4} v^4  \sin^2 \p \, dt\ge \int_{t^+}^{+\infty}v'^2 + \Wr(v)+\frac{1}{4} v^2 \p'^2 +\frac{\beta}{4} v^4  \sin^2 \p \, dt,\] 
letting 
\[\tilde v(t):=\begin{cases}
                   v(t+t^+) & t\ge0\\
		  v(t^+-t) & t\le 0
                  \end{cases}
\quad \textrm{ and } \quad
\tilde \p(t) := \begin{cases}
                   \p(t+t^+) & t\ge 0\\
		  \pi-\phi(t^+-t) & t\le 0,
                  \end{cases}
\]
there holds $\Eund(\tilde v,\tilde \phi)<\Eund(v,\p)$ which gives a contradiction. From this, we see that $\p$  can take the value $\frac{\pi}{2}$ in only one point which up to translation can be assumed to be $0$. From this, it follows that $\phi>\frac{\pi}{2}$ in $\R^+$ hence from \eqref{ELphi}, we see that $v^2 \phi'$ is decreasing in $\R^+$. Since $\lim_{x\to +\infty} \phi(x)=\pi$ and $\phi(x)\le \pi$, there must be arbitrarily large $x$ such that $\phi'(x)\ge 0$ from which we infer that $\phi'$ is non-negative in $\R^+$. Similarly we can prove that $\phi'$ is also non-negative in $\R^-$.
   Let us  notice that the  symmetrisation made above,  constructed  a minimising pair 
$(\tilde v,\tilde \p)$ which satisfies $\tilde \p(-t)=\pi-\tilde \p(t)$ and $\tilde v(-t)=\tilde v(t)$.  From now on, let us drop the tildes for the sake of clarity and assume that $(v,\p)$ is a symmetric minimising pair.
%

 Let us now prove that we can further modify $ v$, respectively  $\p$, on $\R^+$ and get an increasing,   respectively a concave, function on $\R^+$ while decreasing the energy.
For this, we use standard rearrangement techniques (see \cite{liebloss}). For a function $f$ vanishing at infinity, let us denote by $f^*$ its decreasing rearrangement (see \cite{liebloss}). Analogously, for a function $g$ with limit $\alpha$ at infinity let us denote by $g_*$ its
 increasing rearrangement i.e. $f_*:=\alpha-(\alpha-f)^*$. From \cite[Th. 3.4]{liebloss}, we see that for two nonnegative functions $f$ and $g$ such that $f$ vanishes at infinity and $g$ has a limit at infinity, there holds
\[\int_{\R^+} f^*g_* \, dt \le \int_{\R^+} f g \, dt.\]  
Consider now $v_*$ the increasing rearrangement of $v$ then $\Wr(v_*)=\Wr(v)^*$, $(v^2)_*=(v_*)^2$, $(v^4)_*=(v_*)^4$ and $\displaystyle \int_{\R^+} v_*'^2 dt\le \int_{\R^+} v'^2 dt$.  Let finally $\widetilde \p:= \ds\frac{\pi}{2} +\int_0^x (\p')^*(t) dt$ be the primitive of the decreasing rearrangement of $\phi'$. 
Notice that $\widetilde \p$ is increasing and concave and for $x\in \R^+$, there holds
\begin{align*}
 \widetilde \p(x)&=\frac{\pi}{2}+\int_0^x (\p')^*(t) dt= \frac{\pi}{2}+\int_{\R^+} (\p')^*(t) \car{[0,x]} (t)dt\\
&=\frac{\pi}{2}+\int_{\R^+} (\p')^*(t) \car{[0,x]}^* (t)dt \ge \frac{\pi}{2}+\int_{\R^+} \p'(t) \car{[0,x]} (t)dt\\
&\ge\p(x),
\end{align*}
from which $\sin^2(\widetilde \p(x))\le \sin^2(\p(x))$ and by symmetry the same inequality holds in $\R^-$. From this, we infer that $\ds\int_{\R} (v_*)^4 \sin^2 (\widetilde \p) dt \le \int_{\R} v^4 \sin^2  \p dt$ and 
$\ds\int_{\R} (v_*)^2 (\widetilde \p')^2 dt \le \int_{\R} v^2 ( \p')^2 dt $. Putting all this together, we find that
\[\Eund(v_*,\widetilde \p)\le \Eund(v,\p).\]
Let  $v$ be a fixed minimising  function and let us prove that the minimiser of \eqref{uniqv} is unique. For this we use an observation of \cite{CapMelOt} (see also \cite{CherMur}) and let $\psi:= \sin \p$. The functional takes then the form
\[\int_\R  \frac{1}{4} v^2 \frac{\psi'^2}{1-\psi^2} +\frac{\beta }{4} v^4 \psi^2 \, dt\]
which is a strictly convex functional in $\psi$. From this we deduce that $\sin \p$ is unique and since $(v,\p)$ is minimising $\Eund$, the function $\p$ is increasing from which we infer that $\p$ is also unique. \\
Similarly, if $\p$ is any admissible function, then using the celebrated Brenier trick in optimal transportation, we let $w:= v^2$ and notice that the functional  can now be written as
\[\int_\R \frac{w'^2}{w} +\frac{1}{2}(1-w)^2+\frac{1}{4} w \p'^2 +\frac{\beta}{4} w^2  \sin^2 \p \, dt \]
which is strictly convex in $w$. Hence, $w$ is unique from which it follows that $v$ is also unique.

Finally,  the equipartition of the energy \eqref{equiener} follows simply by differentiating for instance the right handside and then using \eqref{ELv} and \eqref{ELphi}.
\end{proof}
\begin{rk}\rm
 If $v$ is any admissible function we cannot in general infer  that a minimising $\p$ of \eqref{uniqv} is increasing. In this case, we can however still conclude that $\sin \p$ is unique. 
\end{rk}

\begin{rk}\rm
 The uniqueness of the minimising pairs $(v,\p)$ seems to be a difficult question. Let us notice that the functional
\[\int_\R \frac{w'^2}{w} +\frac{1}{2}(1-w)^2+\frac{1}{4} w \frac{\psi'^2}{1-\psi^2} +\frac{\beta}{4} w^2  \psi^2 \, dt\]
 is not convex in $(w,\psi)$. Moreover, due to the non monotonicity of $v$, the sliding technique  (see \cite{BereTeraWaWei}) seems to be difficult to use here.
 We also mention that using the change of variables in (\ref{chvarinv}) with $\eta_\eps$ replaced by $ \sqrt{\rho}$, the uniqueness of the minimising pair $(v,\p)$
 would be  equivalent to the uniqueness of minimising pairs of  
 
 \ben
	\frac12 \int_\R u_1'^2+u_2'^2+ \frac12 (u_1^2+u_2^2-\rho)^2 + \beta u_1^2u_2^2 
\een
 
with constraints

 \ben
	\lim_{+\infty}  u_1 = \lim_{-\infty}  u_2 = \rho  \quad \text{ and } \quad  \lim_{-\infty}  u_1 = \lim_{+\infty}  u_2 = 0 \,.
\een


\end{rk}

\section{$\Gamma$-convergence of $\Feb$ for $\beta>0$}\label{Sec:gammacv}

In this section we study the $\Gamma-$convergence of the functionals $\eps\Feb$ as $\eps\to 0$ and prove Theorem \ref{gammavctheo}.

\subsection{Lower bound and compactness}

We start by proving the compactness of sequences with bounded energy.

\begin{prop}  \textbf{(Compactness)}
	Let $(v_\eps,\p_\eps)\in Y(\D)$ be a sequence of functions  such that 

\be \label{boundenergy2d}
	\sup_{\eps >0} \eps \Feb(v_\eps,\p_\eps) < \infty \,.
\ee	
	
	 Then, as $\eps \to 0$, 
	 
\ben
	(v_\eps,\p_\eps) \to (v,\p) \quad \text{in}  \quad L^1_{loc}(\D) \times L^1_{loc}(\D) \,,
\een
where $v=1$ a.e.\ in $\D$ and $\p \in BV_{loc}(\D;\{0,\pi\})$. Moreover, if $(v_\eps,\p_\eps)$ satisfy the mass constraint (\ref{massvphi}), then $\phi$ satisfies (\ref{mcfinal}). 
\end{prop}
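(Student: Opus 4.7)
My plan is to extract three convergences from the bound $\eps\Feb(v_\eps,\p_\eps)\le C$: $v_\eps\to 1$ in $L^2_\loc(\D)$ from the Cahn--Hilliard part; $\sin^2\p_\eps\to 0$ in $L^1_\loc(\D)$ from the last term; and a weighted $BV$-bound on $1-\cos\p_\eps$ which, after correcting for the possible degeneracy of $v_\eps$, forces $\p\in BV_\loc(\D;\{0,\pi\})$. The mass constraint will then follow from the total-mass identity.

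The first two convergences are standard. The two $v$-terms of $\eps\Feb$ yield $\int\eta_\eps^4(1-v_\eps^2)^2\,dx\le C\eps$, and combined with $\eta_\eps\ge c_K/2$ on any compact $K\subset\D$ (from \eqref{localC0cvetaepsrho} and $\rho\ge c_K>0$) this gives $\int_K(1-v_\eps^2)^2\,dx=O(\eps)$. Since $v_\eps\ge 0$, I conclude $v_\eps\to 1$ in $L^2_\loc(\D)$ and in particular $|\{v_\eps<1/2\}\cap K|=O(\eps)$. Combining with the last $\p$-term of $\eps\Feb$ (which, after multiplication by $\eps$, gives $\int\eta_\eps^4 v_\eps^4\sin^2\p_\eps\,dx\le C\eps$) and splitting $K$ at $\{v_\eps\ge 1/2\}$ and its complement, I get $\int_K\sin^2\p_\eps\,dx\to 0$; hence $\p_\eps$ approaches $\{0,\pi\}$ in measure.

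The central step is the $BV$-bound. Applying Young's inequality to the two $\p$-terms of $\eps\Feb$ gives
\[\int \tfrac{\sqrt{\beta}}{2}\eta_\eps^3 v_\eps^3|\sin\p_\eps||\nabla\p_\eps|\,dx\le\eps\Feb\le C,\]
i.e., a weighted bound on $|\nabla(1-\cos\p_\eps)|$; analogously the $v$-terms give $\int\eta_\eps^3|\nabla v_\eps||1-v_\eps^2|\,dx\le C$. To bypass the degeneracy of the weight $v_\eps^3$, I would fix a smooth cutoff $\chi:[0,\infty)\to[0,1]$ with $\chi\equiv 0$ on $[0,1/4]$ and $\chi\equiv 1$ on $[3/4,\infty)$ and consider $\Psi_\eps:=\chi(v_\eps)(1-\cos\p_\eps)$. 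Then
\[|\nabla\Psi_\eps|\le \chi(v_\eps)|\sin\p_\eps||\nabla\p_\eps|+2|\chi'(v_\eps)||\nabla v_\eps|.\]
On $\{\chi(v_\eps)>0\}$ one has $v_\eps\ge 1/4$, so the first term is $\le 64\,v_\eps^3|\sin\p_\eps||\nabla\p_\eps|$, controlled by the first Young bound together with $\eta_\eps\ge c_K$ on $K$. For the second, $\chi'$ is supported in $[1/4,3/4]$, where $|1-v^2|\ge 7/16$, so $|\chi'(v_\eps)|\le (16/7)\|\chi'\|_\infty|1-v_\eps^2|$, and the second Young bound takes care of it. Thus $\sup_\eps\|\Psi_\eps\|_{BV(K)}<\infty$. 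By Rellich compactness, up to a subsequence, $\Psi_\eps\to\Psi$ in $L^1_\loc(\D)$ with $\Psi\in BV_\loc(\D)$. Since $\chi(v_\eps)\to 1$ a.e.\ (as $v_\eps\to 1$ a.e.\ and $\chi(1)=1$) and $1-\cos\p_\eps\in[0,2]$, dominated convergence gives $1-\cos\p_\eps\to\Psi$ in $L^1_\loc(\D)$; using $\sin^2\p_\eps\to 0$ a.e.\ on a further subsequence, $\Psi\in\{0,2\}$ a.e., so setting $\p:=\pi\,\mathbf{1}_{\{\Psi=2\}}$ yields $\Psi=1-\cos\p$ and $\mathbf{1}_{\{\p=\pi\}}=\tfrac12\Psi\in BV_\loc(\D)$, i.e.\ $\p\in BV_\loc(\D;\{0,\pi\})$; finally $\p_\eps\to\p$ in $L^1_\loc(\D)$ follows from $\cos\p_\eps\to\cos\p$ and continuity of $\cos^{-1}$ on $[-1,1]$.

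For the mass constraint, I would split $\int_{\R^n}\eta_\eps^2 v_\eps^2\cos\p_\eps\,dx$ at $\D_\delta:=\{x\in\D:\dist(x,\partial\D)>\delta\}$. On $\D_\delta$, the uniform convergence $\eta_\eps^2\to\rho$, the $L^1_\loc$-convergences $v_\eps^2\to 1$ and $\cos\p_\eps\to\cos\p$, and the uniform boundedness of $v_\eps^2$ on compacts give $\int_{\D_\delta}\eta_\eps^2 v_\eps^2\cos\p_\eps\,dx\to\int_{\D_\delta}\rho\cos\p\,dx$. The tail is controlled by the total-mass identity: $\int_{\R^n\setminus\D_\delta}\eta_\eps^2 v_\eps^2\,dx=1-\int_{\D_\delta}\eta_\eps^2 v_\eps^2\,dx\to 1-\int_{\D_\delta}\rho\,dx$, which vanishes as $\delta\to 0$ since $\int_\D\rho=1$. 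Letting $\eps\to 0$ and then $\delta\to 0$ gives $\int_\D\rho\cos\p\,dx=\alpha_1-\alpha_2$. The principal obstacle in the whole argument is the $BV$-step, where the cutoff $\chi$ is tailored to absorb the possible vanishing of $v_\eps$ on a set of vanishing measure.
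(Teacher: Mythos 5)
Your proof is correct and follows essentially the same strategy as the paper: extract $v_\eps\to 1$ and $\sin^2\p_\eps\to 0$ from the potential terms, then use Young's inequality to get a uniform $BV$ bound on an auxiliary function of $(v_\eps,\p_\eps)$ (the paper uses $(1-\cos\p_\eps)\,v_\eps^3(4/3-v_\eps)$ where you use the cutoff $\chi(v_\eps)(1-\cos\p_\eps)$, but both serve the identical purpose of absorbing the degeneracy of the weight $v_\eps^3$), and finally localize plus use the total-mass identity for the constraint. The only differences are these cosmetic choices of auxiliary function and of exhaustion ($\D_\delta$ versus a generic compact $K$).
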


\begin{proof} Let $K$ be an open set relatively compact  in $\D$. From (\ref{localC0cvetaepsrho}), there is $c=c(K)>0$ such that for $\eps$ small enough $\eta_\eps   > c  >0$ in $K$, so	 

\ban
	\int_{K}   |1-v_\eps |^2 + \beta v_\eps^4 \sin^2 \p_\eps     \leq \frac8{c^4} \,  \eps^2 \Feb(v_\eps,\p_\eps) =o_{\eps \to 0}(1)  \,.  
 \ean		 

Hence,  $v_\eps \to 1 $ in $L^2(K)$ and $\sin^2(\p_\eps) \to 0$ a.e. in $K$. We also observe that

\ben
	 \eps \Feb(v_\eps,\p_\eps) \geq  \frac{ c ^3}{4}   \int_{K}   |\nabla v_\eps|\, |1-v_\eps^2 |  +   v_\eps^3 \, |\nabla \p_\eps|  \, \sin\p_\eps    \geq c'(K)  \int_{K} |\nabla \psi(v_\eps,\p_\eps)| \,,
\een	 

where $\psi(s,t):=g(t)v^3(4/3-v)$ with $g(t):=\int_0^t \sin z \,dz=1- \cos t $. The functions $\psi(v_\eps,\p_\eps)$ are uniformly bounded in $BV(K)$,
 so  $\psi(v_\eps,\p_\eps) \to \psi_0$ in $L^1(K)$. We derive that  $g(\p_\eps) \to 3 \psi_0   $, which implies that $ \p_\eps \to \p = g^{-1} (3 \psi_0)\in L^1(K; \{0,\pi\})$, since $g$ is monotone and $\sin^2(\p_\eps) \to 0$.
 Then, since $\psi_0\in BV(K;\{0,2/3\})$, we obtain that $\phi\in BV(K; \{0,\pi\})$.  \\
 
 Finally, if $(v_\eps,\p_\eps)$ satisfy \eqref{massvphi}, then since $\int_\D \rho dx=1$, there is  $ r_K >0$ going to zero as $\text{dist}(K,\partial\D) \to 0$, 
such that $\int_{\D \setminus K} \rho dx =r_K$. Also, from  (\ref{localC0cvetaepsrho}) we have $\int_K \eta_\eps^2 v_\eps^2 \, dx \ - \ \int_K \rho dx= r_{\eps,K}=o_{\eps \to 0}(1)$. Combining these and $\int_{\R^n} \eta_\eps^2 v_\eps^2 dx=1$, 
we obtain $\lt|\int_{\R^n\backslash K} \eta_\eps^2 v_\eps^2 \cos \phi_\eps dx \rt|\le \int_{\R^n\setminus K} \eta_\eps^2 v_\eps^2  dx= r_K + r_{\eps,K} $, which yields   
\[ \lt|\int_K \rho \cos \phi dx -(\alpha_1-\alpha_2) \rt| =\lim_{\eps\to 0} \lt|\int_K \eta_\eps^2 v_\eps^2 \cos \phi_\eps dx-(\alpha_1-\alpha_2) \rt|= \lim_{\eps\to 0} \lt|\int_{\R^n\backslash K} \eta_\eps^2 v_\eps^2 \cos \phi_\eps dx \rt|   \leq  r_K\] 
and finishes the proof.  
   \end{proof}
 
In order to apply the slicing method we need to define the one dimensional restriction of the energy. For this we recall that for $A$ an open set of $\D$, $x\in A$ and $\nu\in \Sn$, we set $A_{\nu x} := \{ t \in \R \,;\, x+t\nu \in A\}$.
For $(v,\p) \in Y( A_{\nu x})$, we  define the one dimensional  energy 

\ban 
	\Feb(v,\p\,;A_{\nu x}) &:=&  \frac{1}{2}\int_{A_{\nu x}} \eta_{\nu x,\eps}^2 v'^2 +  \frac1{2\eps^2} \eta_{\nu x,\eps}^4 (1-v^2)^2 +  \frac14   \eta_{\nu x,\eps}\eps^2 v^2  \,  \p'^2 + \frac1{4 \eps^2} \beta  \eta_{\nu x,\eps}^4 v^4 \,  \sin^2 \p \, dt \,.  
\ean
We also define the limiting one dimensional energy as
\ban
\Fb(\p;\Anx):= \int_{\Anx} \frac{\sigma_{\nu x, \beta}}{\pi} |\p'|.
\ean
\begin{prop} \label{1dGammalimit} \textbf{(1d  $\Gamma-\liminf$)}
	Let $x \in \D$, $\nu \in\Sn$  and $\p \in BV_{loc}(\D_{\nu x};\{0,\pi\})$. For any sequence $(v_\eps,\p_\eps)  : \R_{\nu x}   \to (0,1] \times (0,\pi)$ converging as $\eps \to 0$ to $ (1,\p) $ in $L^1_{loc}(\D_{\nu x}) \times L^1_{loc}(\D_{\nu x}) $,  
	
\be \label{liminfF1dx}
	\liminf_{\eps \to 0}	\eps \Feb(v_\eps,\p_\eps;\D_{\nu x}) \geq \F_\beta(\p;\D_{\nu x})  \,.
\ee
\end{prop}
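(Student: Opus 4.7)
The plan is to adapt the slicing--blow-up strategy from Modica--Mortola / Ambrosio--Tortorelli $\Gamma$-convergence theory. I may assume $\liminf_\eps\eps\Feb(v_\eps,\p_\eps;\D_{\nu x})<\infty$ and, after extracting a subsequence, treat this as a limit. Since $\p\in BV_\loc(\D_{\nu x};\{0,\pi\})$, its jump set in $\D_{\nu x}$ is a locally finite collection $\{t_i\}_{i\in I}$, and $|D\p|=\pi\sum_i\delta_{t_i}$ yields $\Fb(\p;\D_{\nu x})=\sum_i\rho(x+t_i\nu)^{3/2}\sigbrb$. Choosing pairwise disjoint intervals $I_i=(t_i-\delta_i,t_i+\delta_i)$ with $\ov{I_i}\subset\D_{\nu x}$, nonnegativity of the energy density gives $\eps\Feb(v_\eps,\p_\eps;\D_{\nu x})\geq\sum_i\eps\Feb(v_\eps,\p_\eps;I_i)$, so it suffices to prove the local lower bound $\liminf_\eps\eps\Feb(v_\eps,\p_\eps;I_i)\geq\rho(x+t_i\nu)^{3/2}\sigbrb$ at each jump.

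\textbf{Blow-up.} Fix $i$ and suppose $\p$ jumps from $0$ to $\pi$ at $t_i$. From $L^1_\loc$-convergence of $\p_\eps$ and the continuity inherited from the weighted $H^1$-bound (with $v_\eps>0$ locally), one can select, after slowly shrinking $\delta_i$ with $\eps$ via a diagonal extraction if necessary, blow-up points $s_\eps^i\to t_i$ with $\p_\eps(s_\eps^i)=\pi/2$. Setting $\hat v_\eps(s):=v_\eps(s_\eps^i+\eps s)$ and $\hat\p_\eps(s):=\p_\eps(s_\eps^i+\eps s)$ on $J_\eps^i:=((t_i-\delta_i-s_\eps^i)/\eps,(t_i+\delta_i-s_\eps^i)/\eps)\nearrow\R$, a direct change of variable together with the locally uniform convergence $\eta_\eps(s_\eps^i+\eps s)\to\sqrt{\rho_0}$ (with $\rho_0:=\rho(x+t_i\nu)$, from Hypothesis~\ref{hypo}) gives
\[
\eps\Feb(v_\eps,\p_\eps;I_i)=\int_{J_\eps^i}\Bigl[\tfrac12\eta_\eps^2\hat v_\eps'^2+\tfrac14\eta_\eps^4(1-\hat v_\eps^2)^2+\tfrac18\eta_\eps^2\hat v_\eps^2\hat\p_\eps'^2+\tfrac{\beta}{8}\eta_\eps^4\hat v_\eps^4\sin^2\hat\p_\eps\Bigr]ds.
\]
The uniform energy bound makes $\hat v_\eps$ bounded in $H^1_\loc(\R)\cap L^\infty$ and $\hat\p_\eps$ bounded in $L^\infty$, so up to further extraction $\hat v_\eps\to V_\infty$ in $C^0_\loc(\R)$ and $\hat\p_\eps\to\Phi_\infty$ in $L^1_\loc(\R)$.

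\textbf{Passage to the limit.} Fatou's lemma on every $(-R,R)$ followed by monotone convergence $R\to\infty$ yields
\[
\liminf_\eps\eps\Feb(v_\eps,\p_\eps;I_i)\geq\int_\R\Bigl[\tfrac12\rho_0 V_\infty'^2+\tfrac14\rho_0^2(1-V_\infty^2)^2+\tfrac18\rho_0V_\infty^2\Phi_\infty'^2+\tfrac{\beta}{8}\rho_0^2V_\infty^4\sin^2\Phi_\infty\Bigr]ds,
\]
and the substitution $u=\sqrt{\rho_0}\,s$, $\tilde V(u):=V_\infty(u/\sqrt{\rho_0})$, $\tilde\Phi(u):=\Phi_\infty(u/\sqrt{\rho_0})$, turns the right-hand side precisely into $\rho_0^{3/2}\Eund(\tilde V,\tilde\Phi)$, matching $\sigb(x)=\rho(x)^{3/2}\sigbrb$.

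\textbf{Admissibility (main obstacle).} The hard step is to show $\Eund(\tilde V,\tilde\Phi)\geq\sigbrb$, which requires $(\tilde V,\tilde\Phi)$ to be admissible for the one-dimensional problem defining $\sigbrb$. Finiteness of $\Eund(\tilde V,\tilde\Phi)$ together with the argument of Proposition~\ref{existbeta} gives $\tilde V\to 1$ at $\pm\infty$ and $\tilde\Phi$ admitting limits in $\{0,\pi\}$ at $\pm\infty$. I split into two cases. If $\tilde V>0$ on $\R$, the weighted $H^1$-bound on $\hat\p_\eps$ upgrades its $L^1_\loc$-convergence to $C^0_\loc$-convergence by Arzel\`a-Ascoli, using that $\hat v_\eps$ is bounded away from zero locally; this pins down $\tilde\Phi(0)=\pi/2$, so the two end-limits must differ, forcing $\lim_{-\infty}\tilde\Phi=0$, $\lim_{+\infty}\tilde\Phi=\pi$, and hence $\Eund(\tilde V,\tilde\Phi)\geq\sigbrb$. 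If instead $\tilde V$ vanishes at some point, Modica's chain-rule trick applied to the $V$-part alone forces
\[
\Eund(\tilde V,\tilde\Phi)\geq\tfrac12\int_\R\tilde V'^2+W(\tilde V)\,du\geq\tfrac{2\sqrt 2}{3}>\sigbrb,
\]
the last strict inequality holding because the competitor in Proposition~\ref{away} gives $\sigbrb\leq\sqrt 2(\Psi(\ov m)+\tfrac23)<\tfrac{2\sqrt 2}{3}$ (since $\Psi(\ov m)<\Psi(0)=0$). Either way the local lower bound is achieved, and summation over $i$ concludes.
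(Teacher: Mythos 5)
Your architecture (localise at each jump, blow up at scale $\eps$, pass to the limit by lower semicontinuity, identify the limiting cell energy $\rho_0^{3/2}\Eund(\tilde V,\tilde\Phi)$) is a legitimate alternative to the paper's argument, and you correctly identify admissibility of the blow-up limit as the crux. But your resolution of that crux contains a genuine gap: from $\tilde\Phi(0)=\pi/2$ and $\lim_{\pm\infty}\tilde\Phi\in\{0,\pi\}$ you conclude that ``the two end-limits must differ''. This does not follow. After blow-up at scale $\eps$ all information from the $L^1_{loc}$-convergence $\p_\eps\to\p$ is lost (every point $s_\eps^i+\eps s$ collapses to $t_i$), so nothing prevents the limit profile from going $0\to\pi/2\to 0$, i.e.\ $\lim_{-\infty}\tilde\Phi=\lim_{+\infty}\tilde\Phi$; in that case $(\tilde V,\tilde\Phi)$ is simply not admissible for $\sigbrb$ and your chain of inequalities breaks. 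The gap is repairable by the reflection trick used in the paper's Proposition \ref{existbeta}: if both end-limits coincide, reflect the half-line of smaller energy about $0$ (sending $\tilde\Phi$ to $\pi-\tilde\Phi(-\cdot)$ on one side) to produce an admissible competitor with energy $\le\Eund(\tilde V,\tilde\Phi)$, which restores $\Eund(\tilde V,\tilde\Phi)\ge\sigbrb$. Your treatment of the degenerate case $\inf\tilde V=0$ via two Modica--Mortola transitions and the strict inequality $\sigbrb<\frac{2\sqrt2}{3}$ is correct, and the scaling bookkeeping leading to $\rho_0^{3/2}\Eund$ checks out.

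It is worth contrasting with the paper's route, which avoids the admissibility problem altogether: instead of passing to a limit profile, the paper selects points $t^\pm$ on either side of the jump where $v_\eps(t^\pm)\to1$ and $\p_\eps(t^\pm)\to\p(t^\pm)\in\{0,\pi\}$ with $\p(t^+)\ne\p(t^-)$ (possible by a.e.\ convergence along a subsequence), rescales, and then glues the finite-$\eps$ rescaled pair to the constants $(1,\p(t^\pm))$ by linear joints of cost $o_{\eps\to0}(1)$. The modified pair is admissible for $\sigbrb$ \emph{at every fixed $\eps$}, so no compactness, no lower semicontinuity of the coupled terms (which you gloss over somewhat by invoking ``Fatou'' for integrands containing $\hat v_\eps'^2$ and $\hat v_\eps^2\hat\p_\eps'^2$), and no analysis of the limit profile are needed. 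Your approach, once the reflection fix is inserted and the lower semicontinuity of the weak limits is spelled out, buys a somewhat more conceptual picture of the optimal transition layer, at the price of exactly the delicate points the paper's gluing construction was designed to bypass.
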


\begin{proof} Let $B$ be any open, relatively compact subset of $\D_{\nu x}$.  Let $ t_0 \in  B \cap J_\p$ and $\delta_0>0$ be such that  $  (t_0-\delta,t_0+\delta)$ is contained in $B$. We can choose $t^\pm \in  (t_0-\delta,t_0+\delta)  $ such that 

\ben
	t^- < t_0 < t^+  \,,\quad \p (t^+) \neq \p (t^-) \,,   \quad  \p_\eps(t^\pm) \to \p(t^\pm)  \in \{0,\pi \}    \quad \text{and} \quad v_\eps(t^\pm) \to 1\, 
\een	  

as $\eps \to 0$. Estimate (\ref{localC0cvetaepsrho}) and  $\overline B \subset   \D_{\nu x} $ yield  

\ban
	\eps \Feb(v_\eps,\p_\eps; (t^+,t^-) )&=&    \frac{1}{2} \int_{t^-}^{t^+}  \eps \eta_{\nu x,\eps}^2  v'^2_\eps+  \frac1{2\eps }   \eta_{\nu x,\eps}^4 (1-  v_\eps ^2)^2 +  \frac14   \eps \eta_{\nu x, \eps}^2   v_\eps^2  \,     \p'^2_\eps+ \frac1{4\eps } \beta   \eta_{\nu x,\eps}^4   v_\eps ^4 \,  \sin^2   \p_\eps dt \\
 							&\geq&   \rho_{\nu x}(t_0) \frac{1}{2}    \int_{t^-}^{t^+}  \eps     v'^2_\eps +  \frac1{2 \eps}  \rho_{\nu x}(t_0)  (1-  v_\eps ^2)^2 +  \frac14  \eps    v_\eps^2  \,    \p'^2_\eps + \frac1{4 \eps} \beta   \rho_{\nu x}(t_0)   v_\eps ^4 \,  \sin^2   \p_\eps dt  \\ &&    \, - \, c' \,   \delta  + o_{\eps \to 0}(1)  \,.
 \ean

for some $c'=c'(B)>0$.  We define $T^\pm_\eps:= (t^\pm - t_0) \frac{\sqrt{\rho_{\nu x}(t_0)}}{2\eps}$ and $\tilde f  (t) :=   f \lt(\frac{\eps}{\sqrt{\rho_{\nu x}(t_0)}} t + \tilde t_0\rt) $ for $f = v_\eps, \p_\eps, \eta_{\nu x,\eps} \text{ or } \rho_{\nu x}  $.   A change of variables yields

\ban
	\eps \int_{t^-}^{t^+}        v'^2_\eps +  \frac14   v_\eps^2  \, \p'^2_\eps dt  &=&   \sqrt{\rho_{\nu x}(t_0)}  \int_{T^-_\eps}^{T^+_\eps} \tilde v'^2_\eps +  \frac14 \tilde   v_\eps^2  \, \tilde  \p'^2_\eps dt   \\
	    \frac{\rho_{\nu x}(t_0)}{  \eps}    \int_{t^-}^{t^+}  \frac12     (1-  v_\eps ^2)^2 +  \frac1{4  } \beta     v_\eps ^4 \,  \sin^2   \p_\eps   dt &=&    \sqrt{\rho_{\nu x}(t_0)}   \int_{T^-_\eps}^{T^+_\eps} \frac12   (1-   \tilde v_\eps ^2)^2+    \frac1{4 \eps} \beta    \tilde v_\eps ^4 \,  \sin^2  \tilde \p_\eps dt 
\ean

and thus

\ben
	 \Feb(v_\eps,\p_\eps; (t^+,t^-) )   \geq     \rho_{\nu x}(t_0)^{3/2} \, \mathcal G_{\beta} ( \tilde v_\eps, \tilde \p_\eps ; (T^-_\eps,T^+_\eps))   \, - \, c' \,   \delta  + o_{\eps \to 0}(1)   \,,
 \een	 
where for an interval $I$ and a pair $(v,\p)$,
$\mathcal G_{\beta} (v, \p ; I)$ is the localized version of $\mathcal G_{\beta}$ defined in (\ref{energ1D}).

Define now 
 
\ben
	\hat v_\eps (t) := \left\{ \begin{array}{ccl}
			\tilde v_\eps (t)&  \text{if} & t \in  (T^-_\eps,T^+_\eps)   \\
			\text{ linear joint} & \text{if}  & t  \in (T^+_\eps    ,T^+_\eps  +\delta )  \cup  (T^-_\eps -\delta    ,T^-_\eps   ) \\
			1 & \text{if}  & t \in \R \setminus   (T^-_\eps  -\delta   ,T^+_\eps  +\delta )  	 
      		 	\end{array}  \right. 
\een
and 
\ben
	\hat \p_\eps (t) := \left\{ \begin{array}{ccl}
			\tilde \p_\eps (t) & \text{if}  &  t \in  (T^-_\eps,T^+_\eps)    \\
			\text{ linear joint} & \text{if}  & t  \in (T^+_\eps    ,T^+_\eps  +\delta )  \cup  (T^-_\eps -\delta    ,T^-_\eps   ) \\
			\p (t^-) & \text{if}  & t \in(-\infty,   T^-_\eps - \delta) \\
			\p (t^+)  & \text{if}  & t \in (T^+_\eps + \delta, +\infty)
       		 	\end{array}  \right.   \,.
\een

We have that $(\hat v_\eps,\hat \p_\eps)$ is admissible for $ \overline \sigma_{\beta}$ so

\ben
	\mathcal G_{\beta} ( \tilde v_\eps, \tilde \p_\eps ; (T^-_\eps,T^+_\eps)) \geq \bar{\sigma}_{ \beta} + o_{\eps\to 0}(1)    \,.
\een

 Hence,
  
\be \label{est00delta}
	 \eps \Feb(v_\eps,\p_\eps; (t^+,t^-) ) \geq  \sigma_{\nu x,\beta} (t_0)    + o_{\eps\to 0}(1)   - \, c'  \,   \delta    \,. 
 \ee

Since $\p \in BV_{loc}(\D_{\nu x}, \{0,\pi\})$ we have  $      B \cap J_\p = \{ t_0 , \dots , t_N \}$ for some $N \in \mathbb{N}$. Consider $\delta_0>0$ such that  for $\delta \in (0,\delta_0)$, the intervals  $I_\delta  = (t_0-\delta,t_0+\delta)$ are disjoint and contained in $B$. Reasoning as before and since (\ref{est00delta}) holds for every $\delta \in (0,\delta_0)$, we obtain

 \ben
	 \eps \Feb(v_\eps,\p_\eps; \D_{\nu x} ) \geq \sum_{i=0}^N \sigma_{\nu x,\beta} (t_i)   + o_{\eps\to 0}(1)     \,. 
 \een	 
 
Thus,

 \ben
 	\liminf_{\eps\to 0} \eps \Feb(v_\eps,\p_\eps; \D_{\nu x} ) \geq   \sum_{t \in B \cap J_\p }  \sigma_{\nu x,\beta} (t)    = \int_{B}   \frac{\sigma_{\nu x,\beta}}{\pi} |\p'|   =  \F_\beta(v,\p;B)  \,.  
\een

This yields  (\ref{liminfF1dx}) since the choice of $B$ was arbitrary.

 \end{proof}

We can now prove the $\Gamma-$ liminf. For any $\p \in BV_{loc}(\D)$, we define the localised  lower $\Gamma$-limit of $\eps \Feb $ as the set function defined in $ \mathcal{A}(\D)$ by

 \ben
	F' (\p;A) := \inf \left\{ \liminf_{\eps\to 0} \eps \F_\eps(  v_{\eps },   \p_{\eps}\, ;A )  \,;\,  (  v_{\eps },   \p_{\eps}) \to (1,\p) \text{ in } L^1_{loc}(\D) \times L^1_{loc}(\D)   \right\} \,,
\een

and we write  $	F' (\p) :=		F' (\p;\D)$.
\begin{prop} \textbf{($\Gamma-$liminf)}
For any $\p \in BV_{loc}(\D ;\{0,\pi\})$, 
	
\be \label{liminfF}
	F' (\p) \geq \F_\beta(\p )  \,.
\ee
\end{prop}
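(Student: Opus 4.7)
The plan is to reduce the $n$-dimensional lower bound to the one-dimensional $\Gamma$-liminf of Proposition 5.2 via the slicing of Theorem 2.2, then remove the loss of the normal component by a partition argument on $J_\p$. Fix a sequence $(v_\eps,\p_\eps)\to (1,\p)$ in $L^1_{\loc}(\D)\times L^1_{\loc}(\D)$; we may assume $\liminf_\eps \eps\Feb(v_\eps,\p_\eps)<\infty$ and, after extraction, that this liminf is a limit. Truncation by $1$ preserves the constraint $v_\eps\in(0,1]$ without increasing the energy.

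Fix $\nu\in\Sn$ and an open set $A\subset\subset \D$. The pointwise bounds $|\nabla v_\eps|^2 \geq \sprod{\nabla v_\eps}{\nu}^2$ and $|\nabla \p_\eps|^2 \geq \sprod{\nabla \p_\eps}{\nu}^2$ on the two gradient terms of $\Feb$, together with Fubini's theorem applied to the two non-gradient double-well terms, yield
\[
\eps\,\Feb(v_\eps,\p_\eps;A) \;\geq\; \int_{A_\nu} \eps\,\Feb\bigl((v_\eps)_{\nu x},(\p_\eps)_{\nu x};A_{\nu x}\bigr)\,d\HH(x).
\]
A further diagonal extraction (via Fubini) ensures that $((v_\eps)_{\nu x},(\p_\eps)_{\nu x})\to (1,\p_{\nu x})$ in $L^1_{\loc}(A_{\nu x})$ for $\HH$-a.e.\ $x\in A_\nu$, with $\p_{\nu x}\in BV_{\loc}(A_{\nu x};\{0,\pi\})$ by Theorem 2.2. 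Fatou's lemma combined with Proposition 5.2 applied slice-by-slice then gives
\[
\liminf_\eps \eps\,\Feb(v_\eps,\p_\eps;A) \;\geq\; \int_{A_\nu}\int_{A_{\nu x}}\frac{\sigma_{\nu x,\beta}}{\pi}|\p'_{\nu x}|\,dt\,d\HH(x) \;=\; \int_{J_\p\cap A}\sigma_\beta\,|\sprod{\nu^\p}{\nu}|\,d\HH,
\]
where the last equality uses the slicing formula (2.7) with $g=\sigma_\beta$ together with $|D\p_{\nu x}|=\pi\sum_{t\in J_{\p_{\nu x}}}\delta_t$, valid because $\p_{\nu x}\in\{0,\pi\}$.

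It remains to replace $|\sprod{\nu^\p}{\nu}|$ by $1$. Fix a countable dense $\{\nu_k\}\subset\Sn$. Given $\eta>0$ and open $A\subset\subset\D$, use the countable $\HH$-rectifiability of $J_\p$, the continuity of $\sigma_\beta$ on $\D$, and Lusin's theorem applied to the measurable map $\nu^\p$ to select pairwise disjoint open sets $Q^{(1)},\dots,Q^{(N)}\subset\subset A$ and indices $i_1,\dots,i_N$ with $|\sprod{\nu^\p}{\nu_{i_j}}|\geq 1-\eta$ on $Q^{(j)}\cap J_\p$ and $\HH\bigl((A\cap J_\p)\setminus\bigcup_j Q^{(j)}\bigr)<\eta$. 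Applying the previous paragraph's estimate to each pair $(Q^{(j)},\nu_{i_j})$, using disjointness of the $Q^{(j)}$ to sum the localized energies and Fatou for finite sums, one obtains $\liminf_\eps \eps\,\Feb(v_\eps,\p_\eps;A)\geq (1-\eta)\Fb(\p;A)-C\eta$. Letting $\eta\to 0$ and then $A\uparrow \D$ concludes the proof. The principal technical subtlety lies in this last partition/covering step: one must simultaneously partition $J_\p$ into pieces on which $\nu^\p$ is nearly constant and enclose them in pairwise disjoint open sets whose extra intersection with $J_\p$ contributes negligible surface measure; this is a standard but careful application of geometric measure theory, made possible by the continuity of $\sigma_\beta=\rho^{3/2}\sigbrb$ on $\D$.
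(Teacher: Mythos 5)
Your proposal is correct and takes essentially the same route as the paper: slicing in a fixed direction $\nu$, Fubini and Fatou combined with the one-dimensional $\Gamma$-liminf of the preceding proposition, and then passage to the supremum over a dense family of directions using superadditivity of the localized lower $\Gamma$-limit. The only difference is that where the paper invokes the abstract localization lemma \cite[Prop.\ 1.16]{braides} for this last step, you sketch its proof by hand via a Lusin-type covering of $J_\p$ by disjoint open sets on which $\nu^\p$ is nearly constant (and the subtlety you flag is resolved, as in the standard proof of that lemma, by applying the directional bound on the whole open set but integrating only over a compact core).
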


\begin{proof}  Consider any fixed open set  $A $ relatively compact in $\D$,  $\nu \in \Sn$ and $\p\in BV(A,\{0,\pi\})$. Let then   $(v_\eps,\p_\eps) $ be such that $v_\eps\to 1$ and $\p_\eps\to \p$ in $L^1(A)$ and such that 
\[\varliminf_{\eps\to 0} \eps \Feb(v_\eps,\p_\eps; A ) =F' (\p;A).\]
 We may assume that $F'(\p,A)<\infty$, so that \eqref{boundenergy2d} is satisfied. From Fubini's Theorem, there holds

\ben 
	 \eps\F_{\eps,\beta} (v_\eps,\p_\eps\,;A) \geq \int_{A_\nu}  \eps\F_{\eps,\beta} (v_{\eps,\nu x},\p_{\eps,\nu x}\,;A_{\nu x}) \, d\HH \,,
\een

with $(v_{\eps,\nu x},\p_{\eps,\nu x}) \to (1,\p_{\nu x})$  for a.e.\ $x \in A_\nu$. Then,  Fatou's lemma, Fubini's formula, \eqref{integjumpslice} and Proposition \ref{1dGammalimit} yield

\ben 
	\varliminf_{\eps \to 0}	\eps \Feb(v_\eps,\p_\eps;A) \geq  \int_{A_{\nu }}  d\HH(x)  \int_{A_{\nu x}}   \frac{\sigma_{\nu x,\beta}}{\pi} |\p'_{\nu x}|  =  \int_{ A \cap J_\p}  \sigma_\beta(x)   |\langle \nu_\p, \nu  \rangle|  d\HH    \,.
\een	

 Notice that the last equality holds because $\p$ is the characteristic function of a set with finite perimeter  in $A$. Hence,

\ben
 	F' (\p;A) \geq    \int_{ A \cap J_\p}  \sigma_\beta(x)   |\langle \nu_\p, \nu  \rangle|  d\HH  \,.
\een

Since all the functions $F_\eps$ are local,  $F' (\p;\cdot)$ is super-additive on open sets with disjoint compact closures.  We may apply  \cite[Prop. 1.16]{braides} with $\Omega=\D$, $\lambda =  \sigma_\beta(x) \,   \HH \restr \partial^*\{ \p = \pi\} $ (where we recall that $\partial^*E$ denotes the reduced boundary of $E$) and $\psi_i= |\langle \nu_\p, \nu_i \rangle| $,  where $\{\nu_i \}$ is a dense family in $\Sn$. Remarking that $  \sup_i  |\langle \nu_\p, \nu_i \rangle|  =1 $, we obtain 

\ben
	F' (\p;A) \geq \int_{A\cap J_\p}  \sigma_\beta(x)  d\HH =  \F_\beta(\p;A) \,, 
\een

which yields (\ref{liminfF}).  
  
\end{proof}


\subsection{$\Gamma-$limsup}
In this section we construct a recovery sequence and prove the $\Gamma-$limsup.  Using the following lemma, we may restrict our selves to prove the inequality for the $\Gamma-$limsup for functions in

 \ben
 	X := \left\{  \p = \pi \1_A    \,;\, A \text{ relatively compact open set of $\D$ of class } C^{\infty}   \right\}\,.
 \een

\begin{lem} \label{approxA}
	Let  $\p= \pi \mathbf{1}_A \in BV_{\text{loc}}(\D)$. There exists a sequence $\{ \p_k=\pi \1_{A_k} \}_{k \in \mathbb{N}}$  in  $X$ such that:
	\begin{description}
	\item[(i)] $\lim_{k \to \infty} \mathcal L^n((A_k \cap\D)\Delta A)=0$,
	\item[(ii)]  $\limsup_{k \to \infty}  \F_\beta(\p_k ) \leq  \F_\beta(\p )$,
	\item[(iii)]   $\int_{A_k } \rho dx = \int_{A} \rho dx$  \,.
\end{description}
\end{lem}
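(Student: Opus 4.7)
The strategy is a two-stage construction: first, produce $C^\infty$ approximations, relatively compact in $\D$, whose weighted perimeter converges to the right limit; then correct the mass by a small local surgery that is asymptotically invisible for the energy.

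\textit{Step 1: smooth approximation with convergence of weighted perimeter.} Fix a smooth exhaustion $\D_j \subset\subset \D_{j+1}\subset\subset \D$. On each $\D_j$, $\mathbf{1}_A$ is of bounded variation, and by the classical approximation of sets of finite perimeter (mollify $\mathbf{1}_A$, apply Sard's theorem to a.e.\ superlevel set, and use the coarea formula), one produces $C^\infty$ open sets $B^j_\eps \subset\subset \D$ with
\[
\mathbf{1}_{B^j_\eps}\to \mathbf{1}_{A\cap \D_j}\ \text{in}\ L^1,\qquad |D\mathbf{1}_{B^j_\eps}|(\D_j)\to |D\mathbf{1}_A|(\D_j)\quad(\eps\to 0).
\]
Since $\sigb = \sigbrb\,\rho^{3/2}$ is continuous and bounded on $\overline{\D_j}$, Reshetnyak continuity upgrades this to
\[
\int_{\partial B^j_\eps} \sigb\, d\HH \,\longrightarrow\, \int_{J_\p\cap \D_j}\sigb\, d\HH.
\]
Because $\rho$ vanishes on $\partial \D$, the tail $\int_{J_\p\cap(\D\setminus \D_j)}\sigb\, d\HH$ tends to zero as $j\to\infty$. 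A standard diagonal extraction then yields smooth open sets $\tilde A_k\subset\subset \D$ with $|\tilde A_k\,\Delta\, A|\to 0$ and $\Fb(\pi\mathbf{1}_{\tilde A_k})\to \Fb(\pi\mathbf{1}_A)$.

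\textit{Step 2: enforcing the mass constraint.} Let $m_k := \int_A\rho - \int_{\tilde A_k}\rho\to 0$. The nontrivial case is $0<\int_A\rho<\int_\D\rho$ (otherwise the mass condition is incompatible with relative compactness only when $\alpha_1$ or $\alpha_2$ vanishes, which is excluded). Pick Lebesgue density points $x^+$ of $A$ and $x^-$ of $\D\setminus A$ lying strictly inside $\D$, where $\rho>0$. If $m_k>0$, choose $r_k\to 0$ with $\int_{B_{r_k}(x^-)}\rho\, dx = m_k$ and set $A_k:= \tilde A_k \cup B_{r_k}(x^-)$; if $m_k<0$, symmetrically carve out a ball around $x^+$ of appropriate radius. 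Using the density property of $x^\pm$ and $L^1$ convergence, after a harmless small shift of the center we may ensure that for large $k$ the correction ball is disjoint from $\overline{\tilde A_k}$ (resp.\ compactly contained in $\Int \tilde A_k$), so $A_k$ is a disjoint union (resp.\ a set-difference with separated boundaries) of two $C^\infty$ sets, hence $C^\infty$. The added or removed weighted perimeter is bounded by $\|\sigb\|_\infty \HH(\partial B_{r_k}) = O(r_k^{n-1})\to 0$, so (ii) is preserved, and (i), (iii) hold by construction.

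\textit{Main obstacle.} The subtle point is in Step~2: one must guarantee that the correction ball sits \emph{cleanly} on a single side of $\partial \tilde A_k$, so that $A_k$ stays smooth. This is achieved by combining the Lebesgue density of $x^\pm$ with the $L^1$ convergence $\tilde A_k\to A$, and slightly shifting the center inside a positive-density neighborhood if necessary, then extracting a diagonal subsequence. Once this separation is secured the rest of the estimate is soft, because $\sigb$ is bounded and the perturbation has vanishing $(n-1)$-dimensional area.
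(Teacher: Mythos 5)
Your overall strategy is the right one and matches what the paper intends (the authors omit the proof, deferring to \cite[Prop.~4.1]{Bou}, which is precisely a mollification argument combined with a volume-fixing correction). Step~1 is essentially sound, with one imprecision: since you smooth $A\cap\D_j$ rather than $A$, the boundaries $\partial B^j_\eps$ also accumulate on the artificial cut, so Reshetnyak continuity gives convergence of $\int_{\partial B^j_\eps}\sigb\,d\HH$ to $\int_{\partial^*(A\cap\D_j)}\sigb\,d\HH$, which exceeds $\int_{J_\p\cap\D_j}\sigb\,d\HH$ by the term $\int_{A^{(1)}\cap\partial\D_j}\sigb\,d\HH$. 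This extra term is harmless, but for the reason you attribute to the tail: $\sigb=\sigbrb\,\rho^{3/2}$ vanishes on $\partial\D$, so choosing $\D_j=\{V<\lambda^2-1/j\}$ the cut contributes at most $\sup_{\D\setminus\D_j}\sigb\cdot\HH(\partial\D_j)\to0$. (The tail $\int_{J_\p\setminus\D_j}\sigb\,d\HH\to0$ simply because the measure $\sigb\,\HH\restr J_\p$ is finite in the only nontrivial case $\Fb(\p)<\infty$.)

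Step~2, however, has a genuine gap. A Lebesgue density point $x^-$ of $\D\setminus A$ does not yield a ball of a prescribed radius disjoint from $\overline{\tilde A_k}$, and no ``small shift of the center'' repairs this: a set of finite perimeter can meet every ball in positive measure (e.g.\ a dense open union of tiny balls), so there may be no ball at all contained in the complement of $A$ up to null sets. Quantitatively, the mollification bound $\|\1_{A'}*\rho_\eps-\1_{A'}\|_{L^1}\le C\eps\,P(A')$ forces a mass defect $|m_k|$ of order $\eps_k$, hence a correction ball of radius $r_k\sim|m_k|^{1/n}\sim\eps_k^{1/n}$, whereas the neighbourhood of $x^-$ that the density property guarantees to be free of $\tilde A_k=\{u_{\eps_k}>t_k\}$ has radius only of order $\eps_k$; for $n\ge2$ these scales are incompatible. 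And if the ball does cross $\partial\tilde A_k$, even transversally, the union is only Lipschitz, so $\p_k\notin X$. The standard repair (and the one implicit in \cite{Bou}) is to fix the mass by a one-parameter family of diffeomorphisms $\Phi_\tau$ supported in a small ball centred at a point of $\partial\tilde A_k$ lying in a fixed compact subset of $\D$ where $\rho>0$: the sets $\Phi_\tau(\tilde A_k)$ remain smooth and relatively compact, $\tau\mapsto\int_{\Phi_\tau(\tilde A_k)}\rho\,dx$ is continuous with nonzero derivative at $\tau=0$ (choose the generating vector field $X$ with $\int_{\partial\tilde A_k}\rho\,\langle X,\nu\rangle\,d\HH\neq0$), so a $\tau_k=O(|m_k|)\to0$ restores (iii), while the weighted perimeter changes by $O(\tau_k)$, preserving (i) and (ii). With that substitution your proof goes through.
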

 
The proof of Lemma \ref{approxA} uses the continuity of $\sigb$ with respect to $x$ and follows closely the proof of \cite[Prop. 4.1]{Bou}, therefore we omit it here.  \\

We first construct in Proposition \ref{Glimsup} a recovery sequence for functions in $ X$. We then explain in Lemma \ref{mclemma} how to take into account the mass constraint  (\ref{massvphi}).

\begin{prop}[{\bf $\Gamma$-limsup}] \label{Glimsup}
	Let $\beta>0$ and $\p = \pi \1_A  \in X$, then there exists a sequence of functions $(v_\eps,\p_\eps) \in Y(\D)  $ such that

	\be \label{cvpropglsup}
		(v_\eps,\p_\eps)  \to (1,\p) \quad  \text{ in }  \quad  L^1_{loc}(\D) \times L^1_{loc}(\D)  
	\ee
	
and	
	\be \label{gammalimsup}
		 \varlimsup_{\eps \to 0}  \eps \Feb(v_\eps,\p_\eps) \leq   \F_\beta(\p) \,.
	\ee 
\end{prop}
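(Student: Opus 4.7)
The plan is to transplant a nearly optimal one-dimensional profile along the normal direction to $\partial A$, with the local rescaling $t\mapsto\sqrt{\rho(y)}\,t/\eps$ dictated by the Thomas-Fermi density. For a fixed $\delta>0$, take a minimiser $(\bar v,\bar\p)$ of $\sigbrb$ (which exists by Proposition \ref{existbeta}) and produce, by a standard cutoff-and-gluing, a truncated profile $(\bar v_\delta,\bar\p_\delta)$ which equals $(1,0)$ on $(-\infty,-T_\delta]$ and $(1,\pi)$ on $[T_\delta,+\infty)$ for some $T_\delta<\infty$, and satisfies $\Eund(\bar v_\delta,\bar\p_\delta)\le\sigbrb+\delta$. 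This is possible because $\Eund(\bar v,\bar\p)<\infty$ forces the energy density to have vanishing tails, and because the uniform lower bound $\bar v\ge m^*>0$ from Proposition \ref{away} controls the cost of gluing $\bar\p$ to its asymptotic constant value.

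Since $A\subset\subset\D$ is smooth, there exists $\tau>0$ such that on the tubular neighbourhood $U=\{|d_A|<\tau\}$ the signed distance $d_A$ (positive inside $A$) is smooth, the nearest-point projection $\pi_A:U\to\partial A$ is well defined and smooth, and the Fermi change of variables $x\mapsto(y,t)=(\pi_A(x),d_A(x))$ has Jacobian $J_A(y,t)=1+O(t)$. Define
\[ v_\eps(x):=\bar v_\delta\!\left(\frac{\sqrt{\rho(\pi_A(x))}\,d_A(x)}{\eps}\right),\qquad \p_\eps(x):=\bar\p_\delta\!\left(\frac{\sqrt{\rho(\pi_A(x))}\,d_A(x)}{\eps}\right) \]
for $x\in U$, and extend by the constants $v_\eps\equiv 1$, $\p_\eps\equiv\pi\1_A$ on $\D\setminus U$. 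For $\eps$ small, $(\bar v_\delta,\bar\p_\delta)$ is already constant on $\{\eps T_\delta/\sqrt{\inf_{\partial A}\rho}<|d_A|<\tau\}$, hence the extension is smooth, $(v_\eps,\p_\eps)\in Y(\D)$, and $(v_\eps,\p_\eps)\to(1,\pi\1_A)$ in $L^1_\loc(\D)\times L^1_\loc(\D)$ because the transition band has Lebesgue measure $O(\eps)$.

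To evaluate $\eps\Feb(v_\eps,\p_\eps)$, pass to Fermi coordinates and rescale $s=\sqrt{\rho(y)}\,t/\eps$. Since $|\nabla d_A|=1$ and the tangential derivatives of $\sqrt{\rho\circ\pi_A}$ contribute only an $O(|d_A|/\eps)=O(|s|)$ correction, $|\nabla v_\eps|^2=\rho(y)|\bar v_\delta'(s)|^2/\eps^2(1+O(\eps))$ and similarly for $|\nabla\p_\eps|^2$; together with $dx=J_A(y,t)\,\eps\,ds\,d\HH(y)/\sqrt{\rho(y)}$ and with the replacement $\eta_\eps^2=\rho+o(1)$ given by Hypothesis \ref{hypo} (uniformly on $\overline U\subset\D$), the four terms of $\eps\Feb$ all rescale to $\rho(y)^{3/2}$ times the corresponding term of $\Eund(\bar v_\delta,\bar\p_\delta)$, yielding
\[ \eps\Feb(v_\eps,\p_\eps)=\int_{\partial A}\rho(y)^{3/2}\,\Eund(\bar v_\delta,\bar\p_\delta)\,d\HH(y)+o_\eps(1)\le(\sigbrb+\delta)\int_{\partial A}\rho^{3/2}\,d\HH+o_\eps(1). \]
Since $\Fb(\p)=\sigbrb\int_{\partial A}\rho^{3/2}\,d\HH$, passing first to $\eps\to 0$ and then to $\delta\to 0$ (or extracting a diagonal subsequence $\delta=\delta(\eps)\to 0$ sufficiently slowly) gives $\varlimsup_{\eps\to 0}\eps\Feb(v_\eps,\p_\eps)\le\Fb(\p)$, i.e.\ \eqref{gammalimsup}.

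The main technical obstacle is the truncation step: one must trade only an arbitrarily small amount of energy for compact support of the profile, without invoking uniqueness of minimisers (which is left open in Section~\ref{Sec:surftens}). This is achieved by combining the integrability of the energy density with the quantitative bound $\bar v\ge m^*>0$, which keeps the $\p$-kinetic term $\tfrac{1}{4}v^2\p'^2$ under control throughout the gluing. The remaining ingredients -- the Fermi change of variables, the tangential variation of $\sqrt{\rho\circ\pi_A}$, and the replacement of $\eta_\eps$ by $\sqrt\rho$ -- are routine, provided the $O(\eps)$ errors are carefully collected before the final limit in $\delta$.
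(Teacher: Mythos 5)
Your construction is, in its architecture, the same as the paper's: truncate a minimiser of $\sigbrb$ to a compactly supported transition profile costing at most $\sigbrb+\delta$, transplant it along the normal to $\partial A$ with the rescaling $t\mapsto \sqrt{\rho}\,t/\eps$, evaluate the energy slice by slice, and finish with a diagonal argument in $\delta$. The one genuine divergence is how the spatial dependence of the rescaling factor $\sqrt{\rho(y)}$ is handled. You let it vary continuously with the foot point $y$ and differentiate $\sqrt{\rho\circ \Pi}$, absorbing the tangential derivative into an $O(\eps)$ relative error; the paper instead freezes the profile at finitely many points $x_i\in\partial A$, covers $\partial A$ by patches $\Sigma_i$ on which $\sigb$ oscillates by at most $\delta$ (using only \emph{continuity} of $\sigb$ and compactness of $\partial A$), glues with a partition of unity $\theta_i\circ\Pi$, and evaluates the energy via the coarea formula applied to $d/\eps$. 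Your variant is arguably cleaner when $\rho$ is $C^1$ near $\partial A$ (true for the harmonic potential and the Karali--Sourdis class), but Hypothesis \ref{hypo} as written only gives continuity of $\rho$, which is exactly what the piecewise-constant freezing is designed to exploit; if you keep the continuous rescaling you should add the mild assumption that $\sqrt{\rho}$ is Lipschitz on the tube, or insert a preliminary approximation of $\rho$ by a smooth function. Two smaller points: the paper truncates by affine clipping, $v_{x,\ell}=(1+\ell)v_x\wedge 1$ and an analogous clipping of $\p_x$, with the energy increment controlled by dominated convergence, which is tidier than an ad hoc cutoff-and-gluing; and in your gluing of $\bar\p$ to its asymptotic constants it is the upper bound $\bar v\le 1$ together with $\bar\p(T)\to\pi$ that controls the cost $\tfrac14 v^2\p'^2$ of the interpolating piece, not the lower bound $\bar v\ge m^*$ from Proposition \ref{away}, so that invocation is unnecessary here (it is the lower bound, by contrast, that matters for compactness and for the $\liminf$).
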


\begin{proof}  Define the signed distance to $\partial A$ by $d(x) := \text{dist}(x,A) -  \text{dist}(x,\R^2 \setminus A) $. For sufficiently small $t >0$, 
the projection $\Pi$ on $\partial A$ is well defined in the set $\{ x \in D \,;\, |d(x)| < t    \} $ and $d$ is a Lipschitz function therein with $|\nabla d |=1$ 
a.e.\ .  Define also

 \ben
	f(\eta,v,\p,p,q) :=\frac{1}{2}\left(   \eta^2 |p|^2 +  \frac12 \eta^4 (1-v^2)^2 +    \frac 14   \eta^2 v^2  \, |q|^2 + \frac14 \beta  \eta^4 v^4 \,  \sin^2 \p\right) \,.	
\een

 Let $(v,\p)$ be a minimiser of $ \bar{\sigma}_{\beta}$ and for $x \in \D$, let $v_x(t):=v(\rho(x)^{1/2}t)$, $\p_x(t):=\p(\rho(x)^{1/2} t)$ and  for $\ell >0$ let

\ben
	v_{x,\ell}   := \  (1+\ell)v_x \wedge 1  \quad \text{and} \quad \p_{x,\ell} :=0 \vee \Big( \big( (1+2\ell)\p_x -\ell \big) \wedge 1 \Big) \,. 
\een 
 
Notice that $(v_{x,\ell} ,\p_{x,\ell} )$ converges pointwise to $(v_x,\p_x )$ as $\ell \to 0$, and that there exists $C>0$ such that for every $\ell \in (0,1)$ and $x \in \D$, 

\be \label{todct}
	f (\sqrt  \rho(x) ,v_{x,\ell}, \p_{x,\ell}, v'_{x,\ell} ,  \p'_{x,\ell}) \leq  C f (\sqrt  \rho(x) ,v_x, \p_x, v'_x,  \p'_x)\,.
\ee

 Therefore, thanks to the dominated convergence Theorem, for every $\delta>0$ and every $x\in \D$, there exists $\ell_x$ such that for $\ell\le \ell_x$,
\begin{equation}\label{estimdelta}
 \int_{\R} f (\sqrt  \rho(x) ,v_{x,\ell}, \p_{x,\ell}, v'_{x,\ell} ,  \p'_{x,\ell})  dt\leq  \sigb(x) +\frac{\delta}{2}.
\end{equation}

 Fix from now on such a $\delta>0$. Thanks to the compactness of $\partial A$ and the continuity of $\sigb$, there is a finite family $\left\{ \Sigma_i \right\}_{i \in I} $ of open disjoint subsets
 of $\partial A $ such that  $\HH \left(   \partial A \setminus \cup_{i \in I} \Sigma_i  \right) =0$ and 

\be \label{approunif}
	\sigb(x_i) \leq \sigb(x)+\frac{\delta}{2}  \quad \text{in} \quad  \Sigma_i  
\ee 

for every $i \in I$. Let then $\ell:=(\min_{i\in I} \ell_{x_i})\wedge \delta$ and define $	 \Sigma_i^\delta := \left\{ x  \in \Sigma_i \,;\, \text{dist}(x,\partial \Sigma_i) > \ell   \right\}  $  so that

\be \label{cvsiglali}
	\HH (\Sigma_i  \setminus \Sigma_i^\delta  ) = o_{\delta \to 0}(1) \,. 
\ee  

For $\eps,T>0$ define

\ban
	W_\eps  &:=&  \{ x \in \R^n \,;\, |d(x)| < \eps T  \}\\
	B_i &:=& \{ x \in W_\eps \,;\,    \Pi(x) \in \Sigma_i^\delta  \} \\
	 C_i &:=& \{ x \in W_\eps \,;\,   \Pi(x) \in \Sigma_i \setminus \Sigma_i^\delta  \} \,.
\ean

Notice that for every given $T$, for $\eps$ small enough $W_\eps$ is contained in some fixed compact set of $\D$ containing $A$. Consider a family $ \{   \theta_i  \}_{i\in I} $ of smooth functions such that

\ben
	\sum_{i\in I}   \theta_i = 1 \quad \text{on } \partial A \qquad \text{ and } \qquad    \theta_i = 1 \quad \text{in }    \Sigma_i^\delta   \quad     \forall \, i \in I  \,,
\een

and  define 

\be \label{defrecseq}
	( v_\eps, \p_\eps ) (x)  = \left\{ \begin{array}{cl}
	 \sum_{i\in I}    \theta_ i (\Pi(x)) \left( v_{x_i,\ell}\left( \frac{d(x)}\eps \right), \p_{x_i,\ell}\left( \frac{d(x)}\eps \right)  \right) & \text{ if } |d(x)| \leq \eps T_\delta   \\
	(1,\pi) & \text{ if }  d(x) \geq \eps T_\delta   \\
	(1,0) & \text{ if }  d(x) \leq -\eps T_\delta   \\
	\end{array}\right. \,;
\ee 

where $T_\delta$ is big enough so that $ v_{x_i,\ell}=1 $ in $\R \setminus [-T_\delta,T_\delta]$,   $\p_{x_i,\ell}=0 $ in $ (-\infty,T_\delta]$ and $\p_{x_i,\ell}=\pi $ in $[T_\delta,+\infty)$ for every $  i \in I $. \\

The functions $( v_\eps, \p_\eps ) $ are Lipschitz continuous and converge to $(1,\p   )$ in $L^1_{loc}(\D) \times L^1_{loc}(\D) $. Defining

\ben
	\xi_\eps :=  \frac 1\eps   f(\eta_\eps, v_\eps ,   \p_\eps, \eps^2  \nabla v_\eps,  \eps^2  \nabla \p_\eps) \,, 
\een

there exists $C>0$ such that 

\be \label{derphi}
	|\xi_\eps | \leq  C\eps^{-1}  \,,
\ee
 
and since $|\nabla d |=1$ in $W_\eps$,  

\be \label{phiinbi}
	\xi_\eps(x)  =    |\nabla {d}/{\eps} |  f(\eta_\eps, v_{x_i,\ell} \circ  {d}/{\eps}  ,   \p_{x_i,\ell} \circ  {d}/{\eps},  v'_{x_i,\ell} \circ  {d}/{\eps},  \p'_{x_i,\ell} \circ  {d}/{\eps})  
 \ee 

holds in $B_i$ for all $i \in I$. \\

Using (\ref{cvsiglali}) and (\ref{derphi}) we compute

\ba 
	\eps \Feb(v_\eps,\p_\eps)  &=& \sum_{i \in I}   \int_{   B_i}  \xi_\eps(x) \, dx  + \sum_{i \in I}   \int_{  C_i}   \xi_\eps(x) \, dx  	\nonumber \\
						&\leq&  \sum_{i \in I}       \int_{   B_i}  \xi_\eps(x) \, dx  + \frac C\eps    \mathcal L^n \left(  \bigcup_{i\in I}  C_i \right)  \label{est1}\\
						&=&  \sum_{i \in I}    \int_{  B_i}  \xi_\eps(x) \, dx  +   r^1_{\delta} \,  \nonumber
\ea  
 
 where $r^1_{\delta} =  o_{\delta \to 0}(1) $. Using (\ref{phiinbi})  and the coarea formula  \cite[Prop. 2.4]{Bou} applied to  $u=d/\eps$, we obtain

\ben
	 \int_{  B_i}  \xi_\eps(x) \, dx  =      \int_{-T_\delta}^{T_\delta} \int_{   \{ |d| = \eps t \} \cap B_i  }  f(\eta_\eps(x), v_{x_i,\ell} (t)  ,   \p_{x_i,\ell} (t) , v'_{x_i,\ell} (t),  \p'_{x_i,\ell}(t)) \, d\mathcal{H}^{n-1}(x)  \, dt \,.
 \een
  
Since $B_i \subset \subset \D$, estimate (\ref{localC0cvetaepsrho}) gives

\ban
	 \int_{   B_i}  \xi_\eps(x) \, dx   =      \int_{-T_\delta}^{T_\delta} \int_{   \{ |d| = \eps t \} \cap  B_i     }  f(\sqrt{\rho}(x), v_{x_i,\ell} (t)  ,   \p_{x_i,\ell} (t) , v'_{x_i,\ell} (t),  \p'_{x_i,\ell}(t))   \, d\mathcal{H}^{n-1}(x)  \, dt 
  + r^2_{\delta,\eps}     
  \ean

 where $r^2_{\delta,\eps} = o_{\eps \to 0}(1)$.\\
%
%

Hence, using Fubini's Theorem, \eqref{estimdelta} and \eqref{approunif} we find

\begin{align} 
	\varlimsup_{\eps \to 0}   \int_{    B_i}   \xi_\eps(x) \, dx    &\leq    \int_{     \Sigma_i^\delta} \int_{-T_\delta}^{T_\delta}   f(\sqrt \rho(x_i) ,v_{x_i,\ell}, \p_{x_i ,\ell}, v'_{x_i ,\ell}, \p'_{x_i ,\ell})  \, dt \, d \HH(x) \nonumber \\
&\leq  \int_{     \Sigma_i^\delta} (\sigb(x)  + \delta)  d\HH(x)   \label{est2} \,.
 \end{align} 
 
Putting together (\ref{est1}) and  (\ref{est2}) we obtain

 \ban
	\lim_{\delta \to 0} \varlimsup_{\eps \to 0}    \eps \Feb(v_\eps,\p_\eps)  &\leq&   \int_{  \partial A    } \sigb  d \HH\,= \mathcal{F}_\beta(\p).
 \ean 
 Finally,  a diagonal argument yields (\ref{gammalimsup}).   
  
\end{proof} 

\begin{lem} \textbf{(mass constraint)} \label{mclemma}
Let $\beta>0$ and $\p = \pi \1_A  \in X$ satisfying (\ref{mcfinal}). Then, there exists a sequence of functions $(v_\eps,\p_\eps)$ satisfying  (\ref{massvphi}) for every $\eps>0$ for which (\ref{cvpropglsup}) and  (\ref{gammalimsup}) hold. 
 \end{lem}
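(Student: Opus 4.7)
The plan is to modify the recovery sequence $(v_\eps,\p_\eps)$ from Proposition \ref{Glimsup} by a two-parameter correction that enforces \eqref{massvphi} exactly while adding only vanishing energy. Choose disjoint balls $B_1\subset\subset A$ and $B_2\subset\subset\D\setminus A$, both at positive distance from $\partial A$; from the construction in Proposition \ref{Glimsup}, for $\eps$ small one has $v_\eps\equiv 1$ on $B_1\cup B_2$, $\p_\eps\equiv\pi$ on $B_1$ and $\p_\eps\equiv 0$ on $B_2$. Fix non-negative smooth cutoffs $\chi_i$ compactly supported in $B_i$ with $\chi_i\not\equiv 0$, and set $\tilde v_\eps^{s,t}:=v_\eps+s\chi_1+t\chi_2$.

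First I check that the mass constraints are nearly satisfied for $(v_\eps,\p_\eps)$. The first error $e_\eps^{(1)}:=1-\int\eta_\eps^2 v_\eps^2\,dx$ is supported where $v_\eps\neq 1$, i.e., in a layer of Lebesgue measure $O(\eps)$ around $\partial A$, so $e_\eps^{(1)}=O(\eps)$ by \eqref{etainfty}. Splitting the integral defining $e_\eps^{(2)}:=(\alpha_1-\alpha_2)-\int\eta_\eps^2 v_\eps^2\cos\p_\eps\,dx$ into the transition layer, $A$ and $\D\setminus A$, and using $\int_\D\rho\cos\p\,dx=\alpha_1-\alpha_2$, one finds
\[
e_\eps^{(2)}=\int_\D(\rho-\eta_\eps^2)(\1_{\D\setminus A}-\1_A)\,dx-\int_{\R^n\setminus\D}\eta_\eps^2\cos\p_\eps\,dx+O(\eps),
\]
which is $o(1)$ by \eqref{estoutbulk}--\eqref{localC0cvetaepsrho} since $A\subset\subset\D$.

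Now consider $G_\eps(s,t):=\bigl(\int\eta_\eps^2(\tilde v_\eps^{s,t})^2\,dx,\ \int\eta_\eps^2(\tilde v_\eps^{s,t})^2\cos\p_\eps\,dx\bigr)$, so $G_\eps(0,0)=(1-e_\eps^{(1)},\alpha_1-\alpha_2-e_\eps^{(2)})\to(1,\alpha_1-\alpha_2)$. Using $\cos\p_\eps=-1$ on $B_1$, $\cos\p_\eps=+1$ on $B_2$ and $v_\eps\equiv 1$ there, the Jacobian $DG_\eps(0,0)$ converges to
\[
\begin{pmatrix} 2\int_{B_1}\rho\chi_1 & 2\int_{B_2}\rho\chi_2\\ -2\int_{B_1}\rho\chi_1 & 2\int_{B_2}\rho\chi_2\end{pmatrix},
\]
whose determinant $8\bigl(\int_{B_1}\rho\chi_1\bigr)\bigl(\int_{B_2}\rho\chi_2\bigr)$ is strictly positive. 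A standard application of the inverse function theorem (with uniform bounds for small $\eps$) yields $(s_\eps,t_\eps)\to(0,0)$ with $|(s_\eps,t_\eps)|\le C(|e_\eps^{(1)}|+|e_\eps^{(2)}|)$ such that $G_\eps(s_\eps,t_\eps)=(1,\alpha_1-\alpha_2)$; the corresponding $(\tilde v_\eps^{s_\eps,t_\eps},\p_\eps)$ is admissible, converges to $(1,\p)$ in $L^1_{loc}$, and satisfies \eqref{massvphi}.

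The critical point is the energy estimate. Since $v_\eps\equiv 1$ and $\p_\eps$ is locally constant on $B_1\cup B_2$, the Dirichlet and $\p$-gradient contributions are $O(\eps(s_\eps^2+t_\eps^2))$ and the sine potential is untouched; only the double-well potential grows, giving $\tfrac{1}{2\eps}\int\eta_\eps^4[(1-(1+s_\eps\chi_1)^2)^2+(1-(1+t_\eps\chi_2)^2)^2]\,dx=O((s_\eps^2+t_\eps^2)/\eps)$. This vanishes as soon as $|e_\eps^{(i)}|=o(\sqrt{\eps})$, which is automatic for $e_\eps^{(1)}=O(\eps)$. The main obstacle is the rate of $e_\eps^{(2)}$: if the bulk exponent $c$ in \eqref{localC0cvetaepsrho} is below $1/2$, I would first pre-adjust $A$ to $A_\eps:=A\triangle B_{r_\eps}(x_\ast)$, adding or removing a small ball at a fixed $x_\ast\in\D$ away from $\partial A$ of radius $r_\eps=O(\eps^{c/n})$ so that $\int_{A_\eps}\eta_\eps^2=\alpha_2$ exactly; applying Proposition \ref{Glimsup} to $\pi\1_{A_\eps}$ (whose surface tension differs from $\F_\beta(\p)$ by $O(r_\eps^{n-1})\to 0$) reduces both mass errors to $O(\eps)$, after which the two-parameter correction adds only $O(\eps)$, and a diagonal argument in $\eps$ concludes.
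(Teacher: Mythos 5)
Your proof is correct, but it takes a genuinely different route from the paper's. The paper enforces the first constraint in \eqref{massvphi} by a global multiplicative normalisation $\hat v_\eps=\|\eta_\eps v_\eps\|_2^{-1}v_\eps$ (costing only $O(\eps)$ in energy since $\|\eta_\eps v_\eps\|_2^2=1+O(\eps)$), and the second by perturbing the \emph{set} itself, replacing $A$ by $(A\cup B^+_\eps)\setminus B^-_\eps$ with balls of volume up to $\eps^{\gamma}$, $\gamma<\tau:=\min(a,b,c)$, so that the geometric perturbation dominates the $O(\eps^\tau)$ error from \eqref{estoutbulk}--\eqref{localC0cvetaepsrho}; the intermediate value theorem in the radii then gives exact equality, and the extra interface, of measure $O(\eps^{\gamma(n-1)/n})$, contributes nothing in the limit. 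You instead leave $\p_\eps$ alone and perturb the \emph{profile} $v_\eps$ by two localised bumps, solving both constraints at once with a quantitative inverse function theorem; your Jacobian computation and the bound $|(s_\eps,t_\eps)|\le C(|e^{(1)}_\eps|+|e^{(2)}_\eps|)$ are right, and you correctly identify that the only energetic price is the double-well term $O((s_\eps^2+t_\eps^2)/\eps)$, which makes your argument rate-sensitive: it closes unaided only if the mass errors are $o(\sqrt\eps)$, and since Hypothesis \ref{hypo} does not guarantee $c\ge 1/2$ (nor $a\ge 1/4$), your fallback pre-adjustment of $A$ by a small ball calibrated so that $\int_{A_\eps}\eta_\eps^2=\alpha_2$ is genuinely needed; after it, $\|\eta_\eps\|_2=1$ reduces $e^{(2)}_\eps$ to the $O(\eps)$ transition-layer contribution, as you say. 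The trade-off is clear: the paper's set perturbation is softer (no rate analysis, no quadratic penalty, but it couples less cleanly with the first constraint, hence the separate normalisation), while your IFT treats the coupled system transparently at the cost of reintroducing the geometric device in the worst case. One caveat, which your proof shares with the paper's: modifying $A$ by small balls adds a boundary term that vanishes only for $n\ge 2$ (for $n=1$ each new endpoint contributes a fixed $\sigma_\beta(x_*)$), so neither argument, as written, covers $n=1$ without replacing the ball insertion by a translation of the existing interface.
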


\begin{proof} Notice first that since $\ds \int_A \rho \, dx=\alpha_2>0$ and $\ds\int_{A^c} \rho \, dx=\alpha_1>0$, there exist $x^+$ in $ A$ and $x^-$ in $\D \setminus \overline{A}$. With the notations of Proposition \ref{Glimsup}, consider $( v_\eps,\p_\eps)$  as in (\ref{defrecseq}) with $d$ given by the signed distance to $A_\eps := (A \cup B^+_\eps) \setminus B^-_\eps$, where $B^{\pm}_\eps := B(x^\pm,\delta_\eps^\pm)$ and

\be \label{defbeta}
	0 \le \delta_\eps^\pm  \le \eps^{\gamma/n} \quad \text{ with } \quad \gamma \in (0,1) \,. 
\ee 

Defining $\hat v_\eps = \| \eta_\eps   v_\eps \|^{-1}_2   v_\eps $, the first equality in (\ref{massvphi}) holds. Using   $ \| \eta_\eps  \|_2=1 $ we estimate

\be  \label{estnorme2}
	 \| \eta_\eps   v_\eps \|^{2}_2 = 1 + \int_{  W_\eps }  \eta_\eps^2(  v_\eps^2-1) \, dx = 1 +O(\eps) \,. 
\ee 

Hence, the sequence $(\hat v_\eps,\p_\eps)$ converges to $ (1,\p) $ in $L^1_{loc}(\D) \times L^1_{loc}(\D) $ and inequality (\ref{gammalimsup}) still holds.\\   

Using estimates (\ref{etainfty})-(\ref{localC0cvetaepsrho})  we get   

\ban
	 \int_{\R^n}  \eta_\eps^2  v_\eps^2 \cos \p_\eps\, dx &=&  \int_{\R^n}  \eta_\eps^2 (-\1_{A_\eps} + \1_{\R^n\setminus A_\eps}) \, dx+   O(\eps) \\  
	 &=&  \int_{\R^n}  \rho (-\1_{A}  + \1_{\D \setminus A}) \, dx +   2 \int_{\R^n} \eta_\eps^2  ( \1_{B^+_\eps}-\1_{B^-_\eps}  ) \, dx  +   O(\eps^\tau) \\ 
 \ean

 where $\tau := \min \{a,b,c  \}>0$. Let $c>0$ be such that $\eta_\eps^2> c$ in $B^-_\eps\cup B^+_\eps$  and fix  $\gamma \in (0,\tau)$.  
 For $\eps$ small enough, thanks to \eqref{defbeta} and \eqref{estnorme2} we obtain for $ ( \delta_\eps^+ ,  \delta_\eps^-) = (\eps^\gamma,0) $,
 \ben
	 \int_{\R^n}  \eta_\eps^2   \hat v_\eps^2 \cos \p_\eps \, dx \ge \alpha_1- \alpha_2 + 2c |B_1|\, \eps^\gamma   + O(\eps^\tau)   > \alpha_1- \alpha_2  
 \een
   and for $ ( \delta_\eps^+ ,  \delta_\eps^-) = (0,\eps^\gamma) $,
 \ben
	 \int_{\R^n}  \eta_\eps^2   \hat v_\eps^2 \cos \p_\eps \, dx\le \alpha_1- \alpha_2 - 2c|B_1| \, \eps^\gamma   +O(\eps^\tau)   < \alpha_1- \alpha_2.    
 \een
  We conclude by continuity that there exists  $ ( \delta_\eps^+ ,  \delta_\eps^-) \in [0;\eps^\gamma] \times   [0;\eps^\gamma]$ such that the second equality in (\ref{massvphi}) is satisfied.

\end{proof}

\section{Asymptotic analysis of the surface tension}\label{Sec:surfasympt}
In this section we study the asymptotic behavior of $\sigbrb$ when $\beta$ tends to zero or infinity.
\subsection{Vanishing $\beta$}
When $\beta$ goes to zero, we expect the two condensates not to segregate anymore. This can be seen as an interpretation of the following theorem which shows that in the limit $\beta\to 0$, the surface tension $\sigbrb$ vanishes.
\begin{theo}
 The functional $\Eund $ $\Gamma$-converges when $\beta\to 0$ to 
\[\mathcal{G}_{0}(v,\p):= \frac{1}{2}\int_{\R} v'^2 +\Wr(v) + \frac{1}{4} v^2 \p'^2  \, dt,\]
 which is defined on all the pairs of functions $(v,\p)$ with $\p\in [0,\pi]$ (but without conditions at infinity). As a consequence,
\[\lim_{\beta\to 0}\sigbrb =0\] 
\begin{proof}
 Since the compactness and $\Gamma-$liminf inequality are readily obtained, let us focus on the $\Gamma-$limsup. For this, let $(v,\p)$  be such that $\mathcal{G}_{\rho,0}(v,\p)<+\infty$. Let then  $v_\beta:= v$ and 
\[\p_\beta(t):= \begin{cases}
                 0& \textrm{ for } t \in \lt(-\infty,-\frac{2}{\sqrt{\beta}}\rt]\\
		\sqrt{\beta}\p\lt(-\frac{1}{\sqrt{\beta}}\rt) \lt(t+\frac{1}{\sqrt{\beta}}\rt) +\p\lt(-\frac{1}{\sqrt{\beta}}\rt) & \textrm{ for } t\in \lt[-\frac{2}{\sqrt{\beta}},-\frac{1}{\sqrt{\beta}}\rt]\\
		\p(t) & \textrm{ for } t\in \lt[-\frac{1}{\sqrt{\beta}},\frac{1}{\sqrt{\beta}}\rt]\\
	      \sqrt{\beta}\lt(\pi-\p\lt(\frac{1}{\sqrt{\beta}}\rt)\rt) \lt(t-\frac{1}{\sqrt{\beta}}\rt) +\p\lt(\frac{1}{\sqrt{\beta}}\rt) & \textrm{ for } t\in \lt[\frac{1}{\sqrt{\beta}},\frac{2}{\sqrt{\beta}}\rt]\\
		\pi& \textrm{ for } t \in \lt[\frac{2}{\sqrt{\beta}}, +\infty\rt).
                \end{cases}
\] 
A simple computation then shows that 
\[\lt| \Eund(v_\beta,\p_\beta)- \mathcal{G}_{0}(v,\p)\rt|\le C \sqrt{\beta}.\]
\end{proof}

\end{theo}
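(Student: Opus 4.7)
The statement has two parts: the $\Gamma$-convergence of $\Eund$ to $\mathcal{G}_0$ as $\beta \to 0$, and the consequence $\sigbrb \to 0$. As already announced in the statement, compactness and the $\Gamma$-liminf inequality are routine: since the discarded term $\tfrac{\beta}{4}v^4\sin^2\p$ is nonnegative, $\Eund \ge \mathcal{G}_0$ pointwise. Any sequence $(v_\beta,\p_\beta)$ with $\sup_\beta \Eund(v_\beta,\p_\beta)<\infty$ therefore satisfies $\sup_\beta \mathcal{G}_0(v_\beta,\p_\beta)<\infty$, and the same argument as in Proposition \ref{existbeta} ($(1-v_\beta)\in H^1(\R)$, hence $v_\beta\to 1$ at $\pm\infty$ and local uniform convergence of $v_\beta$; local $H^1$ control of $\p_\beta$ wherever $v_\beta$ stays bounded away from $0$) yields local compactness in both components. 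Lower semicontinuity of $\mathcal{G}_0$ then gives the liminf inequality.

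The content is thus the $\Gamma$-limsup, and here I follow the explicit construction written in the statement. For $(v,\p)$ with $\mathcal{G}_0(v,\p)<\infty$ and $\p\in[0,\pi]$, I set $v_\beta:=v$ (no $\beta$-dependent modification of $v$ is needed) and define $\p_\beta$ by keeping $\p_\beta=\p$ on the central segment $[-1/\sqrt{\beta},1/\sqrt{\beta}]$, affinely interpolating on each transition layer $[\pm 1/\sqrt{\beta},\pm 2/\sqrt{\beta}]$ between the trace of $\p$ and the target value $0$ or $\pi$, and extending by that constant beyond. The boundary conditions at $\pm\infty$ then hold and $(v_\beta,\p_\beta)\to(v,\p)$ in $L^1_{\loc}(\R)^2$.

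To compare $\Eund(v_\beta,\p_\beta)$ with $\mathcal{G}_0(v,\p)$ one uses that $\mathcal{G}_0(v,\p)<\infty$ forces $v\in L^\infty(\R)$: indeed $\Wr(v)\gtrsim (1-v)^2$ and $v'\in L^2(\R)$, so $1-v\in H^1(\R)$. The $v'^2$ and $\Wr(v)$ parts of the energy are unchanged because $v_\beta=v$, and $\tfrac14 v^2 \p_\beta'^2 = \tfrac14 v^2 \p'^2$ on the central segment. Three new contributions remain, all $O(\sqrt{\beta})$: on the central interval, $\tfrac{\beta}{4}\int_{|t|\le 1/\sqrt{\beta}} v^4\sin^2\p\,dt \le \tfrac{\sqrt{\beta}}{2}\|v\|_\infty^4$; on each transition layer of length $1/\sqrt{\beta}$, the derivative term $\tfrac14 v^2\p_\beta'^2\le \tfrac{\pi^2\beta}{4}\|v\|_\infty^2$ integrates to $O(\sqrt{\beta})$; and on the same layer, $\tfrac{\beta}{4}v^4\sin^2\p_\beta\le \tfrac{\beta}{4}\|v\|_\infty^4$ integrates to $O(\sqrt{\beta})$. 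Summing gives $|\Eund(v_\beta,\p_\beta)-\mathcal{G}_0(v,\p)|\le C\sqrt{\beta}$, which is the announced estimate.

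For the corollary $\sigbrb\to 0$, no abstract $\Gamma$-convergence machinery is necessary. Fix $\delta>0$ and take a smooth nondecreasing $\p_L$ going from $0$ to $\pi$ on $[-L,L]$; then $\mathcal{G}_0(1,\p_L)\le C/L$. The pair $(1,\p_L)$ is admissible for $\sigbrb$, and the limsup estimate yields $\Eund(1,\p_L)\le C/L + C'\sqrt{\beta}$. Letting $\beta\to 0$ and then $L\to +\infty$ gives $\limsup_{\beta\to 0}\sigbrb\le 0$, concluding the proof. The only mildly subtle point in the whole argument is the bound $\|v\|_\infty<\infty$ from $\mathcal{G}_0$-finiteness; everything else is elementary one-dimensional bookkeeping.
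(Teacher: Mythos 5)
Your proof takes essentially the same route as the paper: the identical recovery sequence (keep $v_\beta=v$, keep $\p$ on $[-1/\sqrt{\beta},1/\sqrt{\beta}]$ and interpolate affinely on layers of length $1/\sqrt{\beta}$), with the ``simple computation'' that the paper omits written out correctly, the key point being $\|v\|_\infty<\infty$ from $1-v\in H^1(\R)$. The only cosmetic remark is that the two-sided estimate $\lt|\Eund(v_\beta,\p_\beta)-\mathcal{G}_0(v,\p)\rt|\le C\sqrt{\beta}$ really only holds as the one-sided bound $\Eund(v_\beta,\p_\beta)\le \mathcal{G}_0(v,\p)+C\sqrt{\beta}$, since the discarded tail $\int_{|t|>1/\sqrt{\beta}}v^2\p'^2\,dt$ need not decay at rate $\sqrt{\beta}$ --- but that one-sided bound is all the $\Gamma$-limsup and your (slightly more explicit, and clean) direct argument for $\lim_{\beta\to 0}\sigbrb=0$ require.
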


\begin{rk}\rm
 From the proof, we see that $\sigbrb\le C \sqrt{\beta}$ which is exactly the scaling predicted in the physics literature \cite{Aochui,Timmermans,Mazets,barankov}.
\end{rk}

\subsection{Study of $\beta\to +\infty$ and symmetry breaking}
In this section we study the behavior of the limiting energy when $\beta\to +\infty$. We   prove that in this case, we recover the functional

\ben
    \mathcal{F}_\infty (\p) = \int_{\D}  \frac{\sigma_\infty}{\pi} |D\p|      
\een

derived in \cite{AftRoyo}, where $\sigma_\infty(x) := \frac{2\sqrt{2}}{3}\rho^{3/2}(x)$.\\

%

Let us prove that $\lim_{\beta \to \infty} \sigbrb=  {\ov \sigma}_\infty:= \frac{2\sqrt{2}}{3} $ with a rate of approximation of the order of $\beta^{-1/4}$ as predicted in the physical literature \cite{BVS}.
\begin{prop} \label{propsb}
 
\[ {\ov \sigma}_\infty \ge \sigbrb \ge {\ov \sigma}_\infty - \sqrt{2}\, C\, \beta^{-1/4}.\]
In particular, $\lim_{\beta\to +\infty}\sigbrb= {\ov \sigma}_\infty$. 
\end{prop}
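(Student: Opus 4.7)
First I would establish the upper bound $\sigbrb \le {\ov \sigma}_\infty$ using the degenerate test pair from the proof of Proposition \ref{away} with $m=0$. The profile $v_{0,T}$ equals the Modica--Mortola transition $\tanh(\sqrt{1/2}(|t|-T))$ for $|t|>T$ and is identically zero on $[-T,T]$, while $\phi_T$ interpolates linearly between $0$ and $\pi$ on that interval. Because $v_{0,T}\equiv 0$ on $[-T,T]$, both coupling terms $\tfrac14 v^2\phi'^2$ and $\tfrac{\beta}{4}v^4\sin^2\phi$ vanish there, and they also vanish off $[-T,T]$ where $\phi_T$ is constant. Specializing \eqref{a} to $m=0$ gives
\[
\Eund(v_{0,T},\phi_T) \;=\; \tfrac{2\sqrt{2}}{3} + \tfrac{T}{2} \;=\; {\ov \sigma}_\infty + \tfrac{T}{2},
\]
and letting $T\to 0^+$ yields $\sigbrb \le {\ov \sigma}_\infty$.

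For the lower bound $\sigbrb \ge {\ov \sigma}_\infty - \sqrt{2}\,C\,\beta^{-1/4}$, I would reuse the estimate already established in the proof of Proposition~\ref{away}: after translating so that $\inf_{\R} v = v(0) = m$, every admissible pair satisfies
\[
\Eund(v,\phi) \;\ge\; \sqrt{2}\Bigl(\tfrac{2}{3} - m + m^{3}\bigl(\tfrac{1}{3} + \tfrac{\beta^{1/2}}{2\sqrt{2}}\bigr)\Bigr).
\]
Setting $A := \tfrac{1}{3} + \tfrac{\beta^{1/2}}{2\sqrt{2}}$, I would minimize the right-hand cubic $f(m) = \tfrac{2}{3} - m + A m^{3}$ over $m \in [0,1]$: the critical point is at $m_\ast = (3A)^{-1/2}$, which lies in $(0,1]$, and a direct substitution gives $f(m_\ast) = \tfrac{2}{3} - \tfrac{2}{3}(3A)^{-1/2}$. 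Since $3A \ge \tfrac{3}{2\sqrt{2}}\beta^{1/2}$, one has $(3A)^{-1/2} \le (2\sqrt{2}/3)^{1/2}\,\beta^{-1/4}$, and therefore
\[
\sigbrb \;\ge\; \sqrt{2}\,f(m_\ast) \;\ge\; {\ov \sigma}_\infty - \sqrt{2}\,C\,\beta^{-1/4}
\]
with the explicit universal constant $C = (2/3)(2\sqrt{2}/3)^{1/2}$.

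The main obstacle is really only the lower bound, but its technical core, namely the bound $\int_{\R}|\sin\phi||\phi'|\,dt \ge 2$ combined with the AM--GM inequality applied to the last two terms of $\Eund$, is already contained in the proof of Proposition~\ref{away}; all that remains is the elementary one-dimensional optimization of $f$. Note that the rate $\beta^{-1/4}$ is sharp because the optimal $m_\ast$ is forced to scale like $\beta^{-1/4}$, in agreement with the decay estimate of Proposition~\ref{decayinf}.
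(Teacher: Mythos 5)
Your proof is correct. The upper bound is exactly the paper's argument: evaluate the test pair of Proposition \ref{away} at $m=0$ (equivalently, take $m=0$ in \eqref{eqEm}), where the degeneracy $v\equiv 0$ on $[-T,T]$ kills both coupling terms and the energy is $\frac{2\sqrt2}{3}+\frac{T}{2}$. For the lower bound the paper takes a slightly different route: it first invokes Proposition \ref{decayinf} to assert that any minimiser $v_\beta$ (whose existence comes from Proposition \ref{existbeta}) satisfies $\inf v_\beta\le C\beta^{-1/4}$, and then simply drops the positive cubic term in $\sqrt2\bigl(\frac23-m+m^3(\frac13+\frac{\beta^{1/2}}{2\sqrt2})\bigr)$. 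You instead minimise that cubic directly over all $m\in[0,1]$, finding the critical point $m_*=(3A)^{-1/2}$ with $A=\frac13+\frac{\beta^{1/2}}{2\sqrt2}$ and value $\frac23-\frac23(3A)^{-1/2}\ge\frac23-\frac23(2\sqrt2/3)^{1/2}\beta^{-1/4}$. This is a genuine (if modest) simplification: it bypasses both the existence of a minimiser and Proposition \ref{decayinf}, works uniformly over all admissible pairs, and produces an explicit constant $C$; the trade-off is that the paper's version also records the structural information that the optimal depth $\inf v_\beta$ itself scales like $\beta^{-1/4}$, which is used elsewhere and matches the physics prediction. Both arguments rest on the same key inequality from the proof of Proposition \ref{away}, so the substance is shared.
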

\begin{proof}
 The upper bound is a consequence of \eqref{eqEm} with $m=0$. For the lower bound, we first notice that from Lemma \ref{decayinf}, we know that for every minimiser $v_\beta$ of $\sigbrb$, there holds $\inf v_\beta\le C \beta^{-1/4} $ so that as in the proof of Proposition \ref{away},
\[\sigbrb\ge \sqrt{2} \left(\frac{2}{3}-\inf v_\beta+ (\inf v_\beta)^3\left(\frac{1}{3}+ \frac{ \beta^{1/2}}{2\sqrt{2}}\right)\right)
\ge {\ov \sigma}_\infty - \sqrt{2}\, C\, \beta^{-1/4}.\]
\end{proof}

We then easily deduce the convergence of the full energy:
\begin{prop} \label{Gcvbeta}
 The $\Gamma$-limit in $L^1_{loc}(\D)$ as $\beta \to +\infty$ of $ \mathcal{F}_\beta$ is $\mathcal{F}_\infty$.
\end{prop}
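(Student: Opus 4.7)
The plan is to reduce the $\Gamma$-convergence to the scalar asymptotic already proved in Proposition~\ref{propsb}. On $BV_{\loc}(\D;\{0,\pi\})$ both functionals are multiples of the same weighted total variation,
\[
\Fb(\p) = \frac{\sigbrb}{\pi} \int_{\D} \rho^{3/2}\,|D\p|,\qquad \mathcal{F}_\infty(\p) = \frac{\ov\sigma_\infty}{\pi} \int_{\D} \rho^{3/2}\,|D\p|,
\]
and both are $+\infty$ otherwise. Hence $\Fb = (\sigbrb/\ov\sigma_\infty)\,\mathcal{F}_\infty$, and Proposition~\ref{propsb} gives $\sigbrb\to \ov\sigma_\infty$, so one expects the $\Gamma$-convergence to follow immediately from this multiplicative relation and a lower semicontinuity argument.

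For the $\Gamma$-$\limsup$, I would simply take the constant recovery sequence $\p_\beta := \p$. It converges to $\p$ in $L^1_{\loc}(\D)$ trivially, and $\Fb(\p_\beta) = (\sigbrb/\ov\sigma_\infty)\mathcal{F}_\infty(\p) \to \mathcal{F}_\infty(\p)$ by Proposition~\ref{propsb} (if $\mathcal{F}_\infty(\p)=+\infty$ there is nothing to prove).

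For the $\Gamma$-$\liminf$, let $\p_\beta\to \p$ in $L^1_{\loc}(\D)$ and assume, along a subsequence realising the $\liminf$, that $\liminf_{\beta\to\infty} \Fb(\p_\beta)<\infty$. Then $\p_\beta \in BV_{\loc}(\D;\{0,\pi\})$, and for $\beta$ large enough Proposition~\ref{propsb} ensures $\sigbrb \ge \ov\sigma_\infty/2$, so that the weighted total variations $\int_\D \rho^{3/2}|D\p_\beta|$ are uniformly bounded. Since $\rho \ge c_K>0$ on any compact $K\subset\subset \D$, this uniform bound passes to $\int_K |D\p_\beta|$, which together with $L^1_{\loc}$-convergence forces $\p\in BV_{\loc}(\D;\{0,\pi\})$. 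The weighted total variation $\psi \mapsto \int_\D \rho^{3/2}|D\psi|$ is lower semicontinuous for $L^1_{\loc}(\D)$-convergence, because of its dual characterization
\[
\int_\D \rho^{3/2} |D\psi| = \sup\Bigl\{ -\int_\D \psi\,\Div g\, dx \,:\, g\in C^1_c(\D;\R^n),\ |g(x)|\le \rho(x)^{3/2}\Bigr\},
\]
a supremum of $L^1_{\loc}$-continuous linear functionals (the continuity of $\rho^{3/2}$ on $\D$ making the constraint pointwise admissible). Combining this lower semicontinuity with $\sigbrb\to \ov\sigma_\infty$ yields $\liminf_{\beta\to\infty}\Fb(\p_\beta)\ge \mathcal{F}_\infty(\p)$.

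I do not foresee any real obstacle: the quantitative bound in Proposition~\ref{propsb} does all the work, and the only technical ingredient is the standard $L^1_{\loc}$-lower semicontinuity of a weighted total variation with a continuous weight that vanishes on $\partial\D$.
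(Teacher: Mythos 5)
Your argument is correct and is precisely the intended one: the paper offers no proof of Proposition \ref{Gcvbeta} (it is stated as an easy consequence of Proposition \ref{propsb}), and the reduction you use — $\Fb=(\sigbrb/\ov\sigma_\infty)\,\mathcal{F}_\infty$ with $\sigbrb\to\ov\sigma_\infty$, the constant recovery sequence for the $\limsup$, and $L^1_{\loc}$-lower semicontinuity of the weighted total variation for the $\liminf$ — is exactly what "easily deduce" refers to.
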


Let us now concentrate on the harmonic potential $V=|x|^2$ and  let us study the minimisers of $\F_\infty$ under the mass constraint \eqref{mcfinal} 
to show the symmetry breaking. Let us point out again that since the functional $\mathcal{F}_\beta$ differs from $\mathcal{F}_\infty$ only by a (multiplicative) constant, 
the minimizers of the two functionals coincide. In particular, they do not depend on $\beta$. To prove symmetry breaking, we closely follow the ideas of  \cite[Cor. 1.3]{AftRoyo}, where such a
 result was derived for 
$n=2$. Let us first prove that the minimizer among radially symmetric sets is either  the centered ball or the outside annulus.
\begin{prop}
 Let $\alpha\in[0,1]$ and let $1\ge R_\alpha\ge 0$ be such that $\ds\int_{B_{\lambda R_\alpha}} \rho\, dx \, =\, \alpha$ then letting $f(\alpha):=\F_\infty(\pi \1_{B_{\lambda R_\alpha}})$,
\[
\min\{ \F_\infty(A) \; : \; A \textrm{ radially symmetric and satisfies \eqref{mcfinal}} \}=\min\lt(f(\alpha),f(1-\alpha)\rt) 
 \]
\end{prop}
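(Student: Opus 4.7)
The plan is to reduce this radial problem to a one-dimensional weighted isoperimetric problem on $[0,1]$ via mass-normalized coordinates, then exploit the concavity of the transformed weight to bound the cost below. Any radially symmetric $A$ of finite perimeter has the form $A = \{x\in\D : |x| \in S\}$ with $S = \bigcup_i (a_i, b_i) \subset [0,\lambda]$. Introduce the mass function $m(r) := \int_{B_r}\rho\,dx = n\omega_n \int_0^r (\lambda^2-s^2)s^{n-1}\,ds$, a smooth bijection $[0,\lambda] \to [0,1]$; then the constraint \eqref{mcfinal} reads $\int_A \rho\,dx = \alpha$, i.e., $|\tilde S| = \alpha$ for $\tilde S := m(S) = \bigcup_i (\tilde a_i, \tilde b_i) \subset [0,1]$. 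Setting $g(r) := \sigma_\infty(r)\, n\omega_n r^{n-1}$ and $G := g \circ m^{-1}$, the functional rewrites as
\[
\F_\infty(\pi \1_A) \;=\; \sum_i \bigl[G(\tilde a_i)\1_{\tilde a_i>0} + G(\tilde b_i)\1_{\tilde b_i<1}\bigr],
\]
and the two candidates correspond to $\tilde S = [0,\alpha]$ (the ball $B_{\lambda R_\alpha}$, cost $G(\alpha) = f(\alpha)$) and $\tilde S = [1-\alpha, 1]$ (the outer annulus $\D\setminus B_{\lambda R_{1-\alpha}}$, cost $G(1-\alpha) = f(1-\alpha)$).

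The key analytic step is that $G$ is concave on $[0,1]$ with $G(0) = G(1) = 0$. Since $g = \mathrm{const}\cdot\rho^{3/2} r^{n-1}$ and $m' = \mathrm{const}\cdot\rho\, r^{n-1}$ with $\rho(r) = \lambda^2-r^2$, one computes
\[
G'(\tilde r) \;=\; \frac{g'(r)}{m'(r)} \;=\; \mathrm{const}\cdot\frac{(n-1)\lambda^2 - (n+2)r^2}{r\sqrt{\rho(r)}},
\]
and a further differentiation (using $dr/d\tilde r = 1/m'(r)$), after the cancellations involving $\rho$ and $r^{n-1}$, yields
\[
G''(\tilde r) \;=\; \mathrm{const}\cdot\frac{\lambda^2\bigl[(n-4)r^2 - (n-1)\lambda^2\bigr]}{r^{n+1}\rho(r)^{5/2}},
\]
whose bracket is non-positive on $[0,\lambda]$ for every $n \ge 1$ by an immediate case split on the sign of $n-4$. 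Hence $\tilde G(t) := \min(G(t), G(1-t))$ is also concave on $[0,1]$ (pointwise minimum of concave functions), with $\tilde G(0) = \tilde G(1) = 0$ and $\tilde G(\alpha) = \min(f(\alpha), f(1-\alpha))$.

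To conclude, fix any admissible $\tilde S$ with components of lengths $\ell_i$ summing to $\alpha$. For a component $(\tilde a, \tilde a + \ell)$ with both endpoints in $(0,1)$, the map $a \mapsto G(a) + G(a+\ell)$ is concave on $[0, 1-\ell]$, so its minimum there is attained at an endpoint, giving $G(\tilde a) + G(\tilde a + \ell) \ge \min(G(\ell), G(1-\ell)) = \tilde G(\ell)$. For a component anchored at $0$ or at $1$, the single cost term $G(\ell)$ or $G(1-\ell)$ is trivially $\ge \tilde G(\ell)$. Summing over components gives $\F_\infty(\pi\1_A) \ge \sum_i \tilde G(\ell_i)$, and since a concave function vanishing at the origin is subadditive, $\sum_i \tilde G(\ell_i) \ge \tilde G(\sum_i \ell_i) = \tilde G(\alpha) = \min(f(\alpha), f(1-\alpha))$, which matches the value achieved by the two candidates. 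The principal obstacle will be the concavity of $G$: the computation of $G''$ is lengthy and its sign condition relies on a delicate cancellation tied to the precise exponents in $\sigma_\infty \propto \rho^{3/2}$ and the density $\rho$ in $m'$; once this is in hand, the remainder is a clean application of the subadditivity of concave functions vanishing at zero.
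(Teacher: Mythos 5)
Your proof is correct and follows essentially the same route as the paper: the decisive step in both is the strict concavity of the cost of a sphere as a function of the enclosed mass (your $G$ is the paper's $f$, and your expression for $G''$ agrees, after the substitution $r=\lambda R_\alpha$, with the paper's formula for $f''$), and your endpoint-sliding plus subadditivity packaging is just a cleaner phrasing of the paper's one-annulus argument followed by induction on the number of annuli and approximation. The only point you gloss over is that for $n=1$ the function $G$ is discontinuous at $0$ (since $g(0)=2\sigma_\infty(0)\neq 0$ while the empty set has zero cost), which the paper addresses in a footnote; extending by $G(0)=0$ still preserves concavity, so your subadditivity argument survives unchanged.
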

\begin{proof}
 Let us first notice that $R_\alpha$ is determined by $\ds \alpha= \HH(\Sn)\lambda^{n+2} \int_0^{R_\alpha} (1-r^2) r^{n-1} dr$, so that 
\begin{equation}\label{derivR}
R_\alpha'=\lt(\HH(\Sn)\lambda^{n+2} (1-R_\alpha^2)R_\alpha^{n-1}\rt)^{-1}\end{equation}
where by a slight abuse of notation we identified $R_\alpha$ with the function $\alpha\to R_\alpha$. A simple computation shows that for $\alpha\in(0,1)$,
\begin{equation}\label{enerball}
 f(\alpha)=\frac{2\sqrt{2}}{3} \HH(\Sn)\lambda^{n+2} R_\alpha^{n-1} (1-R_\alpha^2)^{3/2}
\end{equation}
 and $f(0)=f(1)=0$. It then follows from \eqref{derivR} that for $\alpha\in (0,1)$,
\[f''(\alpha)=-\frac{2\sqrt{2}}{3\HH(\Sn) \lambda^{n+2}} (1-R_\alpha^2)^{-5/2} R_\alpha^{-(n+1)}\lt((n-1)(1-R_\alpha^2) +3 R_\alpha^2\rt)<0\]
and thus $f$ is strictly concave \footnote{notice that for $n=1$, $f$ is discontinuous at $0$ but is still strictly concave since $f\ge f(0)$ and $f$ is strictly concave in $(0,1)$}.\\

Let now $A(R_1,R_2):=\{\lambda R_1< |x|\le \lambda R_2\}$ be an annulus with $0<R_1<R_2<1$ and $\ds \int_{A(R_1,R_2)} \rho \, dx =\alpha$, then letting
\[\beta_1:= \int_{B_{\lambda R_1}} \rho\, dx \qquad \textrm{ and } \qquad \beta_2:= \int_{\D\backslash B_{\lambda R_2}} \rho\, dx\]
we have $\beta_1+\beta_2=1-\alpha$ and 
\[f_1(\beta_1):=\F_\infty\lt(\pi \1_{A(R_1,R_2)}\rt)= f(\beta_1)+f(\beta_1+\alpha)\]
is a strictly concave function of $\beta_1$ and thus attains its minimum for $\beta_1=0$ or $\beta_1=1-\alpha$. This proves that 
\[\inf_{R_1,R_2} \F_{\infty} (\pi \1_{A(R_1,R_2)})=\min( f(\alpha),f(1-\alpha)).\]
As in \cite{AftRoyo}, by induction it implies that any union of $m\in \N$ annuli has energy larger than $\min( f(\alpha),f(1-\alpha))$ which in turn by approximation
 implies that any radially symmetric set has energy at least $\min( f(\alpha),f(1-\alpha))$.  
\end{proof}
In order to show symmetry breaking it is thus enough to construct a non radially symmetric set with energy smaller than $\min(f(\alpha),f(1-\alpha))$.
\begin{prop}\label{brissym}
 Let $n=1,2$ or $3$ then there exists $\alpha_0\in (0,1/2)$ such that if $\alpha\in(\alpha_0,1-\alpha_0)$, the minimizers of $\F_\infty$ under the mass constraint \eqref{mcfinal} are not radially symmetric.
\end{prop}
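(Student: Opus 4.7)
The strategy is to exhibit, for $\alpha$ close to $1/2$, a non-radially symmetric competitor whose $\F_\infty$-energy is strictly below the radial infimum $\min(f(\alpha), f(1-\alpha))$ from the preceding proposition. Since any minimizer has energy at most this strict upper bound while every radial competitor has energy at least the infimum, the existence of such a competitor rules out radially symmetric minimizers.

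The natural candidate is a half-space cut. For $c \in (-\lambda, \lambda)$ set
\begin{equation*}
A_c := \{x \in \D : x \cdot e_1 > c\}, \qquad m(c) := \int_{A_c} \rho \, dx,
\end{equation*}
so that $m$ is a continuous, strictly decreasing bijection from $(-\lambda, \lambda)$ onto $(0, 1)$ with $m(0) = 1/2$; denote its inverse by $c(\alpha) := m^{-1}(\alpha)$. The reduced boundary of $A_c$ in $\D$ is the hyperplane slice $\{x_1 = c\} \cap B_\lambda$, so polar coordinates on this slice yield
\begin{equation*}
G_n(c) := \F_\infty(\pi \1_{A_c}) = \frac{2\sqrt 2}{3}\, \H^{n-2}(\S^{n-2}) \int_0^{\sqrt{\lambda^2 - c^2}} (\lambda^2 - c^2 - r^2)^{3/2}\, r^{n-2}\, dr
\end{equation*}
for $n \in \{2, 3\}$, with the convention $G_1(c) := \frac{2\sqrt 2}{3}(\lambda^2 - c^2)^{3/2}$ when $n = 1$ (where the slice is a single point).

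The heart of the proof is the strict inequality $G_n(0) < f(1/2)$ for each $n \in \{1, 2, 3\}$. At $\alpha = 1/2$, $R_{1/2}$ is characterised by $\int_0^{R_{1/2}}(1 - r^2) r^{n-1} dr = \tfrac{1}{n(n+2)}$, and the integral defining $G_n(0)$ is evaluated explicitly via the substitution $r = \lambda \sin\theta$ together with standard beta-function identities. The case $n = 2$ is already treated in \cite[Cor.\ 1.3]{AftRoyo}. For $n = 1$ the inequality becomes $1 < 2(1 - R_{1/2}^2)^{3/2}$, where $R_{1/2}$ is the unique root in $(0, 1)$ of $R - R^3/3 = 1/3$; numerically $R_{1/2} \simeq 0.347$, so $(1 - R_{1/2}^2)^{3/2} \simeq 0.82$ and the inequality holds with a comfortable margin. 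For $n = 3$ it becomes $\tfrac{1}{10} < R_{1/2}^2 (1 - R_{1/2}^2)^{3/2}$, where $R_{1/2}$ is the unique root in $(0, 1)$ of $5R^3 - 3R^5 = 1$; here $R_{1/2} \simeq 0.645$ and the right-hand side is $\simeq 0.19$, so again the inequality holds.

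Finally, both $\alpha \mapsto G_n(c(\alpha))$ and $\alpha \mapsto \min(f(\alpha), f(1 - \alpha))$ are continuous on $(0, 1)$ and equal $G_n(0)$ and $f(1/2)$, respectively, at $\alpha = 1/2$. The strict inequality therefore persists on some symmetric open neighborhood $(\alpha_0, 1 - \alpha_0)$ of $1/2$, on which $A_{c(\alpha)}$ is a non-radially symmetric competitor of mass $\alpha$ beating every radial one, yielding the required symmetry breaking. The main obstacle is the dimension-dependent numerical comparison at $\alpha = 1/2$; every other step (half-space construction, polar decomposition on the slice, continuity in $\alpha$) is routine.
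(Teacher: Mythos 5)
Your proposal is correct and follows the same overall strategy as the paper: exhibit a non-radial competitor with energy strictly below the radial infimum $\min(f(\alpha),f(1-\alpha))$, reduce to $\alpha=1/2$ by continuity, and verify the strict inequality there by an explicit dimension-dependent computation (your characterisations of $R_{1/2}$ and your numerical values agree with the paper's). The only divergence is the choice of competitor for $n=3$: you cut by a half-space $\{x_1>c\}$, whereas the paper uses a wedge $\{0<\theta<\theta_\alpha\}$ in cylindrical coordinates. At $\alpha=1/2$ the two have identical energy, since in both cases the weighted boundary integral reduces to $\frac{2\sqrt2}{3}\int_{B_\lambda^2}(\lambda^2-|y|^2)^{3/2}\,dy=\frac{2\sqrt2}{3}\frac{2\pi}{5}\lambda^5$, so the numerical test is the same. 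The wedge has the minor advantage that its energy is independent of $\alpha$, while your half-space is more uniform across dimensions (it is exactly the paper's $n=1$ construction and the one used in \cite{AftRoyo} for $n=2$); both choices make the final continuity argument routine.
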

\begin{proof}
 For $n=2$, the proof is already given in \cite[Cor. 1.3]{AftRoyo}. For $n=1$, consider the interval $A_\alpha:=(-\lambda, t_\alpha]$ where
 $t_\alpha$ is chosen so that $\ds \int_{-\lambda}^{t_\alpha} (\lambda^2-|x|^2) dx =\alpha$. We then have $\F_\infty(\pi \1_{A_{\alpha}})=\frac{2\sqrt{2}}{3} \sqrt{\lambda^2-t_\alpha^2}$. By continuity of
 $\F_\infty(\pi \1_{A_{\alpha}})$ and $f$ with respect to $\alpha$, it is enough showing that $\F_\infty(\pi \1_{A_{\alpha}})<\min(f(\alpha),f(1-\alpha))$
for $\alpha=1/2$ so that on the one hand  $t_\alpha=0$ and $\F_\infty(\pi \1_{A_{\alpha}})=\frac{2\sqrt{2}}{3}\lambda^3$ and on the other hand $f(1/2)=\frac{2\sqrt{2}}{3} 2\lambda^3 (1-R_\alpha^2)^{3/2}$. It is thus enough checking that
\[2(1-R_\alpha^2)^{3/2}>1.\]
We find that $R_\alpha \approx 0.35$  and thus $2(1-R_\alpha^2)^{3/2}\approx 1.65 >1$.\\
For $n=3$, let us consider in cylindrical coordinates the set $A_\alpha:=\{(r \exp(i\theta),z) \; : \; r\in(0, \lambda), \, \theta\in(0,\theta_\alpha), \, z\in(-\lambda,\lambda)\}$ where $\theta_\alpha$ is such that \eqref{mcfinal} is satisfied. It is readily seen that $\mathcal{F}_\infty(\pi \1_{A_\alpha})=\frac{2\sqrt{2}}{3} \frac{2\pi}{5} \lambda^5 $
 (notice that it does not depend on $\alpha$). As for $n=1$, it is enough to compare it with $f(1/2)=\frac{2\sqrt{2}}{3} 4\pi \lambda^5 R_\alpha^2 (1-R_\alpha^2)^{3/2}$ so that we are left to check that
\[10 R_\alpha^2 (1-R_\alpha^2)^{3/2} >1.\]
We find $R_{\alpha}\approx 0.64$ and thus $10 R_\alpha^2 (1-R_\alpha^2)^{3/2}\approx 1.86 >1$.

\end{proof}

 Using the properties of $\Gamma$-convergence  we derive the analogous result
for the minimisers of $\Feb$ with  $\eps$ small enough:

\begin{cor}
Let $n=1,2$ or $3$ and $V=|x|^2$.  There exists $\delta_0 \in (0,1/2) $ such that for $\alpha_1 \in [\delta_0 , 1-\delta_0 ]$  and $\beta > 0$,  there exists $\eps(\beta)>0$ such that for $0<\eps<\eps(\beta)$, 
the minimisers of $\Feb$ under the constraint \eqref{massvphi} are not radially symmetric.
 \end{cor}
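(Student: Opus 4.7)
The plan is to argue by contradiction using Theorem \ref{gammavctheo} combined with Proposition \ref{brissym}. First I set $\delta_0 := \alpha_0$, with $\alpha_0$ from Proposition \ref{brissym}, and use the identity
\[\F_\beta(\p) \, = \, \frac{\sigbrb}{{\ov\sigma}_\infty}\,\F_\infty(\p) \qquad \forall\, \p\in BV_{loc}(\D;\{0,\pi\}),\]
already observed in the introduction, which shows that $\F_\beta$ and $\F_\infty$ share the same minimizers under the constraint \eqref{mcfinal}. In particular, by Proposition \ref{brissym}, every minimizer of $\F_\beta$ under \eqref{mcfinal} is non-radial whenever $\alpha_1\in[\delta_0,1-\delta_0]$.

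Fix such an $\alpha_1$ and $\beta>0$ and suppose, for contradiction, that there exist $\eps_k\to 0$ and radially symmetric minimizers $(v_{\eps_k},\p_{\eps_k})$ of $\Feb$ under \eqref{massvphi}. Let $\p^*$ be a minimizer of $\F_\beta$ under \eqref{mcfinal}, and let $(\tilde v_\eps,\tilde \p_\eps)$ be a recovery sequence provided by the $\Gamma$-limsup part of Theorem \ref{gammavctheo} (via Lemma \ref{mclemma}), i.e.\ satisfying \eqref{massvphi} and $\eps\,\Feb(\tilde v_\eps,\tilde\p_\eps)\to \F_\beta(\p^*)$. Minimality then gives $\limsup_k \eps_k\,\Feb(v_{\eps_k},\p_{\eps_k}) \leq \F_\beta(\p^*)$, so the energies are uniformly bounded. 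Invoking the compactness proposition of Section \ref{Sec:gammacv}, along a subsequence $(v_{\eps_k},\p_{\eps_k}) \to (1,\p)$ in $L^1_{loc}(\D)\times L^1_{loc}(\D)$ for some $\p\in BV_{loc}(\D;\{0,\pi\})$ satisfying \eqref{mcfinal}, and the $\Gamma$-liminf inequality gives $\F_\beta(\p)\leq \F_\beta(\p^*)$; hence $\p$ is itself a minimizer of $\F_\beta$.

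To reach the contradiction it is enough to show that $\p$ is radial. Since $V=|x|^2$, the domain $\D$ is a ball, hence invariant under every rotation $R\in O(n)$; each $\p_{\eps_k}$ being radial, $\p_{\eps_k}\circ R = \p_{\eps_k}$ on $\D$, so for every compact $K\subset \D$,
\[\|\p\circ R-\p\|_{L^1(K)} \,\leq\, \|\p\circ R-\p_{\eps_k}\circ R\|_{L^1(K)}+\|\p_{\eps_k}-\p\|_{L^1(K)} \,=\, 2\,\|\p_{\eps_k}-\p\|_{L^1(K)}\;\longrightarrow\; 0,\]
so $\p\circ R=\p$ a.e.\ on $\D$ for every $R\in O(n)$, i.e.\ $\p$ is radial, contradicting the non-radiality of minimizers of $\F_\beta$ established in the first step. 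The main conceptual step is the reduction to $\F_\infty$ via the proportionality identity, after which the symmetry breaking at finite $\beta$ and small $\eps$ follows from Proposition \ref{brissym} by the standard $\Gamma$-convergence argument; no serious technical obstacle is expected, the only mildly delicate point being the preservation of radial symmetry under $L^1_{loc}$ limits, which is handled by the elementary estimate above.
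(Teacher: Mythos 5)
Your proof is correct and is exactly the standard $\Gamma$-convergence argument the paper invokes without writing out (the paper's ``proof'' of this corollary is the single sentence that it follows from the properties of $\Gamma$-convergence together with Proposition \ref{brissym}): recovery sequence plus minimality gives the uniform energy bound, compactness and the liminf inequality show a subsequential $L^1_{loc}$ limit is a minimizer of $\F_\beta$ satisfying \eqref{mcfinal}, and radial symmetry passes to the limit, contradicting the non-radiality of minimizers of $\F_\beta=\frac{\sigbrb}{\ov\sigma_\infty}\F_\infty$. The only cosmetic point is that $\|\p\circ R-\p_{\eps_k}\circ R\|_{L^1(K)}$ equals $\|\p-\p_{\eps_k}\|_{L^1(R(K))}$ rather than $\|\p-\p_{\eps_k}\|_{L^1(K)}$, which changes nothing since $R(K)$ is again a compact subset of the ball $\D$.
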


\section*{Acknowledgment}
We thank P. Bella for pointing out the paper \cite{CherMur} and for suggesting the equipartition of the energy for the optimal profile. We are very grateful to B. Van Schaeybroeck for 
drawing our attention to the physics literature concerning the computation of the surface tension. 
We thank B. Merlet for pointing out a small mistake in Proposition \ref{propertymin}. The  authors wish to warmly thank the hospitality of the
`Max Planck Institut f\"ur Mathematik' in Leipzig, where this work was started.
 \bibliography{BEC}

\begin{thebibliography}{10}

\bibitem{AfLivre}
A.~Aftalion.
\newblock {\em Vortices in Bose-Einstein Condensates}, volume~67 of {\em
  Progress in Nonlinear Differential Equations and Their Applications}.
\newblock Birkh{\"a}user, 2006.

\bibitem{AftJR}
A.~Aftalion, R.L. Jerrard, and J.~Royo-Letelier.
\newblock Non-existence of vortices in the small density region of a
  condensate.
\newblock {\em J. Funct. Anal.}, 260:2387--2406, 2011.

\bibitem{AftRoyo}
A.~Aftalion and J.~Royo-Letelier.
\newblock A minimal interface problem arising from a two component
  {B}ose-{E}instein condensate via ${\Gamma-}$convergence.
\newblock Accepted for publication in Calc. Var. PDE, 2014.

\bibitem{AmbFusPa}
L.~Ambrosio, N~Fusco, and D.~Pallara.
\newblock {\em Functions of Bounded Variation and Free Discontinuity Problems}.
\newblock Oxford Mathematical Monographs. Oxford University Press, 2000.

\bibitem{AmbTort}
L.~Ambrosio and V.~M. Tortorelli.
\newblock On the approximation of free discontinuity problems.
\newblock {\em Boll. Un. Mat. Ital. B (7)}, 6(1):105--123, 1992.

\bibitem{Aochui}
P.~Ao and S.~T. Chui.
\newblock Binary {B}ose-{E}instein condensate mixtures in weakly and strongly
  segregated phases.
\newblock {\em Phys. Rev. A}, 58:4836--4840, 1998.

\bibitem{barankov}
R.~A. Barankov.
\newblock Boundary of two mixed {B}ose-{E}instein condensates.
\newblock {\em Phys. Rev. A}, 66:013612, 2002.

\bibitem{BereLinWeiZhao}
H.~Berestycki, T.C. Lin, J.~Wei, and C.~Zhao.
\newblock On phase-separation model: Asymptotics and qualitative properties.
\newblock {\em Arch. Rational Mech. Anal.}, 208:163--200, 2013.

\bibitem{BereTeraWaWei}
H.~Berestycki, S.~Terracini, K.~Wang, and J.~Wei.
\newblock On entire solutions of an elliptic system modeling phase separations.
\newblock {\em Adv. Math.}, 243:102--126, 2013.

\bibitem{Bou}
G.~Bouchitt{\'e}.
\newblock Singular perturbations of variational problems arising from a
  two-phase transition model.
\newblock {\em Appl. Math. Optim.}, 21(3):289--314, 1990.

\bibitem{braides}
A~Braides.
\newblock {\em Approximation of Free-Discontinuity Problems}, volume 1694 of
  {\em Lecture Notes in Mathematics}.
\newblock Springer Berlin Heidelberg, 1998.

\bibitem{braidesbook}
A.~Braides.
\newblock {\em {$\Gamma$}-convergence for beginners}, volume~22 of {\em Oxford
  Lecture Series in Mathematics and its Applications}.
\newblock Oxford University Press, Oxford, 2002.

\bibitem{CapMelOt}
A.~Capella, C.~Melcher, and F.~Otto.
\newblock Wave-type dynamics in ferromagnetic thin films and the motion of
  {N}{\'e}el walls.
\newblock {\em Nonlinearity}, 20(11):2519--2537, 2007.

\bibitem{CherMur}
M.~Chermisi and C.~Muratov.
\newblock One-dimensional {N}{\'e}el walls under applied magnetic fields.
\newblock {\em Nonlinearity}, 26:2935--2950, 2013.

\bibitem{conterrver1}
M.~Conti, S.~Terracini, and G.~Verzini.
\newblock Asymptotic estimates for the spatial segregation of competitive
  systems.
\newblock {\em Adv. Math.}, 195(2):524--560, 2005.

\bibitem{HaJILA}
D.~S. Hall, M.~R. Matthews, J.~R. Ensher, C.~E. Wieman, and E.~A. Cornell.
\newblock Dynamics of component separation in a binary mixture of
  {B}ose-{E}instein condensates.
\newblock {\em Phys. Rev. Lett.}, 81(8):1539--1542, 1998.

\bibitem{IgnMil}
R.~Ignat and V.~Millot.
\newblock The critical velocity for vortex existence in a two-dimensional
  rotating {B}ose-{E}instein condensate.
\newblock {\em J. Funct. Anal.}, 233:260--306, 2006.

\bibitem{KarSour}
G.~D. Karali and C.~Sourdis.
\newblock The ground state of a {G}ross-{P}itaevskii energy with general
  potential in the {T}homas-{F}ermi limit.
\newblock Accepted for publication in Arch. Rational Mech. Anal., 2014.

\bibitem{KaTsuUe}
K.~Kasamatsu, M.~Tsubota, and M.~Ueda.
\newblock Vortices in multicomponent {B}ose-{E}instein condensates.
\newblock {\em Int. J. Mod. Phys. B}, 19(1835), 2005.

\bibitem{KasYaTsu}
K.~Kasamatsu, Y.~Yasui, and M.~Tsubota.
\newblock Macroscopic quantum tunneling of two-component {B}ose-{E}instein
  condensates.
\newblock {\em Phys. Rev. A}, 64(053605), 2001.

\bibitem{LaMi}
L.~Lassoued and P.~Mironescu.
\newblock {G}inzburg-{L}andau type energy with discontinuous constraint.
\newblock {\em J. Anal. Math.}, 77:1--26, 1999.

\bibitem{liebloss}
E.~H. Lieb and M.~Loss.
\newblock {\em Analysis}, volume~14 of {\em Graduate Studies in Mathematics}.
\newblock American Mathematical Society, Providence, RI, 1997.

\bibitem{MaAf}
P.~Mason and A.~Aftalion.
\newblock Classification of the ground states and topological defects in a
  rotating two-component {B}ose-{E}instein condensate.
\newblock {\em Phys. Rev. A}, 84(3):033611, 2011.

\bibitem{Mazets}
I.~E. Mazets.
\newblock Waves on an interface between two phase-separated {B}ose-{E}instein
  condensates.
\newblock {\em Phys. Rev. A}, 65:033618, 2002.

\bibitem{McCarronETall}
D.~J. McCarron, H.~W. Cho, D.~L. Jenkin, M.~P. K{\"o}ppinger, and S.~L.
  Cornish.
\newblock Dual-species {B}ose-{E}instein condensate of $^{87}\mathrm{Rb}$ and
  $^{133}\mathrm{Cs}$.
\newblock {\em Phys. Rev. A}, 84:011603, 2011.

\bibitem{modica}
L.~Modica.
\newblock The gradient theory of phase transitions and the minimal interface
  criterion.
\newblock {\em Arch. Rational Mech. Anal.}, 98(2):123--142, 1987.

\bibitem{NoTaTeVe}
B.~Noris, H.~Tavares, S.~Terracini, and G.~Verzini.
\newblock Uniform {H}{\"o}lder bounds for nonlinear {S}chr{\"o}dinger systems
  with strong competition.
\newblock {\em Comm. Pure Appl. Math.}, 63(3):267--302, 2010.

\bibitem{OhSten}
P.~{\"O}hberg and S.~Stenholm.
\newblock {Hartree-Fock treatment of the two-component Bose-Einstein
  condensate}.
\newblock {\em Phys. Rev. A}, 57(2):1272--1279, 1998.

\bibitem{PaJILA}
S.~B. Papp, J.~M. Pino, and C.~E. Wieman.
\newblock Tunable miscibility in a dual-species {B}ose-{E}instein condensate.
\newblock {\em Phys. Rev. Lett.}, 101(4):040402, 2008.

\bibitem{JRL}
J.~Royo-Letelier.
\newblock Segregation and symmetry breaking of strongly coupled two-component
  {B}ose-{E}instein condensates in a harmonic trap.
\newblock {\em Calc. Var. Partial Differential Equations}, 49:103--124, 2014.

\bibitem{Timmermans}
E.~Timmermans.
\newblock Phase separation of {B}ose-{E}instein condensates.
\newblock {\em Phys. Rev. Lett.}, 81:5718--5721, 1998.

\bibitem{BVS}
B.~{Van Schaeybroeck}.
\newblock Interface tension of {B}ose-{E}instein condensates.
\newblock {\em Phys. Rev. A}, 78:023624, 2008.

\bibitem{WeWe1}
J.~Wei and T.~Weth.
\newblock Asymptotic behaviour of solutions of planar elliptic systems with
  strong competition.
\newblock {\em Nonlinearity}, 21(2):305--317, 2008.

\end{thebibliography}
\end{document}